\newtheorem{theorem}{Theorem}
\theoremstyle{plain}
\newtheorem{corollary}{Corollary}
\newtheorem{definition}{Definition}
\newtheorem{lemma}{Lemma}
\numberwithin{equation}{section}
\begin{document}

\baselineskip 8mm
\parindent 9mm

\title[]
{Bound state solutions for non-autonomous fractional Schr\"{o}dinger-Poisson equations with critical exponent}

\author{Kexue  Li}

\address{Kexue Li\newline
School of Mathematics and Statistics, Xi'an Jiaotong University, Xi'an 710049, China}
\email{kxli@mail.xjtu.edu.cn}

\thanks{{\it 2010 Mathematics Subjects Classification}: 35Q40, 58E30}
\keywords{Fractional Schr\"{o}dinger-Poisson equation, bound state solution, critical exponent.}

\begin{abstract}
In this paper, we study the  fractional Schr\"{o}dinger-Poisson equation
\begin{equation*}
 \ \left\{\begin{aligned}
&(-\Delta)^{s}u+V(x)u+K(x)\phi u=|u|^{2^{\ast}_{s}-2}u, &\mbox{in} \ \mathbb{R}^{3},\\
&(-\Delta)^{s}\phi=K(x)u^{2},&\mbox{in} \ \mathbb{R}^{3},
\end{aligned}\right.
\end{equation*}
where $s\in (\frac{3}{4},1]$, $2^{\ast}_{s}=\frac{6}{3-2s}$ is the fractional critical exponent, $K\in L^{\frac{6}{6s-3}}(\mathbb{R}^{3})$ and $V\in L^{\frac{3}{2s}}(\mathbb{R}^{3})$ are nonnegative functions. If $\|V\|_{\frac{3}{2s}}+\|K\|_{\frac{6}{6s-3}}$ is sufficiently small,
we prove that the equation has at least one bound state solution.
\end{abstract}
\maketitle

\section{\textbf{Introduction}}
In this paper, we are concerned with the existence of bound state solutions for the following fractional Schr\"{o}dinger-Poisson equation
\begin{equation}\label{fsp}
\ \left\{\begin{aligned}
&(-\Delta)^{s}u+V(x)u+K(x)\phi u=|u|^{2^{\ast-2}_{s}}u, &\mbox{in} \ \mathbb{R}^{3},\\
&(-\Delta)^{s}\phi=K(x)u^{2}, &\mbox{in} \ \mathbb{R}^{3},
\end{aligned}\right.
\end{equation}
where $s\in (\frac{3}{4},1]$, $2^{\ast}_{s}=\frac{6}{3-2s}$ is the fractional critical exponent, $K\in L^{\frac{6}{6s-3}}(\mathbb{R}^{3})$ and $V\in L^{\frac{3}{2s}}(\mathbb{R}^{3})$ are nonnegative functions, $(-\Delta)^{s}$ is the fractional Laplacian defined by
\begin{align}\label{fractionalLaplacian}
(-\triangle)^{s}u(x)=c_{s}P.V.\int_{\mathbb{R}^{3}}\frac{u(x)-u(y)}{|x-y|^{3+2s}}dy,
\end{align}
where
\begin{align*}
c_{s}=2^{2s}\pi^{-\frac{3}{2}}\frac{\Gamma(\frac{3+2s}{2})}{|\Gamma(-s)|}.
\end{align*}

When $s=1$, (\ref{fsp}) is the classical Schr\"{o}dinger-Poisson equation or the more general one
\begin{equation}\label{SP}
\ \left\{\begin{aligned}
&-\Delta u+V(x)u+K(x)\phi u=f(x,u), &\mbox{in} \ \mathbb{R}^{3},\\
&-\Delta \phi=K(x)u^{2}, &\mbox{in} \ \mathbb{R}^{3},
\end{aligned}\right.
\end{equation}
The Schr\"{o}dinger-Poisson system has been introduced in \cite{Benci} as a model describing a quantum particle interacting with a electromagnetic field. In recent years, (\ref{SP}) has attracted much attention,
we refer to \cite{Aprile,Ambrosetti,Jiang,Sun,Cerami,Y,Z,He} and the references therein.

When $V(x)=1$, $K(x)=\lambda$ and $f(x,u)=u^{p}$, Ruiz \cite{Ruiz} studied the problem
\begin{equation*}
\ \left\{\begin{aligned}
&-\Delta u+u+\lambda\phi u=u^{p}, \\
&-\Delta \phi=u^{2}, \lim_{|x|\rightarrow +\infty}\phi(x)=0,
\end{aligned}\right.
\end{equation*}
where $u,\phi:\mathbb{R}^{3}\rightarrow \mathbb{R}$ are positive radial functions, $\lambda>0$ and $1<p<5$. The existence and nonexistence are established with different parameters $p$ and $\lambda$. In the case that $p\in (2,3)$, Ruiz developed a new approach consists of minimizing the associated energy functional on a certain manifold which is a combination of Nehari manifold and Pohozaev equality. In the case that $p\in (1,2)$ and $\lambda$ is small enough, the existence results are obtained. Liu and Guo \cite{Liu} considered the following Schr\"{o}dinger-Poisson equation with critical growth
\begin{equation*}
\ \left\{\begin{aligned}
&-\Delta u+V(x)u+\lambda\phi u=\mu|u|^{q-1}u+u^{5}, \\
&-\Delta \phi=u^{2},
\end{aligned}\right.
\end{equation*}
where $q\in (2,5)$, $\lambda>0$. Under some assumptions on the potential $V$, they used the variational methods to prove the existence of positive ground state solutions. In \cite{ZMX}, the authors considered (\ref{SP}) with $f(u)=u^{5}$ and applied a linking theorem to prove the existence of bound state solutions when $V(x)$, $K(x)$ satisfy some conditions. If $V(x)=1$, $f(x,u)=a(x)|u|^{p-2}+u^{5}$, $p\in (4,6)$, when $K(x)$ and $a(x)$ satisfy some assumptions, Zhang \cite{ZhangJian}  studied the existence of ground state solutions and nodal solutions for (\ref{SP}).

If $\phi=0$, (\ref{fsp}) reduces to a fractional Schr\"{o}dinger equation, which is a fundmental equation in fractional quantum mechanics \cite{Laskin1,Laskin2}. In fact, if one extends the Feynman path integral from the Brownian-like to L\'{e}vy-like quantum mechanical paths, the classical Schr\"{o}dinger equation will change into
the fractional Schr\"{o}dinger equation. In the last decade, the existence of solutions for fractional Schr\"{o}dinger equations has been investigated by many authors, we refer to \cite{Ionescu,Lemm,Ambrosio,Bieganowski,HZ,Ao,Secchi}.

To the best of our knowledge, there are few papers which considered the equation (\ref{fsp}). In \cite{Zhang}, the authors considered a fractional Schr\"{o}dinger-Poisson system
\begin{equation*}
\ \left\{\begin{aligned}
&(-\Delta)^{s}u+\lambda\phi u=g(u), &\mbox{in} \ \mathbb{R}^{3},\\
&(-\Delta)^{t}\phi=\lambda u^{2},&\mbox{in} \ \mathbb{R}^{3},
\end{aligned}\right.
\end{equation*}
where $g(u)$ satisfies the Berestucki-Lions conditions, the existence of positive solutions is proved.
Teng \cite{T} studied the existence of ground state solutions for the nonlinear fractional Schr\"{o}dinger-Poisson equation with critical Sobolev exponent
\begin{equation*}
\ \left\{\begin{aligned}
&(-\Delta)^{s}u+V(x)u+\phi u=\mu|u|^{p-1}+|u|^{2_{s}^{\ast}-2}u, &\mbox{in} \ \mathbb{R}^{3},\\
&(-\Delta)^{t}\phi=u^{2},&\mbox{in} \ \mathbb{R}^{3},
\end{aligned}\right.
\end{equation*}
where $\mu>0$ is a parameter, $1<p<2_{s}^{\ast}-1=\frac{3+2s}{3-2s}$, $s,t\in (0,1)$ and $2s+2t>3$, the author prove the existence of a nontrivial ground state solutions by using the method of
Nehari-Pohozaev manifold and the arguments of Brezis-Nirenberg, the monotonic trick and global compactness Lemma. Shen and Yao \cite{Shen} applied Nehari-Pohozaev type
manifold to prove the existence of a nontrivial least energy solution for the nonlinear fractional Schr\"{o}dinger-Poisson system
\begin{equation*}
\ \left\{\begin{aligned}
&(-\Delta)^{s}u+V(x)u+\phi u=\mu|u|^{p-1}, &\mbox{in} \ \mathbb{R}^{3},\\
&(-\Delta)^{t}\phi=u^{2},&\mbox{in} \ \mathbb{R}^{3},
\end{aligned}\right.
\end{equation*}
where $s,t\in (0,1)$, $s<t$ and $2s+2t>3$, $2<p<\frac{3+2s}{3-2s}$. In \cite{Yu}, the authors studied the fractional Schr\"{o}dinger-Poisson system
\begin{equation*}
\ \left\{\begin{aligned}
&\varepsilon^{2s}(-\Delta)^{s}u+V(x)u+\phi u=K(x)|u|^{p-2}u, &\mbox{in} \ \mathbb{R}^{3},\\
&\varepsilon^{2s}(-\Delta)^{s}\phi=u^{2},&\mbox{in} \ \mathbb{R}^{3},
\end{aligned}\right.
\end{equation*}
where $\varepsilon>0$, $\frac{3}{4}<s<1$, $4<p<\frac{6}{3-2s}$, $V(x)\in C(\mathbb{R}^{3})\cap L^{\infty}(\mathbb{R}^{3})$ is has positive global minimim, $K(x)\in C(\mathbb{R}^{3})\cap L^{\infty}(\mathbb{R}^{3})$ is positive and has global maximum. They proved the existence of positive ground solution and determined a concrete set related to the
potentials $V$ and $K$ as the concentration position of these ground state solutions as $\varepsilon\rightarrow 0$.

In this paper, we are devoted to establishing the existence of bounded state solutions for the fractional Schr\"{o}dinger-Poisson equation (\ref{fsp}). We consider the nonlinear term $f(x,u)=|u|^{2_{s}^{\ast}-2}$ without the
subcritical part and we deal with the problem (\ref{fsp}) in $D^{s,2}(\mathbb{R}^{3})$ instead of in $H^{s}(\mathbb{R}^{3})$. Under some assumptions on the potentials $V$ and $K$, we prove that (\ref{fsp}) can not be solved by
constrained minimization on the Nehari manifold. After we prove some concentration-compactness results, the existence of a bounded state solution is obtained by linking theorem.

Our main result is as folows:
\begin{theorem}\label{boundstate}
Assume that  $V\geq 0$, $V\in L^{\frac{3}{2s}}(\mathbb{R}^{3})$, $K\geq 0$, $K\in L^{\frac{6}{6s-3}}(\mathbb{R}^{3})$ and
\begin{align}\label{svk}
0<S_{s}^{-1}\|V\|_{\frac{3}{2s}}+S_{s}^{\frac{3}{2s}-3}\|K\|_{\frac{6}{6s-3}}^{2}\leq 2^{\frac{2_{s}^{\ast}-4}{2_{s}\ast}}-1,
\end{align}
where
\begin{align*}
S_{s}=\inf_{u\in D^{s,2}(\mathbb{R}^{3})\backslash\{0\}}\frac{\int_{\mathbb{R}^{3}}|(-\Delta)^{\frac{s}{2}}u|^{2}dx}{(\int_{\mathbb{R}^{3}}|u|^{2_{s}^{\ast}})^{\frac{2_{s}^{\ast}}{2}}}
\end{align*}
is the best Sobolev constant for the embedding $D^{s,2}(\mathbb{R}^{3})\rightarrow L^{2_{s}^{\ast}}(\mathbb{R}^{3})$,
then the problem (\ref{fsp}) has at least one bound state solution.
\end{theorem}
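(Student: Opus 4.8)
The plan is to eliminate $\phi$, analyse the geometry and the failure of compactness of the resulting one--variable functional, and then run a linking argument at an energy level that hypothesis (\ref{svk}) keeps strictly inside the range in which a Palais--Smale sequence still produces a nontrivial critical point.

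\emph{Reduction to a single functional.} For $u\in D^{s,2}(\mathbb{R}^{3})$ the equation $(-\Delta)^{s}\phi=K(x)u^{2}$ has a unique solution $\phi_{u}\in D^{s,2}(\mathbb{R}^{3})$, given by convolution with the fractional kernel $|x|^{-(3-2s)}$, with $\phi_{u}\ge0$ and $\phi_{tu}=t^{2}\phi_{u}$; by H\"{o}lder and the Sobolev embedding (i.e.\ the Hardy--Littlewood--Sobolev inequality), since $K\in L^{6/(6s-3)}$,
\[
0\le\int_{\mathbb{R}^{3}}K\phi_{u}u^{2}\le C\|K\|_{6/(6s-3)}^{2}\|u\|_{2_{s}^{\ast}}^{4},
\]
and $u\mapsto\phi_{u}$ is continuous. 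Hence
\[
I(u)=\frac12\int_{\mathbb{R}^{3}}|(-\Delta)^{s/2}u|^{2}+\frac12\int_{\mathbb{R}^{3}}Vu^{2}+\frac14\int_{\mathbb{R}^{3}}K\phi_{u}u^{2}-\frac{1}{2_{s}^{\ast}}\int_{\mathbb{R}^{3}}|u|^{2_{s}^{\ast}}
\]
is of class $C^{1}$ on $D^{s,2}(\mathbb{R}^{3})$, and $u$ is a critical point of $I$ if and only if $(u,\phi_{u})$ solves (\ref{fsp}). Writing $\|\cdot\|$ for the $D^{s,2}(\mathbb{R}^{3})$ norm, $\int Vu^{2}\le S_{s}^{-1}\|V\|_{3/(2s)}\|u\|^{2}$ with $\|V\|_{3/(2s)}$ small, so $I$ has the mountain--pass geometry: $I\ge\alpha>0$ on a small sphere of $D^{s,2}(\mathbb{R}^{3})$ and $I(tu)\to-\infty$ as $t\to\infty$ for every $u$ with $\int|u|^{2_{s}^{\ast}}>0$.

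\emph{No ground state, and the structure of Palais--Smale sequences.} Because $V\ge0$ and $K\ge0$, $I\ge I_{0}$ with $I_{0}$ the potential--free functional, so the mountain--pass level (equivalently the Nehari minimum) of $I$ is $\ge c^{\ast}:=\frac{s}{3}S_{s}^{3/(2s)}$; testing with rescaled extremals of $S_{s}$ shows it equals $c^{\ast}$ and is not attained. Moreover, testing $I'(u)=0$ against $u$ and using the definition of $S_{s}$ gives $\|u\|^{2}\ge S_{s}^{3/(2s)}$ and $I(u)=\frac{s}{3}\bigl(\|u\|^{2}+\int Vu^{2}\bigr)+\frac{4s-3}{12}\int K\phi_{u}u^{2}\ge c^{\ast}$ for \emph{every} nontrivial critical point (here $4s-3>0$ since $s>\frac34$). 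Thus (\ref{fsp}) has no least--energy solution and cannot be obtained by constrained minimisation on the Nehari manifold; a strictly higher critical value must be found. The engine is a global--compactness lemma in the spirit of Struwe, adapted to the nonlocal, Schr\"{o}dinger--Poisson setting: if $I(u_{n})\to c$ and $I'(u_{n})\to0$, then, up to a subsequence, $u_{n}=u+\sum_{j=1}^{k}U_{n}^{j}+o(1)$ in $D^{s,2}(\mathbb{R}^{3})$, where $u$ is a critical point of $I$, each $U_{n}^{j}$ is a translated, rescaled extremal of $S_{s}$, and $c=I(u)+kc^{\ast}$ with $k\in\mathbb{N}\cup\{0\}$. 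Its proof combines the concentration--compactness principle applied to $|(-\Delta)^{s/2}u_{n}|^{2}$ and $|u_{n}|^{2_{s}^{\ast}}$, the Brezis--Lieb lemma, the classification of entire solutions of $(-\Delta)^{s}w=|w|^{2_{s}^{\ast}-2}w$, and the fact that at the scale of each bubble the lower order terms $\int Vu_{n}^{2}$, $\int K\phi_{u_{n}}u_{n}^{2}$ carry no mass and hence converge to $\int Vu^{2}$, $\int K\phi_{u}u^{2}$. Corollary: if $c\in(c^{\ast},2c^{\ast})$ then $k\le1$ and $I(u)=c-kc^{\ast}>0$, so $u\neq0$; hence $I$ has a nontrivial critical point at every such level.

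\emph{Linking and the level estimate.} It remains to produce a min--max value $c$ with $c^{\ast}<c<2c^{\ast}$. I would fix a small $\varepsilon$ and a point $y$, let $U_{\varepsilon}=U_{\varepsilon,y}$ be the corresponding extremal of $S_{s}$, and build a finite--dimensional ``box'' $Q$ out of $U_{\varepsilon}$ (together, if needed, with a second widely separated or differently scaled bubble), with $0$ on part of $\partial Q$ and large radii elsewhere, so that $\partial Q$ links the small mountain--pass sphere; the linking theorem then delivers a Palais--Smale sequence at $c=\inf_{h}\sup_{Q}I(h)$. The upper bound uses $\max_{t\ge0}I(tU_{\varepsilon})=c^{\ast}\bigl(1+O(\int VU_{\varepsilon}^{2})+O(\int K\phi_{U_{\varepsilon}}U_{\varepsilon}^{2})\bigr)$ together with $\int VU_{\varepsilon}^{2}\le S_{s}^{-1}\|V\|_{3/(2s)}\|U_{\varepsilon}\|^{2}$, $\int K\phi_{U_{\varepsilon}}U_{\varepsilon}^{2}\le CS_{s}^{3/(2s)-3}\|K\|_{6/(6s-3)}^{2}\|U_{\varepsilon}\|^{2}$ and (\ref{svk}) to get $\sup_{Q}I\le 2^{(2_{s}^{\ast}-4)/2_{s}^{\ast}}c^{\ast}<2c^{\ast}$. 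The lower bound $c>c^{\ast}$ rests on $(V,K)\not\equiv(0,0)$ — forced by the strict left inequality in (\ref{svk}) — which makes $\max_{t}I(tU_{\varepsilon})>c^{\ast}$, and on a degree/intersection argument showing that no admissible deformation can pull $\sup_{Q}I$ down to $c^{\ast}$. With $c\in(c^{\ast},2c^{\ast})$, the previous paragraph yields a nontrivial critical point $u$ of $I$, and $(u,\phi_{u})$ is the sought bound state. I expect the main obstacles to be the global--compactness lemma in this nonlocal critical regime — in particular controlling the interplay of the nonlocal Poisson term with the bubbles and handling $V$ merely in $L^{3/(2s)}$ rather than bounded — and the simultaneous estimates $c^{\ast}<c<2c^{\ast}$, where the sharp constant $2^{(2_{s}^{\ast}-4)/2_{s}^{\ast}}-1$ of (\ref{svk}) is precisely what keeps the min--max level below the second bubbling threshold $2c^{\ast}$.
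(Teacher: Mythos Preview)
Your overall architecture---reduce to a single functional, establish a Struwe-type global compactness so that Palais--Smale sequences at levels in $(c^{\ast},2c^{\ast})$ with $c^{\ast}=\frac{s}{3}S_{s}^{3/(2s)}$ yield nontrivial critical points, then produce a min--max value in that window---matches the paper. The genuine gap is in the linking step, specifically the lower bound $c>c^{\ast}$.

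Linking $\partial Q$ with ``the small mountain--pass sphere'' will not do: the paper proves (Lemma~\ref{minimization}) that the mountain--pass/Nehari level of $I$ is \emph{exactly} $m=m_{\infty}=c^{\ast}$ and is not attained. So any min--max scheme whose lower barrier is a sublevel of the Nehari map recovers $c^{\ast}$, not something strictly above it; your observation that $\max_{t}I(tU_{\varepsilon})>c^{\ast}$ for each fixed bubble does not prevent the infimum over deformations from collapsing to $c^{\ast}$ by sending the bubble to infinity or to scale $0$. The paper's essential device is a barycenter-type map
\[
\alpha(u)=(\beta(u),\gamma(u))=\frac{1}{S_{s}^{3/(2s)}}\int_{\mathbb{R}^{3}}\Bigl(\tfrac{x}{|x|},\,\sigma(x)\Bigr)\,|(-\Delta)^{s/2}u|^{2}\,dx
\]
and the constrained set $\mathcal{M}=\{u\in\mathcal{N}:\alpha(u)=(0,\tfrac12)\}$. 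One first shows $c_{\mathcal{M}}:=\inf_{\mathcal{M}}I>c^{\ast}$: a minimising sequence at level $c^{\ast}$ must, by the bubble decomposition, be $\psi_{\delta_{n},y_{n}}+o(1)$, and a case analysis on the possible behaviours of $(\delta_{n},y_{n})$ shows each is incompatible with $\alpha=(0,\tfrac12)$. Then $Q=\theta\circ T(D)$ with $D=\{|y|\le R,\ \delta_{1}\le\delta\le\delta_{2}\}$ is shown to link $\mathcal{M}$ via a topological degree computation on $\mathbb{R}^{3}\times\mathbb{R}^{+}$ (homotopy of $\alpha\circ\theta\circ T$ to the identity on $\partial D$), giving $d\ge c_{\mathcal{M}}>c^{\ast}$. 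Your proposal gestures at ``a degree/intersection argument'' but does not supply the barycenter map that makes it work; without it there is no mechanism preventing the min--max value from sliding down to $c^{\ast}$.

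A smaller point in your compactness corollary: from $c\in(c^{\ast},2c^{\ast})$ you should conclude $k=0$ (hence strong convergence), not merely $k\le1$. Indeed, any nontrivial critical point of $I$ lies on $\mathcal{N}$ and hence has energy $>c^{\ast}$ (again Lemma~\ref{minimization}); so $k=1$ would force $I(\bar u)=c-c^{\ast}\in(0,c^{\ast})$, which is impossible unless $\bar u=0$, and then $c=c^{\ast}$, a contradiction. The paper also uses that sign-changing solutions of the limiting problem have energy $\ge 2c^{\ast}$ to rule them out as bubbles in this window.
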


This paper is organized as follows. In Section 2, we first present the variational setting of the problem, then we prove some lemmas. In Section 3, we study the Palais-Smale sequence and get a compactness theorem. In Section 4,
we prove the existence of bounded state solutions by linking theorem.

\section{\textbf{Variational setting and preliminaries}}
For $p\in [1,\infty)$, we denote by $L^{p}(\mathbb{R}^{3})$ the usual Lebesgue space with the norm $\|u\|_{p}=\left(\int_{\mathbb{R}^{3}}|u(x)|^{p}dx\right)^{\frac{1}{p}}$. For any $r>0$ and for any $x\in \mathbb{R}^{3}$,
$B_{r}(x)$ denotes the ball of radius $r$ centered at $x$.
For any $s\in (0,1)$, we recall some definitions of fractional Sobolev spaces $H^{s}(\mathbb{R}^{3})$ and the fractional Laplacian $(-\Delta)^{s}$, for more details, we refer to \cite{DPV}.  $H^{s}(\mathbb{R}^{3})$ is defined as follows
\begin{align*}
H^{s}(\mathbb{R}^{3})=\left\{u\in L^{2}(\mathbb{R}^{3}): \int_{\mathbb{R}^{3}}(1+|\xi|^{2s})|\mathcal{F}u(\xi)|^{2}d\xi<\infty\right\}
\end{align*}
with the norm
\begin{align}\label{alternative}
\|u\|_{H^{s}}=\left(\int_{\mathbb{R}^{3}}(|\mathcal{F}u(\xi)|^{2}+|\xi|^{2s}|\mathcal{F}u(\xi)|^{2})d\xi\right)^{\frac{1}{2}},
\end{align}
where $\mathcal{F}u$ denotes the Fourier transform of $u$. As usual $H^{-s}(\mathbb{R}^{3})$ denotes the dual of $H^{s}(\mathbb{R}^{3})$.
By $\mathcal{S}(\mathbb{R}^{n})$, we denote the Schwartz space of rapidly decaying $C^{\infty}$ functions in $\mathbb{R}^{n}$. For $u\in \mathcal{S}(\mathbb{R}^{n})$ and $s\in (0,1)$, $(-\Delta)^{s}$ is defined by
\begin{align}\label{Fourier}
(-\Delta)^{s}f=\mathcal{F}^{-1}(|\xi|^{2s}(\mathcal{F}f)), \ \forall \xi\in \mathbb{R}^{n}.
\end{align}
In fact, (\ref{Fourier}) is equivalent to (\ref{fractionalLaplacian}), see \cite{Tzirakis}. By Plancherel's theorem, we have $\|\mathcal{F}u\|_{2}=\|u\|_{2}$, $\||\xi|^{s}\mathcal{F}u\|_{2}=\|(-\Delta)^{\frac{s}{2}}u\|$. Then by (\ref{alternative}), we get the equivalent norm
\begin{align*}
\|u\|_{H^{s}}=\left(\int_{\mathbb{R}^{3}}(|(-\Delta)^{\frac{s}{2}}u|^{2}+|u|^{2})dx\right)^{\frac{1}{2}}.
\end{align*}
 From Theorem 6.5 and Corollary 7.2 in \cite{DPV}, it is known that the space $H^{s}(\mathbb{R}^{3})$  is continuously embedded in $L^{q}(\mathbb{R}^{3})$ for any $q\in [2,2^{\ast}_{s}]$
 and the embedding $H^{s}(\mathbb{R}^{3})\hookrightarrow L^{q}(\mathbb{R}^{3})$ is locally compact for $q\in [1, 2^{\ast}_{s})$, where $2^{\ast}_{s}=\frac{6}{3-2s}$ is the fractional critical Sobolev exponent. \\
For $s\in (0,1)$, the fractional Sobolev space $D^{s,2}(\mathbb{R}^{3})$ is defined as follows
\begin{align*}
D^{s,2}(\mathbb{R}^{3})=\left\{u\in L^{2^{\ast}_{s}}(\mathbb{R}^{3}): |\xi|^{s}\mathcal{F}u(\xi)\in L^{2}(\mathbb{R}^{3})\right\},
\end{align*}
which is the completion of $C^{\infty}_{0}(\mathbb{R}^{3})$ with respect to the norm
\begin{align*}
\|u\|_{D^{s,2}}=\left(\int_{\mathbb{R}^{3}}|(-\Delta)^{\frac{s}{2}}u|^{2}dx\right)^{\frac{1}{2}}=\left(\int_{\mathbb{R}^{3}}|\xi|^{2s}|\mathcal{F}u(\xi)|^{2}d\xi\right)^{\frac{1}{2}}.
\end{align*}
The best Sobolev constant is defined by
\begin{align}\label{minimizer}
S_{s}=\inf_{u\in D^{s,2}(\mathbb{R}^{3})\backslash \{0\}}\frac{\int_{\mathbb{R}^{3}}|(-\Delta)^{\frac{s}{2}}u|^{2}dx}{\left(\int_{\mathbb{R}^{3}}|u|^{2^{\ast}_{s}}dx\right)^{\frac{2}{2^{\ast}_{s}}}}.
\end{align}
By $C$, we denote the generic constants, which may change from line to line. We consider the variational setting of (\ref{fsp}).
For $u\in D^{s,2}(\mathbb{R}^{3})$, define the linear operator $T_{u}: D^{s,2}(\mathbb{R}^{3})\rightarrow \mathbb{R}$ as
\begin{align*}
T_{u}(v)=\int_{\mathbb{R}^{3}}K(x)u^{2}vdx.
\end{align*}
For $K\in L^{\frac{6}{6s-3}}(\mathbb{R}^{3})$, by the H\"{o}lder inequality, we have
\begin{align}\label{dt2}
|T_{u}(v)|\leq \|K\|_{\frac{6}{6s-3}}\|u\|^{2}_{\frac{6}{3-2s}}\|v\|_{2^{\ast}_{s}}\leq \|K\|_{\frac{6}{6s-3}}S^{-\frac{3}{2}}_{s}\|u\|_{D^{s,2}}^{2}\|v\|_{D^{s,2}}.
\end{align}

Set
\begin{align*}
\eta(a,b)=\int_{\mathbb{R}^{3}}(-\Delta)^{\frac{s}{2}}a\cdot (-\Delta)^{\frac{s}{2}}b\ dx, \ a,b\in D^{s,2}(\mathbb{R}^{3}).
\end{align*}
It is clear that $\eta(a,b)$ is bilinear, bounded and coercive. The Lax-Milgram theorem implies that for every $u\in D^{s,2}(\mathbb{R}^{3})$, there exists a unique $\phi^{s}_{u}\in D^{s,2}(\mathbb{R}^{3})$ such that $T_{u}(v)=\eta(\phi^{s}_{u},v)$ for any $v\in D^{s,2}(\mathbb{R}^{3})$, that is
\begin{align}\label{dt3}
\int_{\mathbb{R}^{3}}(-\Delta)^{\frac{s}{2}}\phi^{s}_{u}(-\Delta)^{\frac{s}{2}}vdx=\int_{\mathbb{R}^{3}}K(x)u^{2}vdx.
\end{align}
Therefore, $(-\Delta)^{s}\phi^{s}_{u}=K(x)u^{2}$ in a weak sense.
For $x\in \mathbb{R}^{3}$, we have
\begin{align}\label{Riesz}
\phi^{s}_{u}(x)=c_{s}\int_{\mathbb{R}^{3}}\frac{K(y)u^{2}(y)}{|x-y|^{3-2s}}dy,
\end{align}
which is the Riesz potential \cite{Stein}, where
\begin{align*}
c_{s}=\frac{\Gamma(\frac{3-2s}{2})}{\pi^{3/2}2^{2s}\Gamma(s)}.
\end{align*}
Moreover,
\begin{align}\label{phisu}
\|\phi^{s}_{u}\|_{D^{s,2}}=\|T_{u}\|_{\mathcal{L}(D^{s,2},\mathbb{R})}.
\end{align}
Since $K\in L^{\frac{6}{6s-3}}(\mathbb{R}^{3})$, by (\ref{dt3}), (\ref{dt2}),
\begin{align}\label{phi2}
\|\phi^{s}_{u}\|^{2}_{D^{s,2}}&=\int_{\mathbb{R}^{3}}K(x)\phi^{s}_{u}u^{2}dx\leq \|K\|_{\frac{6}{6s-3}}\|u\|^{2}_{\frac{6}{3-2s}}\|\phi^{s}_{u}\|_{2^{\ast}_{s}}\nonumber\\
&\leq S^{-\frac{1}{2}}_{s}\|K\|_{\frac{6}{6s-3}}\|u\|^{2}_{\frac{6}{3-2s}}\|\phi^{s}_{u}\|_{D^{s,2}}.
\end{align}
Then
\begin{align}\label{ph}
\|\phi^{s}_{u}\|_{D^{s,2}}\leq S^{-\frac{1}{2}}_{s}\|K\|_{\frac{6}{6s-3}}\|u\|^{2}_{\frac{6}{3-2s}}\leq CS^{-\frac{1}{2}}_{s}\|K\|_{\frac{6}{6s-3}}\|u\|_{H^{s}}.
\end{align}
Substituting $\phi^{s}_{u}$ in (\ref{fsp}), we have the fractional Schr$\ddot{\mbox{o}}$dinger equation
\begin{align}\label{fractional Schrodinger}
(-\Delta)^{s}u+V(x)u+K(x)\phi^{s}_{u}u=|u|^{2^{\ast}_{s}-2}u, \ x\in \mathbb{R}^{3},
\end{align}
The energy functional $I: D^{s,2}(\mathbb{R}^{3})\rightarrow \mathbb{R}$ corresponding to problem (\ref{fractional Schrodinger}) is defined by
\begin{align}\label{corresponding}
I(u)=\frac{1}{2}\int_{\mathbb{R}^{3}}(|(-\Delta)^{\frac{s}{2}}u|^{2}+V(x)u^{2})dx+\frac{1}{4}\int_{\mathbb{R}^{3}}K(x)\phi^{s}_{u}u^{2}dx-\frac{1}{2^{\ast}_{s}}\int_{\mathbb{R}^{3}}|u|^{2^{\ast}_{s}}dx.
\end{align}
It is easy to see that $I$ is well defined in $D^{s,2}(\mathbb{R}^{3})$ and $I\in C^{1}(D^{s,2}(\mathbb{R}^{3}),\mathbb{R})$, and
\begin{align*}
\langle I'(u),v\rangle=\int_{\mathbb{R}^{3}}\left((-\Delta)^{\frac{s}{2}}u(-\Delta)^{\frac{s}{2}}v+V(x)uv+K(x)\phi^{s}_{u}uv-|u|^{2^{\ast}_{s}-2}uv\right)dx,
\end{align*}
for any $v\in D^{s,2}(\mathbb{R}^{3})$.
\begin{definition}
(1) We call $(u,\phi)\in D^{s,2}(\mathbb{R}^{3})\times D^{s,2}(\mathbb{R}^{3})$ is a weak solution of (\ref{fsp}) if $u$ is a weak solution of (\ref{fractional Schrodinger}).\\
(2)We call  $u$ is a weak solution of (\ref{fractional Schrodinger}) if
\begin{align*}
\int_{\mathbb{R}^{3}}\left((-\Delta)^{\frac{s}{2}}u(-\Delta)^{\frac{s}{2}}v+V(x)uv+K(x)\phi^{s}_{u}uv-|u|^{2^{\ast}_{s}-2}uv\right)dx=0,
\end{align*}
for any $v\in D^{s,2}(\mathbb{R}^{3})$.
\end{definition}

Let $\Phi: D^{s,2}(\mathbb{R}^{3})\rightarrow D^{s,2}(\mathbb{R}^{3})$ be the operator
\begin{align*}
\Phi(u)=\phi^{s}_{u},
\end{align*}
and set
\begin{align*}
N(u)=\int_{\mathbb{R}^{3}}K(x)\phi^{s}_{u}u^{2}dx.
\end{align*}
\begin{lemma}\label{Hardy}
Let $u_{n}\in D^{s,2}(\mathbb{R}^{3})$ be such that $u_{n}\rightharpoonup 0$ in $D^{s,2}(\mathbb{R}^{3})$. Then, up to subsequences, $u_{n}\rightarrow 0$ in $L_{loc}^{p}(\mathbb{R}^{3})$ for any $p\in [2,2_{s}^{\ast})$.
\end{lemma}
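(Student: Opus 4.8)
The plan is to localize the problem to balls and invoke the compact fractional Sobolev embedding on bounded domains. First I would record that the weak convergence $u_{n}\rightharpoonup 0$ in $D^{s,2}(\mathbb{R}^{3})$ forces $\sup_{n}\|u_{n}\|_{D^{s,2}}<\infty$, and since the embedding $D^{s,2}(\mathbb{R}^{3})\hookrightarrow L^{2^{\ast}_{s}}(\mathbb{R}^{3})$ is continuous, also $\sup_{n}\|u_{n}\|_{2^{\ast}_{s}}<\infty$; moreover, being linear and continuous, this embedding is weak-to-weak continuous, so $u_{n}\rightharpoonup 0$ in $L^{2^{\ast}_{s}}(\mathbb{R}^{3})$ as well.

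Fix $R>0$. The key point is that $(u_{n})$ is bounded in $H^{s}(B_{R})=W^{s,2}(B_{R})$. Indeed, by H\"{o}lder's inequality on the bounded set $B_{R}$,
\begin{equation*}
\|u_{n}\|_{L^{2}(B_{R})}\leq |B_{R}|^{\frac{1}{2}-\frac{1}{2^{\ast}_{s}}}\|u_{n}\|_{L^{2^{\ast}_{s}}(B_{R})}\leq C_{R},
\end{equation*}
while the Gagliardo seminorm of $u_{n}$ over $B_{R}\times B_{R}$ is dominated by the one over $\mathbb{R}^{3}\times\mathbb{R}^{3}$, which equals $\|(-\Delta)^{\frac{s}{2}}u_{n}\|_{2}^{2}=\|u_{n}\|_{D^{s,2}}^{2}$ up to a constant depending only on $s$; see \cite{DPV}. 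Hence $\sup_{n}\|u_{n}\|_{H^{s}(B_{R})}<\infty$. Since $B_{R}$ is a bounded domain with smooth boundary, the embedding $H^{s}(B_{R})\hookrightarrow L^{p}(B_{R})$ is compact for every $p\in[1,2^{\ast}_{s})$ (Theorem 7.1 and Corollary 7.2 in \cite{DPV}). Thus, up to a subsequence, $u_{n}\to w_{R}$ strongly in $L^{p}(B_{R})$ for all $p\in[2,2^{\ast}_{s})$, and a.e.\ on $B_{R}$.

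It remains to identify $w_{R}=0$. For any $\chi\in C^{\infty}_{0}(B_{R})$ we have $\chi\in L^{(2^{\ast}_{s})'}(\mathbb{R}^{3})$, so from $u_{n}\rightharpoonup 0$ in $L^{2^{\ast}_{s}}(\mathbb{R}^{3})$ together with the strong $L^{1}(B_{R})$ convergence $u_{n}\to w_{R}$ we get
\begin{equation*}
\int_{B_{R}}w_{R}\,\chi\,dx=\lim_{n\to\infty}\int_{B_{R}}u_{n}\chi\,dx=0,
\end{equation*}
whence $w_{R}\equiv 0$. Finally, a standard diagonal argument over the exhausting sequence of balls $B_{1}\subset B_{2}\subset\cdots$ produces a single subsequence of $(u_{n})$ along which $u_{n}\to 0$ in $L^{p}(B_{R})$ for every $R\in\mathbb{N}$ and every $p\in[2,2^{\ast}_{s})$, i.e.\ $u_{n}\to 0$ in $L^{p}_{loc}(\mathbb{R}^{3})$ for all $p\in[2,2^{\ast}_{s})$.

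The only genuinely delicate point is the uniform bound on $(u_{n})$ in $H^{s}(B_{R})$ combined with the correct statement of the compact fractional Sobolev embedding on a bounded domain; once these are in place everything else is soft functional analysis. One could alternatively avoid even the $L^{p}$-version of the embedding by proving only $L^{2}_{loc}$ convergence and then interpolating $\|u_{n}\|_{L^{p}(B_{R})}\leq\|u_{n}\|_{L^{2}(B_{R})}^{1-\theta}\|u_{n}\|_{L^{2^{\ast}_{s}}(B_{R})}^{\theta}$ for a suitable $\theta\in(0,1)$ when $p\in(2,2^{\ast}_{s})$, using the uniform bound on the last factor.
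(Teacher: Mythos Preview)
Your argument is correct and follows the same overall skeleton as the paper's proof: show that $(u_{n})$ is bounded in $H^{s}(B_{R})$, invoke the compact fractional Sobolev embedding on bounded domains to extract a strongly convergent subsequence, identify the limit as $0$, and finish (in the paper's case by interpolation between $L^{2}$ and $L^{2^{\ast}_{s}}$, which you mention as an alternative at the end).

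The one genuine methodological difference is how the local $L^{2}$ bound is obtained. The paper uses the fractional Hardy inequality
\[
\int_{\mathbb{R}^{3}}\frac{u_{n}^{2}}{(1+|x|)^{2s}}\,dx\leq c_{s}\int_{\mathbb{R}^{3}}|(-\Delta)^{\frac{s}{2}}u_{n}|^{2}\,dx
\]
and then observes that $\int_{B_{R}}u_{n}^{2}\,dx\leq(1+R)^{2s}\int_{B_{R}}\frac{u_{n}^{2}}{(1+|x|)^{2s}}\,dx$. You instead take the more direct route: bound $\|u_{n}\|_{L^{2}(B_{R})}$ by H\"{o}lder against the uniform $L^{2^{\ast}_{s}}$ bound. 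Your approach is more elementary and self-contained, avoiding any appeal to Hardy-type inequalities; the paper's use of Hardy is not wrong but is heavier than necessary here. Both reach the same point, namely $\sup_{n}\|u_{n}\|_{H^{s}(B_{R})}<\infty$, after which the arguments converge.
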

\begin{proof}
By the fractional Hardy inequality (see (1.1) in \cite{Yafaev}),
\begin{align*}
\int_{\mathbb{R}^{3}}\frac{u_{n}^{2}}{(1+|x|)^{2s}}dx\leq c_{s}\int_{\mathbb{R}^{3}}|(-\Delta)^{\frac{s}{2}}u_{n}|^{2}dx\leq C.
\end{align*}
Let $r>0$ and let $A\subset \mathbb{R}^{3}$ be such that $A\subset B_{r}(0)$. We have
\begin{align*}
\int_{A}u_{n}^{2}dx\leq (1+r)^{2s}\int_{B_{r}(0)}\frac{u_{n}^{2}}{(1+|x|)^{2s}}dx\leq \widetilde{C}.
\end{align*}
this together with $u_{n}\in D^{s,2}(\mathbb{R}^{3})$ yield that $u_{n}$ is bounded in $H^{s}(A)$. Since $u_{n}\rightharpoonup 0$ in $D^{s,2}(\mathbb{R}^{3})$, then up to a subsequence, $u_{n}\rightarrow 0$ in $L^{2}(A)$.
By $L^{p}$ interpolation inequality, for all $p\in [2,2_{s}^{\ast})$,
\begin{align*}
\left(\int_{A}|u_{n}|^{p}dx\right)^{\frac{1}{p}}\leq \left(\int_{A}|u_{n}|^{2}dx\right)^{\frac{\alpha}{2}}\left(\int_{A}|u_{n}|^{2_{s}^{\ast}}dx\right)^{\frac{1-\alpha}{2_{s}^{\ast}}},
\end{align*}
where $\frac{\alpha}{2}+\frac{1-\alpha}{2_{s}^{\ast}}=\frac{1}{p}$. Then the conclusion follows.
\end{proof}

\begin{lemma}\label{high}
Let $K\in L^{\frac{6}{6s-3}}(\mathbb{R}^{3})$ with $s>\frac{1}{2}$, then $\Phi$ and $N$ have following properties:\\
(1) $\Phi: D^{s,2}(\mathbb{R}^{3})\mapsto D^{s,2}(\mathbb{R}^{3})$ is continuous and maps bounded sets into bounded sets;\\
(2) If $u_{n}\rightharpoonup u$ in $D^{s,2}(\mathbb{R}^{3})$, then up to a subsequence $\Phi(u_{n})\rightarrow \Phi(u)$ in $D^{s,2}(\mathbb{R}^{3})$ and $N(u_{n})\rightarrow N(u)$.
\end{lemma}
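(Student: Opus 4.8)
The plan is to establish the two claims by exploiting the explicit Riesz-potential representation \eqref{Riesz} together with the Hölder estimate \eqref{dt2}--\eqref{ph} and the local compactness from Lemma~\ref{Hardy}. For part (1), fix $u_n\to u$ in $D^{s,2}(\mathbb{R}^3)$. From \eqref{ph} applied to $u_n-u$ (or rather to the operator $T_{u_n}-T_u$), one sees $\|\phi^s_{u_n}-\phi^s_u\|_{D^{s,2}}=\|T_{u_n}-T_u\|_{\mathcal{L}(D^{s,2},\mathbb{R})}$, and
\[
|T_{u_n}(v)-T_u(v)|=\Bigl|\int_{\mathbb{R}^3}K(x)(u_n^2-u^2)v\,dx\Bigr|\leq \|K\|_{\frac{6}{6s-3}}\|u_n^2-u^2\|_{\frac{3}{3-s}}\|v\|_{2^{\ast}_s},
\]
wait—one must split $u_n^2-u^2=(u_n-u)(u_n+u)$ and apply Hölder with the three exponents $\frac{6}{6s-3}$, $2^{\ast}_s$, $2^{\ast}_s$, $2^{\ast}_s$ (checking $\frac{6s-3}{6}+\frac{3}{2^{\ast}_s}=1$ since $s>\tfrac12$), giving $|T_{u_n}(v)-T_u(v)|\leq C\|K\|_{\frac{6}{6s-3}}\|u_n-u\|_{2^{\ast}_s}\|u_n+u\|_{2^{\ast}_s}\|v\|_{2^{\ast}_s}$. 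Dividing by $\|v\|_{D^{s,2}}$ and using the Sobolev embedding shows $\|\phi^s_{u_n}-\phi^s_u\|_{D^{s,2}}\to 0$, proving continuity; boundedness of bounded sets is immediate from \eqref{ph}.

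For part (2), suppose $u_n\rightharpoonup u$ in $D^{s,2}(\mathbb{R}^3)$; then $(u_n)$ is bounded, so by \eqref{ph} the sequence $(\phi^s_{u_n})$ is bounded in $D^{s,2}(\mathbb{R}^3)$, and up to a subsequence $\phi^s_{u_n}\rightharpoonup \psi$ weakly in $D^{s,2}(\mathbb{R}^3)$ for some $\psi$. I would pass to the limit in the weak formulation \eqref{dt3}: for a fixed test function $v\in C^\infty_0(\mathbb{R}^3)$ with compact support $\Omega$, the left side converges to $\int (-\Delta)^{s/2}\psi\,(-\Delta)^{s/2}v$ by weak convergence; for the right side one writes $\int_\Omega K(x)(u_n^2-u^2)v\,dx$ and bounds it using Hölder on $\Omega$ by $\|K\|_{L^{\frac{6}{6s-3}}(\Omega)}\|u_n-u\|_{L^{2^{\ast}_s}(\Omega)}\|u_n+u\|_{L^{2^{\ast}_s}(\Omega)}\|v\|_{L^{2^{\ast}_s}(\Omega)}$—but here, to avoid needing strong $L^{2^{\ast}_s}_{loc}$ convergence (which fails at the critical exponent), it is cleaner to argue by density and truncation of $K$: approximate $K$ in $L^{\frac{6}{6s-3}}$ by $K_j\in C^\infty_0$, use Lemma~\ref{Hardy} to get $u_n\to u$ in $L^2_{loc}$ (hence $u_n^2\to u^2$ in $L^1_{loc}$), and control the tail uniformly by the $L^{\frac{6}{6s-3}}$-smallness of $K-K_j$ via the uniform $L^{2^{\ast}_s}$ bound on $(u_n^2)$. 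This identifies $\psi=\phi^s_u$ and shows $N(u_n)=\langle \phi^s_{u_n},\phi^s_{u_n}\rangle_{D^{s,2}}\to \langle\phi^s_u,\phi^s_u\rangle_{D^{s,2}}=N(u)$ provided one upgrades the weak convergence $\phi^s_{u_n}\rightharpoonup\phi^s_u$ to norm convergence.

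The main obstacle, then, is precisely that upgrade: showing $\|\phi^s_{u_n}\|_{D^{s,2}}\to\|\phi^s_u\|_{D^{s,2}}$. I would obtain it by testing \eqref{dt3} for $u_n$ with $v=\phi^s_{u_n}$ to get $\|\phi^s_{u_n}\|^2_{D^{s,2}}=\int K(x)u_n^2\phi^s_{u_n}\,dx$, then splitting $K$ into a compactly supported smooth part plus a small tail: on the compact part Lemma~\ref{Hardy} gives $u_n^2\to u^2$ strongly in $L^{p}_{loc}$ for a suitable $p<2^{\ast}_s/2$ while $\phi^s_{u_n}$ is bounded in $L^{2^{\ast}_s}$, so that integral converges to $\int K_j u^2\phi^s_u$; the tail is uniformly small by Hölder and the uniform bounds. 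Letting the truncation exhaust $K$ yields $\|\phi^s_{u_n}\|^2_{D^{s,2}}\to\int K(x)u^2\phi^s_u\,dx=\|\phi^s_u\|^2_{D^{s,2}}=N(u)$, which combined with $\phi^s_{u_n}\rightharpoonup\phi^s_u$ gives strong convergence $\Phi(u_n)\to\Phi(u)$ in the Hilbert space $D^{s,2}(\mathbb{R}^3)$ and finishes the proof. The hypothesis $s>\tfrac12$ enters in guaranteeing the Hölder exponents are admissible and that a genuine exponent $p\in[2,2^{\ast}_s/2)$ is available for the local compactness step.
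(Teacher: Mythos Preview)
Your argument is correct and the exponent bookkeeping works (in particular, $s>\tfrac12$ guarantees $(2^{\ast}_s)'=\tfrac{6}{3+2s}<\tfrac{2^{\ast}_s}{2}$, which is exactly what you need in the compact part of step~2). Part~(1) is essentially identical to the paper's, only with a four-factor H\"older split instead of grouping $K$ with $|u_n-u|$.

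For part~(2) you take a genuinely different route. The paper proves the \emph{operator-norm} convergence $\|T_{u_n}-T_u\|_{\mathcal L(D^{s,2},\mathbb R)}\to 0$ directly: it splits $\mathbb R^3$ into a large ball and its complement (where $\|K\|_{L^{6/(6s-3)}}$ is small), and on the ball further decomposes according to the level set $\{K>M\}$, using Lemma~\ref{Hardy} with the subcritical exponent $\tfrac{12}{3+2s}$ on the piece where $K$ is bounded. This yields $\Phi(u_n)\to\Phi(u)$ in one stroke; $N(u_n)\to N(u)$ then follows by a simple splitting. Your approach instead runs the classical Hilbert-space scheme ``weak convergence plus norm convergence implies strong convergence'': you first identify the weak limit of $\phi^s_{u_n}$ via test functions, then compute $\|\phi^s_{u_n}\|^2=\int K u_n^2\phi^s_{u_n}$ and pass to the limit using truncation of $K$. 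This works, but note that in your step~2 the phrase ``$\phi^s_{u_n}$ is bounded in $L^{2^{\ast}_s}$, so that integral converges to $\int K_j u^2\phi^s_u$'' is a little quick: boundedness alone does not close the term $\int K_j u^2(\phi^s_{u_n}-\phi^s_u)$---you need the weak convergence established in step~1 together with $K_j u^2\in L^{(2^{\ast}_s)'}_{\mathrm{loc}}$. With that clarification your proof is complete. The paper's approach is somewhat more economical (one estimate, uniform in $v$), while yours is a standard and perhaps more transparent Hilbert-space argument; both hinge on the same two ingredients, namely local compactness below $2^{\ast}_s$ and tail-smallness of $K$.
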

\begin{proof}
(1) By (\ref{dt3}), we see that
\begin{align}\label{seq}
\|T_{u}\|_{\mathcal{L}(D^{s,2},\mathbb{R})}=\|\Phi(u)\|_{D^{s,2}}, \ \forall \ u\in D^{s,2}(\mathbb{R}^{3}).
\end{align}
Assume that $u_{n}\in D^{s,2}(\mathbb{R}^{3})$, $u_{n}\rightarrow u$ in $D^{s,2}(\mathbb{R}^{3})$. Since $D^{s,2}(\mathbb{R}^{3})\hookrightarrow L^{\frac{6}{3-2s}}(\mathbb{R}^{3})$, we have $||u_{n}-u||\rightarrow 0$ in $L^{\frac{6}{3-2s}}(\mathbb{R}^{3})$, then $||u_{n}-u||^{\frac{3}{2s}}\rightarrow 0$ in $L^{\frac{4s}{3-2s}}(\mathbb{R}^{3})$.  Since $K\in L^{\frac{6}{6s-3}}(\mathbb{R}^{3})$, we have $\int_{\mathbb{R}^{3}}|K(x)|^{\frac{3}{2s}}|u_{n}-u|^{\frac{3}{2s}}dx\rightarrow 0$.
For any $v\in D^{s,2}(\mathbb{R}^{3})$, by H\"{o}lder inequality,
\begin{align*}
|T_{u_{n}}(v)-T_{u}(v)|&\leq \int_{\mathbb{R}^{3}}K(x)|(u^{2}_{n}-u^{2})v|dx\\
&\leq\left(\int_{\mathbb{R}^{3}}|K(x)|^{\frac{3}{2s}}|u_{n}-u|^{\frac{3}{2s}}dx\right)^{\frac{2s}{3}}\|u_{n}+u\|_{\frac{6}{3-2s}}\|v\|_{2^{\ast}_{s}}\\
&\rightarrow 0.
\end{align*}
Since $v$ is arbitrary, we get $\lim_{n\rightarrow \infty}\|T_{u_{n}}-T_{u}\|_{\mathcal{L}(D^{s,2},\mathbb{R})}=0$. By (\ref{ph}), $\Phi$ maps bounded sets into bounded sets. \\
(2) By (\ref{seq}), we only need to show that
\begin{align*}
\|T_{u_{n}}-T_{u}\|_{\mathcal{L}(D^{s,2},\mathbb{R})}\rightarrow 0 \ \mbox{as} \ n\rightarrow \infty.
\end{align*}
Let $\varepsilon$ be any given positive number. There exists $r_{\varepsilon}>0$ large enough such that $\|K\|_{L^{\frac{6}{6s-3}}(\mathbb{R}^{3}\backslash B(0,r_{\varepsilon}))}<\varepsilon$. For any $v\in D^{s,2}(\mathbb{R}^{3})$, by H\"{o}lder inequality,
\begin{align}\label{sequence}
|T_{u_{n}}(v)-T_{u}(v)|&=\int_{\mathbb{R}^{3}}K(x)|(u^{2}_{n}-u^{2})v|dx\nonumber\\
&\leq \int_{\mathbb{R}^{3}\backslash B(0,r_{\varepsilon})}K(x)|u^{2}_{n}-u^{2}||v|dx+\int_{B(0,r_{\varepsilon})}K(x)|u^{2}_{n}-u^{2}||v|dx\nonumber\\
&\leq\|K\|_{L^{\frac{6}{6s-3}}(\mathbb{R}^{3}\backslash B(0,r_{\varepsilon}))}\|u^{2}_{n}-u^{2}\|_{\frac{3}{3-2s}}\|v\|_{\frac{6}{3-2s}}\nonumber\\
&\quad+\left(\int_{B(0,r_{\varepsilon})}(K(x))^{\frac{6}{3+2s}}|u^{2}_{n}-u^{2}|^{\frac{6}{3+2s}}dx\right)^{\frac{3+2s}{6}}\|v\|_{2^{\ast}_{s}}\nonumber\\
&\leq \left(\varepsilon C+\left(\int_{B(0,r_{\varepsilon})}(K(x))^{\frac{6}{3+2s}}|u^{2}_{n}-u^{2}|^{\frac{6}{3+2s}}dx\right)^{\frac{3+2s}{6}}\|v\|_{2^{\ast}_{s}}\right)\|v\|_{D^{s,2}}.
\end{align}
Set $B_{M}=\{x\in B(0,r_{\varepsilon}):K(x)>M\}$. Since $K\in L^{\frac{6}{6s-3}}(\mathbb{R}^{3})$, then the volume $|B_{M}|\rightarrow 0$ as $M\rightarrow \infty$. By Lemma \ref{Hardy}, up to a subsequence,
$u_{n}\rightarrow u$ in $L_{loc}^{\frac{12}{3+2s}}(\mathbb{R}^{3})$. Then for large $M$, we have
\begin{align}\label{largeM}
&\left(\int_{B(0,r_{\varepsilon})}(K(x))^{\frac{6}{3+2s}}|u^{2}_{n}-u^{2}|^{\frac{6}{3+2s}}dx\right)^{\frac{3+2s}{6}}\nonumber\\
&=\int_{B_{M}}(K(x))^{\frac{6}{3+2s}}|u_{n}+u|^{\frac{6}{3+2s}}|u_{n}-u|^{\frac{6}{3+2s}}dx\nonumber\\
&\quad+\int_{B(0,r_{\varepsilon})\backslash B_{M}}(K(x))^{\frac{6}{3+2s}}|u_{n}+u|^{\frac{6}{3+2s}}|u_{n}-u|^{\frac{6}{3+2s}}dx\nonumber\\
&\leq \left(\int_{B_{M}}(K(x))^{\frac{6}{6s-3}}dx\right)^{\frac{6s-3}{3+2s}}\left(\int_{\mathbb{R}^{3}}|u_{n}+u|^{\frac{6}{3-2s}}dx\right)^{\frac{3-2s}{3+2s}}\left(\int_{\mathbb{R}^{3}}|u_{n}-u|^{\frac{6}{3-2s}}dx\right)^{\frac{3-2s}{3+2s}}\nonumber\\
&\quad+M^{\frac{6}{3+2s}}\left(\int_{B(0,r_{\varepsilon})}|u_{n}+u|^{\frac{12}{3+2s}}dx\right)^{\frac{1}{2}}\left(\int_{B(0,r_{\varepsilon})}|u_{n}-u|^{\frac{12}{3+2s}}dx\right)^{\frac{1}{2}}\nonumber\\
&\leq \varepsilon c+co(1).
\end{align}
Therefore,
\begin{align*}
\Phi(u_{n})\rightarrow \Phi(u) \ \mbox{in} \ D^{s,2}(\mathbb{R}^{3}).
\end{align*}
Since $D^{s,2}(\mathbb{R}^{3})\hookrightarrow L^{2_{s}^{\ast}}(\mathbb{R}^{3})$ and $\phi_{u_{n}}^{s}\rightarrow \phi_{u}^{s}$ in $D^{s,2}(\mathbb{R}^{3})$, then by H\"{o}lder inequality, we have
\begin{align}\label{embedding}
\int_{\mathbb{R}^{3}}K(x)(\phi_{u_{n}}^{s}-\phi_{u}^{s})u^{2}dx\leq \|K\|_{\frac{6}{6s-3}}\|u\|^{2}_{2_{s}^{\ast}}\|\phi_{u_{n}}^{s}-\phi_{u}^{s}\|_{2_{s}^{\ast}}=o(1).
\end{align}
By the same argument as in (\ref{sequence}) and (\ref{largeM}), replacing $v$ by $\phi_{u_{n}}^{s}$,  we get
\begin{align}\label{N}
\int_{\mathbb{R}^{3}}K(x)\phi_{u_{n}}^{s}(u_{n}^{2}-u^{2})dx \rightarrow 0 \ \mbox{as} \ n\rightarrow \infty.
\end{align}
From (\ref{embedding}), (\ref{N}) and
\begin{align*}
N(u_{n})-N(u)=\int_{\mathbb{R}^{3}}K(x)(\phi_{u_{n}}^{s}-\phi_{u}^{s})u^{2}dx+\int_{\mathbb{R}^{3}}K(x)\phi_{u_{n}}^{s}(u_{n}^{2}-u^{2})dx,
\end{align*}
it follows that $N(u_{n})\rightarrow N(u)$ as $n\rightarrow \infty$.
\end{proof}
We consider the limiting problem
\begin{equation}\label{linear}
 \ \left\{\begin{aligned}
&(-\Delta)^{s}u=|u|^{2^{\ast}_{s}-2}u \ \ \ \ \mbox{in} \ \mathbb{R}^{3},\\
&u\in D^{s,2}(\mathbb{R}^{3}).
\end{aligned}\right.
\end{equation}
It is known that every positive solution of (\ref{linear}) assumes the form (see \cite {Chen})
\begin{equation}\label{form}
\Psi_{\delta,x_{0},\kappa}(x)=\kappa\left(\frac{ \delta}{\delta^{2}+|x-x_{0}|^{2}}\right)^{\frac{3-2s}{2}}, \ \kappa>0,\ \delta>0, \ x_{0}\in \mathbb{R}^{3}.
\end{equation}
By \cite{C},  the  best constant $S_{s}$ is only attained at $\Psi_{\delta,x_{0},\kappa}(x)$. It is known that \cite {Cora} if we choose
\begin{equation*}
\kappa=b_{s}=2^{\frac{3-2s}{2}}\left(\frac{\Gamma(\frac{3+2s}{2})}{\Gamma(\frac{3-2s}{2})}\right)^{\frac{3-2s}{4s}},
\end{equation*}
then
\begin{equation}\label{psi}
\psi_{\delta,x_{0}}(x)=b_{s}\left(\frac{ \delta}{\delta^{2}+|x-x_{0}|^{2}}\right)^{\frac{3-2s}{2}}
\end{equation}
satisfies (\ref{linear}), and
\begin{equation}\label{energy}
\int_{\mathbb{R}^{3}}|(-\Delta)^{\frac{s}{2}}\psi_{\delta,x_{0}}|^{2}dx=\int_{\mathbb{R}^{3}}|\psi_{\delta,x_{0}}|^{2_{s}^{\ast}}dx=S_{s}^{\frac{3}{2s}}.
\end{equation}
The energy functional related to (\ref{linear}) is defined on $D^{s,2}(\mathbb{R}^{3})$ by
\begin{equation*}
I_{\infty}(u)=\frac{1}{2}\int_{\mathbb{R}^{3}}|(-\Delta)^{\frac{s}{2}}u|^{2}dx-\frac{1}{2_{s}^{\ast}}\int_{\mathbb{R}^{3}}|u|^{2_{s}^{\ast}}dx.
\end{equation*}
Then
\begin{equation*}
I_{\infty}(\psi_{\delta,x_{0}})=\frac{s}{3}S_{s}^{\frac{3}{2s}}.
\end{equation*}

We define the Nehari manifold corresponding to (\ref{fsp}) and (\ref{linear}) by $\mathcal{N}$ and $\mathcal{N}_{\infty}$:
\begin{align}\label{Nehari}
\mathcal{N}&=\{u\in D^{s,2}(\mathbb{R}^{3})\backslash \{0\}: \langle I'(u), u\rangle=0\}, \nonumber\\
\mathcal{N}_{\infty}&=\{u\in D^{s,2}(\mathbb{R}^{3})\backslash \{0\}: \langle I_{\infty}'(u), u\rangle=0\}.
\end{align}
Set
\begin{align}\label{inf}
m=\inf_{u\in \mathcal{N}} I(u), \ m_{\infty}=\inf_{u\in \mathcal{N}_{\infty}} I_{\infty}(u).
\end{align}
\begin{lemma}\label{minimization}
Assume that $K\in L^{\frac{6}{6s-3}}(\mathbb{R}^{3})$ and $V\in L^{\frac{3}{2s}}(\mathbb{R}^{3})$ are nonnegative functions. Then $m=m_{\infty}$ and $m$ can't be attained when $\|V\|_{\frac{3}{2s}}+\|K\|_{\frac{6}{6s-3}}>0$.
\end{lemma}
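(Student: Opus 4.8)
The plan is to identify $m$ and $m_\infty$ with the Brezis--Nirenberg level $\frac{s}{3}S_s^{3/(2s)}$ and to read off non-attainment from the equality cases. Throughout put $\|u\|_V^2=\int_{\mathbb{R}^3}(|(-\Delta)^{\frac{s}{2}}u|^2+V(x)u^2)\,dx$, recall $N(u)=\int_{\mathbb{R}^3}K(x)\phi^s_uu^2\,dx\ge 0$ and $\|u\|_{D^{s,2}}^2\le\|u\|_V^2$, and note that since $2^{\ast}_{s}=\frac{6}{3-2s}>4$ for $s\in(\frac34,1]$, both $\frac12-\frac1{2^{\ast}_{s}}=\frac s3$ and $\frac14-\frac1{2^{\ast}_{s}}$ are strictly positive. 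For $u\in\mathcal{N}$ the identity $\langle I'(u),u\rangle=0$ reads $\|u\|_V^2+N(u)=\int_{\mathbb{R}^3}|u|^{2^{\ast}_{s}}dx$, so $I(u)=(\frac12-\frac1{2^{\ast}_{s}})\|u\|_V^2+(\frac14-\frac1{2^{\ast}_{s}})N(u)$; applied to $I_\infty$ on $\mathcal{N}_\infty$ the same computation gives $I_\infty(u)=\frac s3\|u\|_{D^{s,2}}^2$, and combined with the definition of $S_s$ this yields $m_\infty=\frac s3 S_s^{3/(2s)}$, attained exactly at $\pm\psi_{\delta,x_0}$ (in agreement with (\ref{energy})).

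For $m\ge m_\infty$: let $u\in\mathcal{N}$. Dropping the two nonnegative terms in the Nehari identity gives $\|u\|_{D^{s,2}}^2\le\int|u|^{2^{\ast}_{s}}$, while the Sobolev inequality gives $\|u\|_{D^{s,2}}^2\ge S_s(\int|u|^{2^{\ast}_{s}})^{2/2^{\ast}_{s}}$; together these force $\int|u|^{2^{\ast}_{s}}\ge S_s^{3/(2s)}$ and then $\|u\|_{D^{s,2}}^2\ge S_s\big(S_s^{3/(2s)}\big)^{2/2^{\ast}_{s}}=S_s^{3/(2s)}$. Since the two coefficients are positive and $\|u\|_V^2\ge\|u\|_{D^{s,2}}^2$, $I(u)\ge\frac s3\|u\|_{D^{s,2}}^2\ge\frac s3 S_s^{3/(2s)}=m_\infty$, hence $m\ge m_\infty$.

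For $m\le m_\infty$: first note that for any $u\ne 0$ one has $\frac{d}{dt}I(tu)=t\,h(t)$ with $h(t)=\|u\|_V^2+t^2N(u)-t^{2^{\ast}_{s}-2}\int|u|^{2^{\ast}_{s}}$; since $h(0)=\|u\|_V^2>0$, $h$ is nondecreasing for small $t$ (because $2^{\ast}_{s}-2>2$) and $h(t)\to-\infty$, so $h$ has a unique zero $t(u)>0$ and $I(t(u)u)=\max_{t>0}I(tu)$, giving $m=\inf_{u\ne0}\max_{t>0}I(tu)$. Now take $\psi_\delta:=\psi_{\delta,0}$ from (\ref{psi}): its $D^{s,2}$-norm is constant by (\ref{energy}) and $\psi_\delta(x)\to 0$ for every $x\ne0$, so along any $\delta_n\to0$, Lemma \ref{Hardy} forces every subsequential weak limit in $D^{s,2}$ to vanish, i.e. $\psi_{\delta_n}\rightharpoonup 0$ in $D^{s,2}(\mathbb{R}^3)$. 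Then Lemma \ref{high}(2) gives $N(\psi_{\delta_n})\to0$, and splitting $V$ into a bounded, compactly supported part plus a part of small $L^{3/(2s)}$-norm and using the local compact embedding $D^{s,2}\hookrightarrow L^2_{loc}$ gives $\int V\psi_{\delta_n}^2\to0$. Hence $\max_{t>0}I(t\psi_{\delta_n})=\max_{t>0}\big[\tfrac{t^2}{2}(S_s^{3/(2s)}+o(1))+\tfrac{t^4}{4}o(1)-\tfrac{t^{2^{\ast}_{s}}}{2^{\ast}_{s}}S_s^{3/(2s)}\big]$; since the $-t^{2^{\ast}_{s}}$ term dominates for large $t$ the maximizer stays in a bounded range uniformly in $n$, so this equals $\max_{\tau>0}\big[\tfrac{\tau^2}{2}-\tfrac{\tau^{2^{\ast}_{s}}}{2^{\ast}_{s}}\big]S_s^{3/(2s)}+o(1)=\frac s3 S_s^{3/(2s)}+o(1)$. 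Therefore $m\le m_\infty$, and $m=m_\infty=\frac s3 S_s^{3/(2s)}$.

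Finally, suppose $u_0\in\mathcal{N}$ realizes $I(u_0)=m=m_\infty$. Then every inequality in the chain of the second step is an equality: because $\frac14-\frac1{2^{\ast}_{s}}>0$ this forces $N(u_0)=0$ and $\int V u_0^2=0$, while $\|u_0\|_{D^{s,2}}^2=S_s^{3/(2s)}$ together with $\int|u_0|^{2^{\ast}_{s}}\ge S_s^{3/(2s)}$ pins down $\int|u_0|^{2^{\ast}_{s}}=S_s^{3/(2s)}$ and equality in the Sobolev inequality, so $u_0$ is an extremal of $S_s$; by the classification (\ref{form}) with this normalization $u_0=\pm\psi_{\delta,x_0}$, in particular $u_0^2>0$ a.e. Now $\int V u_0^2=0$ forces $V\equiv0$; and $\phi^s_{u_0}(x)=c_s\int_{\mathbb{R}^3}\frac{K(y)u_0^2(y)}{|x-y|^{3-2s}}dy$ is strictly positive at every $x$ whenever $K\not\equiv0$, so $N(u_0)=\int K\phi^s_{u_0}u_0^2>0$ unless $K\equiv0$ --- contradicting $\|V\|_{3/(2s)}+\|K\|_{6/(6s-3)}>0$. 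Hence $m$ is not attained. The main obstacle is the upper-bound step: one has to check that the potential contributions evaluated on the concentrating bubbles disappear, i.e. $N(\psi_\delta)\to0$ and $\int V\psi_\delta^2\to0$; I would deduce the first from $\psi_\delta\rightharpoonup0$ via Lemma \ref{high}(2) and the second by the truncation argument above, though a direct Hardy--Littlewood--Sobolev estimate on the Riesz double integral defining $N(\psi_\delta)$ is an equally viable route.
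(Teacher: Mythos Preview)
Your proof is correct and takes a genuinely different route from the paper's. For the lower bound $m\ge m_\infty$, the paper projects each $u\in\mathcal{N}$ onto $\mathcal{N}_\infty$ via $s(u)u$ and chains $m_\infty\le I_\infty(s(u)u)\le I(s(u)u)\le I(u)$; you bypass this projection entirely and obtain the explicit inequality $I(u)\ge\frac{s}{3}\|u\|_{D^{s,2}}^2\ge\frac{s}{3}S_s^{3/(2s)}$ straight from the Nehari identity and Sobolev's inequality. For the upper bound $m\le m_\infty$, the paper kills the potential terms by \emph{translating} a fixed bubble to infinity, $w_n=w(\cdot-z_n)$ with $|z_n|\to\infty$, whereas you \emph{concentrate} at a fixed point, $\psi_{\delta_n,0}$ with $\delta_n\to 0$; both drive the weak limit to $0$ and feed into Lemma~\ref{high}(2). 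For non-attainment, the paper again runs the projection argument to force $s(\bar u)=1$ and land $\bar u$ in $\mathcal{N}_\infty$, while you trace equality through your direct chain to force $N(u_0)=0$, $\int Vu_0^2=0$, and equality in Sobolev, then invoke the classification of extremals.

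Your approach is more self-contained and immediately quantitative: you identify $m=m_\infty=\frac{s}{3}S_s^{3/(2s)}$ from the outset, and non-attainment drops out of the equality analysis in Sobolev's inequality. The paper's projection argument is slightly more structural and would transfer more readily to settings where the limiting level is not explicitly computable. One presentational point: your appeal to Lemma~\ref{Hardy} to show $\psi_{\delta_n}\rightharpoonup 0$ is really an appeal to the local compact embedding $D^{s,2}\hookrightarrow L^2_{\mathrm{loc}}$ established in its proof (apply it to $\psi_{\delta_{n_k}}-w$ for any subsequential weak limit $w$ and combine with the pointwise convergence $\psi_{\delta_n}(x)\to 0$ for $x\ne 0$); it would read more cleanly stated that way, but the logic is sound.
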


\begin{proof}
Let $u\in D^{s,2}(\mathbb{R}^{3})$ and define the function
\begin{align}
\gamma(t):&=\langle I'(tu),tu\rangle\nonumber\\
&=t^{2}\int_{\mathbb{R}^{3}}(|(-\Delta)^{\frac{s}{2}}u|^{2}+V(x)u^{2})dx+t^{4}\int_{\mathbb{R}^{3}}K(x)\phi_{u}^{s}u^{2}dx-t^{2_{s}^{\ast}}\int_{\mathbb{R}^{3}}|u|^{2_{s}^{\ast}}dx\nonumber\\
&=at^{2}+bt^{4}-ct^{2_{s}^{\ast}},
\end{align}
where $a=\int_{\mathbb{R}^{3}}(|(-\Delta)^{\frac{s}{2}}u|^{2}+V(x)u^{2})dx$, $b=\int_{\mathbb{R}^{3}}K(x)\phi_{u}^{s}u^{2}dx$, $c=\int_{\mathbb{R}^{3}}|u|^{2_{s}^{\ast}}dx$.
Since $s>\frac{3}{4}$, by a similar argument as the proof of Lemma 3.3 in \cite{Ruiz}, we know that there exist unique $t(u)>0$, $s(u)>0$ such that $t(u)u\in \mathcal{N},s(u)u\in \mathcal{N}_{\infty}$, and $I(t(u)u)=\max_{t>0}I(tu)$, $I_{\infty}(s(u)u)=\max_{t>0}I_{\infty}(tu)$. For any $u\in \mathcal{N}$,
\begin{align}\label{inequality}
m_{\infty}&\leq I_{\infty}(s(u)u)\nonumber\\
&\leq\frac{1}{2}\|s(u)u\|_{D^{s,2}}^{2}+\frac{s^{2}(u)}{2}\int_{\mathbb{R}^{3}}V(x)u^{2}dx+\frac{1}{4}N(s(u)u)-\frac{1}{2_{s}^{\ast}}\|s(u)u\|_{2_{s}^{\ast}}^{2_{s}^{\ast}}\nonumber\\
&=I(s(u)u).
\end{align}
Since $u\in \mathcal{N}$, it is easy to see that $I(tu)$ attains its maximum at $t=1$, then for any $t>0$,
\begin{align}\label{attain}
I(tu)\leq I(u).
\end{align}
By (\ref{inequality}), (\ref{attain}), we have
\begin{align}\label{first}
m_{\infty}\leq m.
\end{align}
Assume that $w$ is a positive solution of (\ref{linear}) centered at zero, $(z_{n})_{n}\subset \mathbb{R}^{3}$ satisfying $|z_{n}|\rightarrow \infty$ as $n\rightarrow \infty$,
$w_{n}(\cdot)=w(\cdot-z_{n})$ and $t_{n}=t(w_{n})$. It is clear that $\|w_{n}\|_{D^{s,2}}=\|w\|_{D^{s,2}}$, $\|w_{n}\|_{2_{s}^{\ast}}=\|w\|_{2_{s}^{\ast}}$ and $w_{n}\rightharpoonup 0$ in $D^{s,2}(\mathbb{R}^{3})$ as $n\rightarrow \infty$. It follows from Lemma \ref{high} that
\begin{align}\label{nw}
N(w_{n})\rightarrow 0.
\end{align}
Since $D^{s,2}(\mathbb{R}^{3})\hookrightarrow L^{2_{s}^{\ast}}(\mathbb{R}^{3})$, then $w_{n}^{2}\rightharpoonup 0$ in $L^{\frac{2_{s}^{\ast}}{2}}(\mathbb{R}^{3})$, this together with $V\in L^{\frac{3}{2s}}(\mathbb{R}^{3})$ yield that
\begin{align}\label{vw}
\int_{\mathbb{R}^{3}}V(x)w_{n}^{2}\rightarrow 0.
\end{align}
By (\ref{nw}), (\ref{vw}), we have
\begin{align}\label{iun}
I(u_{n})=\frac{t_{n}^{2}}{2}\|w\|_{D^{s,2}}+\frac{t_{n}^{2}}{2}o_{n}(1)+\frac{t_{n}^{4}}{4}o_{n}(1)-\frac{t_{n}^{2_{s}^{\ast}}}{2_{s}^{\ast}}\|w\|_{2_{s}^{\ast}}^{2_{s}^{\ast}}.
\end{align}
Since $w_{n}\in \mathcal{N}_{\infty}$ and $t_{n}w_{n}\in \mathcal{N}$, then
\begin{align}\label{w}
\|w_{n}\|_{D^{s,2}}^{2}=\|w_{n}\|_{2_{s}^{\ast}}^{2_{s}^{\ast}},
\end{align}
and
\begin{align}\label{tn}
t_{n}^{2}\|w_{n}\|_{D^{s,2}}^{2}+t_{n}^{2}\int_{\mathbb{R}^{3}}V(x)w_{n}^{2}dx+t_{n}^{4}N(w_{n})=t_{n}^{2_{s}^{\ast}}\|w_{n}\|_{2_{s}^{\ast}}^{2_{s}^{\ast}}.
\end{align}
From (\ref{w}) and (\ref{tn}), it follows that
\begin{align}\label{imply}
(1-t_{n}^{2_{s}^{\ast}-2})\|w_{n}\|_{D^{s,2}}^{2}+\int_{\mathbb{R}^{3}}V(x)w_{n}^{2}dx+t_{n}^{2}N(w_{n})=0.
\end{align}
By (\ref{nw}), (\ref{vw}) and (\ref{tn}), for sufficiently large $n$, we have
\begin{align*}
\|w\|_{2_{s}^{\ast}}^{2_{s}^{\ast}}t_{n}^{2_{s}^{\ast}-2}\leq \|w\|_{D^{s,2}}^{2}+o_{n}(1)+t_{n}^{2}o_{n}(1).
\end{align*}
Then $(t_{n})_{n}$ is bounded. This together with $\|w_{n}\|_{D^{s,2}}=\|w\|_{D^{s,2}}$ and (\ref{imply}) yield that $\lim_{n\rightarrow \infty}t_{n}=1$.
By (\ref{iun}), we see that  $\lim_{n\rightarrow \infty}I(u_{n})=m_{\infty}$. Therefore by (\ref{inf}) and (\ref{first}), $m=m_{\infty}$.

Now we will prove that $m$ can't be attained. Arguing by contradiction, we let $\bar{u}\in \mathcal{N}$ be a function such that $I(\bar{u})=m=m_{\infty}$. Then, since $s(\bar{u})\bar{u}\in \mathcal{N}_{\infty}$, we have
\begin{align*}
m_{\infty}&\leq I_{\infty}(s(\bar{u})\bar{u})\\
&\leq \frac{1}{2}\|s(\bar{u})\bar{u}\|_{D^{s,2}}^{2}+\frac{s^{2}(\bar{u})}{2}\int_{\mathbb{R}^{3}}V(x)\bar{u}^{2}dx+\frac{1}{4}N(s(\bar{u})\bar{u})-\frac{1}{2_{s}^{\ast}}\|s(\bar{u})\bar{u}\|_{2_{s}^{\ast}}^{2_{s}^{\ast}}\\
&\leq I(\bar{u})=m_{\infty}.
\end{align*}
We deduce that
\begin{align*}
\frac{s^{2}(\bar{u})}{2}\int_{\mathbb{R}^{3}}V(x)\bar{u}^{2}dx=0, \ N(s(\bar{u})\bar{u})=0, \ s(\bar{u})=1,
\end{align*}
then $\bar{u}\in \mathcal{N}_{\infty}$, $I_{\infty}(\bar{u})=m_{\infty}$. Therefore $\bar{u}$ admits the form (\ref{form}), contradicting $\int_{\mathbb{R}^{3}}V(x)\bar{u}^{2}dx=0$ and $N(\bar{u})=0$.
\end{proof}

\section{\textbf{A compactness theorem}}
\begin{lemma}\label{yang}(see Lemma 2.5 in \cite{Yang})
Let $\{u_{n}\}\subset H_{loc}^{s}(\mathbb{R}^{N})$ be a bounded sequence of functions such that $u_{n}\rightharpoonup 0$ in $H^{s}(\mathbb{R}^{N})$. Suppose that there exist a bounded open set $Q\subset \mathbb{R}^{N}$ and a
positive constant $\gamma>0$ such that
\begin{align*}
\int_{Q}|(-\triangle)^{s}u_{n}|dx\geq \gamma>0, \ \int_{Q}|u_{n}|^{2_{s}^{\ast}}dx\geq \gamma>0.
\end{align*}
Moreover suppose that
\begin{align*}
(-\Delta)^{s}u_{n}-|u_{n}|^{2_{s}^{\ast}-2}u_{n}=\chi_{n},
\end{align*}
where $\chi_{n}\in H^{-s}(\mathbb{R}^{N})$ and
\begin{align*}
|\langle\chi_{n},\varphi\rangle|\leq \varepsilon_{n}\|\varphi\|_{H^{s}(\mathbb{R}^{N})}, \ \forall \ \varphi\in C_{0}^{\infty}(U),
\end{align*}
where $U$ is an open neighborhood of $Q$ and $\varepsilon_{n}$ is a sequence of positive numbers converging to 0. Then there exist a sequence of points $\{y_{n}\}\subset \mathbb{R}^{N}$ and a sequence of positive numbers $\{\sigma_{n}\}$ such that
\begin{align*}
v_{n}:=\sigma_{n}^{\frac{N-2s}{2}}u_{n}(\sigma_{n}x+y_{n})
\end{align*}
converges weakly in $D^{s,2}(\mathbb{R}^{N})$ to a nontrivial solution of
\begin{align*}
(-\Delta)^{s}u=|u|^{2_{s}^{\ast}-2}, \ u\in D^{s,2}(\mathbb{R}^{N}),
\end{align*}
morover,
\begin{align*}
y_{n}\rightarrow \bar{y}\in \bar{Q} \ \mbox{and} \ \sigma_{n}\rightarrow 0.
\end{align*}
\end{lemma}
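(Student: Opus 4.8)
The plan is to run the classical blow-up (concentration function) argument of Brezis--Coron and Struwe, adapted to the nonlocal operator $(-\Delta)^{s}$. Since $\{u_{n}\}$ is bounded in $H^{s}(\mathbb{R}^{N})$ it is bounded in $D^{s,2}(\mathbb{R}^{N})$, so after passing to a subsequence we may assume $u_{n}\rightharpoonup 0$ in $D^{s,2}(\mathbb{R}^{N})$, $u_{n}\to 0$ in $L^{2}_{\mathrm{loc}}$ and a.e.\ (Lemma \ref{Hardy}), and $|u_{n}|^{2^{\ast}_{s}}dx\rightharpoonup^{\ast}\nu$ for a bounded nonnegative measure $\nu$. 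From $\int_{Q}|u_{n}|^{2^{\ast}_{s}}dx\geq\gamma$ and upper semicontinuity of mass on the compact set $\overline{Q}$ we get $\nu(\overline{Q})\geq\gamma>0$; since the weak limit of $u_{n}$ is $0$, the fractional concentration--compactness principle forces $\nu$ to be purely atomic, so it has an atom $\bar y\in\overline{Q}$. I would then introduce the L\'evy concentration function $Q_{n}(t)=\sup_{y}\int_{B_{t}(y)}|u_{n}|^{2^{\ast}_{s}}dx$, continuous and nondecreasing with $Q_{n}(0)=0$ and $Q_{n}(\operatorname{diam}Q)\geq\gamma$, fix a small $\delta\in(0,\gamma)$ whose size is dictated only by $S_{s},N,s$ (see below), choose $\sigma_{n}>0$ with $Q_{n}(\sigma_{n})=\delta$ and points $y_{n}$ with $\int_{B_{\sigma_{n}}(y_{n})}|u_{n}|^{2^{\ast}_{s}}dx\geq\delta/2$ (a standard localisation near $\overline{Q}$, via restricting the supremum to centres in a fixed bounded neighbourhood of $\overline{Q}$, keeps $y_{n}$ bounded), and set
\[
v_{n}(x):=\sigma_{n}^{\frac{N-2s}{2}}u_{n}(\sigma_{n}x+y_{n}).
\]

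Both $\|\cdot\|_{D^{s,2}}$ and $\|\cdot\|_{2^{\ast}_{s}}$ are invariant under this rescaling, so $\{v_{n}\}$ is bounded in $D^{s,2}(\mathbb{R}^{N})$ and, by construction,
\[
\sup_{y}\int_{B_{1}(y)}|v_{n}|^{2^{\ast}_{s}}dx=\delta,\qquad \int_{B_{1}(0)}|v_{n}|^{2^{\ast}_{s}}dx\geq\frac{\delta}{2}.
\]
Passing to a further subsequence, $v_{n}\rightharpoonup v$ in $D^{s,2}(\mathbb{R}^{N})$, $v_{n}\to v$ in $L^{2}_{\mathrm{loc}}$ and a.e. The equation transforms into $(-\Delta)^{s}v_{n}-|v_{n}|^{2^{\ast}_{s}-2}v_{n}=\widetilde\chi_{n}$, where $\widetilde\chi_{n}$ is the pull-back of $\chi_{n}$. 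Since $\sigma_{n}\to 0$ (proved below), the $L^{2}$-part of the $H^{s}$-norm of a rescaled fixed test function is $O(\sigma_{n}^{2s})$ and the support of the pull-back test function is eventually contained in $U$; hence $\langle\widetilde\chi_{n},\varphi\rangle\to 0$ for every $\varphi\in C_{0}^{\infty}(\mathbb{R}^{N})$. Using that $|v_{n}|^{2^{\ast}_{s}-2}v_{n}$ is bounded in $L^{(2^{\ast}_{s})'}$ and converges a.e.\ to $|v|^{2^{\ast}_{s}-2}v$, one lets $n\to\infty$ to conclude that $v$ is a weak solution of $(-\Delta)^{s}v=|v|^{2^{\ast}_{s}-2}v$ in $D^{s,2}(\mathbb{R}^{N})$.

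The crux is the nontriviality of $v$. Suppose $v\equiv 0$, so $v_{n}\rightharpoonup 0$. Fix any ball $B_{1/2}(p)$ with $p$ bounded and $\eta\in C_{0}^{\infty}(B_{1/2}(p))$ with $0\leq\eta\leq 1$, $\eta\equiv 1$ on $B_{1/4}(p)$. Testing the rescaled equation with $\eta^{2}v_{n}$, and using the fractional cutoff identity expressing $\int(-\Delta)^{s/2}v_{n}\,(-\Delta)^{s/2}(\eta^{2}v_{n})\,dx$ as $\|\eta v_{n}\|_{D^{s,2}}^{2}$ minus a nonlocal remainder which, by Cauchy--Schwarz and the boundedness of $\{v_{n}\}$ in $D^{s,2}$, is controlled by $C\|\eta\|_{C^{1}}^{2}\|v_{n}\|_{L^{2}(B_{2}(p))}^{2}=o(1)$ (Lemma \ref{Hardy}), one arrives at
\[
\|\eta v_{n}\|_{D^{s,2}}^{2}\leq \int |v_{n}|^{2^{\ast}_{s}-2}(\eta v_{n})^{2}dx+\langle\widetilde\chi_{n},\eta^{2}v_{n}\rangle+o(1).
\]
By H\"older's inequality with exponents $\frac{N}{2s}$ and $\frac{2^{\ast}_{s}}{2}$, together with $\operatorname{supp}\eta\subset B_{1}(p)$ and the Sobolev inequality,
\[
\int |v_{n}|^{2^{\ast}_{s}-2}(\eta v_{n})^{2}dx\leq\Bigl(\int_{B_{1}(p)}|v_{n}|^{2^{\ast}_{s}}dx\Bigr)^{\!\frac{2s}{N}}\|\eta v_{n}\|_{2^{\ast}_{s}}^{2}\leq \delta^{\frac{2s}{N}}S_{s}^{-1}\|\eta v_{n}\|_{D^{s,2}}^{2}.
\]
Since $\langle\widetilde\chi_{n},\eta^{2}v_{n}\rangle\to 0$, choosing $\delta\leq (S_{s}/2)^{N/(2s)}$ absorbs the critical term and yields $\eta v_{n}\to 0$ in $D^{s,2}(\mathbb{R}^{N})$, hence $v_{n}\to 0$ in $L^{2^{\ast}_{s}}(B_{1/4}(p))$. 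As the estimate is uniform under translation and $p$ ranges over a bounded set, $v_{n}\to 0$ in $L^{2^{\ast}_{s}}_{\mathrm{loc}}(\mathbb{R}^{N})$, contradicting $\int_{B_{1}(0)}|v_{n}|^{2^{\ast}_{s}}dx\geq\delta/2$. Therefore $v\neq 0$ is the desired nontrivial solution.

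Finally, $\sigma_{n}$ is bounded above since $Q_{n}(\operatorname{diam}Q)\geq\gamma>\delta=Q_{n}(\sigma_{n})$ and $Q_{n}$ is nondecreasing; and $\sigma_{n}\to 0$, for otherwise a subsequence would stay bounded away from $0$ and $\infty$, the rescaling operators would converge to a bounded invertible operator on $D^{s,2}(\mathbb{R}^{N})$, and $u_{n}\rightharpoonup 0$ would force $v_{n}\rightharpoonup 0$, i.e.\ $v=0$, which is impossible. Since $y_{n}$ stays in a fixed neighbourhood of $\overline{Q}$, a subsequence converges to $\bar y$; the bubble at $(y_{n},\sigma_{n})$ carries critical mass $\geq\delta/2$ that, as $\sigma_{n}\to 0$, accumulates at $\bar y$, so $\bar y$ is an atom of $\nu$, and shrinking the localisation neighbourhood towards $\overline{Q}$ places $\bar y\in\overline{Q}$; this is the required sequence. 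The main obstacle is precisely the nontriviality step: the absorb-the-critical-term computation has to be carried out for the nonlocal operator, where a cutoff produces genuine remainder terms that must be dominated by the local $L^{2}$-compactness of Lemma \ref{Hardy}; and one must organise the argument so that the threshold $\delta$ is fixed in terms of $S_{s}$ \emph{before} choosing $\sigma_{n}$, while the vanishing $\sigma_{n}\to 0$ is then deduced (not assumed) — indeed it is forced by $v\neq 0$ together with the boundedness of $\sigma_{n}$ and $y_{n}$, which is exactly why this bootstrap is the delicate part.
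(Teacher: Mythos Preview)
The paper does not prove this lemma: it is quoted verbatim as ``see Lemma 2.5 in \cite{Yang}'' and used as a black box in the proof of Lemma~\ref{unps}. There is therefore no argument in the paper to compare yours against.

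Your sketch is the standard Br\'ezis--Coron/Struwe blow-up via the L\'evy concentration function, which is indeed the method behind the cited result. One point deserves tightening: as written, the argument is circular. You invoke $\sigma_{n}\to 0$ midway (to place the support of the pulled-back test function inside $U$ and to kill the $L^{2}$ part of the $H^{s}$-norm of the rescaled test function), and only afterwards deduce $\sigma_{n}\to 0$ from $v\neq 0$; but $v\neq 0$ was obtained precisely by testing the rescaled equation, which required those supports to lie in $U$. The clean repair is to establish $\sigma_{n}\to 0$ \emph{first}: you already have $\sigma_{n}\leq \operatorname{diam}Q$; if along a subsequence $\sigma_{n}\geq\sigma_{0}>0$, then $Q_{n}(r)\leq Q_{n}(\sigma_{n})=\delta$ for every $r\leq\sigma_{0}$, so run the cutoff absorption estimate directly on $u_{n}$ with balls of a fixed small radius $r_{0}\leq\sigma_{0}$ chosen so that $B_{r_{0}}(y)\subset U$ for all $y\in\overline{Q}$ (possible since $U$ is an open neighbourhood of $\overline{Q}$). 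This yields $u_{n}\to 0$ in $L^{2^{\ast}_{s}}(Q)$, contradicting $\int_{Q}|u_{n}|^{2^{\ast}_{s}}\geq\gamma$. With $\sigma_{n}\to 0$ secured in advance, the rest of your argument (equation in the limit, nontriviality of $v$, $y_{n}\to\bar y\in\overline{Q}$) goes through. A secondary remark: the nonlocal cutoff remainder is not literally bounded by $C\|\eta\|_{C^{1}}^{2}\|v_{n}\|_{L^{2}(B_{2}(p))}^{2}$ --- one has to split the double integral $\iint |\eta(x)-\eta(y)|^{2}|v_{n}(x)|^{2}/|x-y|^{N+2s}\,dx\,dy$ into near-diagonal and far-field pieces --- but this is routine in the fractional setting and does give $o(1)$ once $v_{n}\to 0$ in $L^{2}_{\mathrm{loc}}$.
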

\begin{lemma}\label{unps}
Let $\{u_{n}\}$ be a Palais-Smale sequence for $I$, such that $u_{n}\in C_{0}^{\infty}(\mathbb{R}^{3})$ and
\begin{align*}
&u_{n}\rightharpoonup 0 \ weakly \ in \ D^{s,2}(\mathbb{R}^{3})\\
&u_{n}\nrightarrow 0 \ strongly \ in \ D^{s,2}(\mathbb{R}^{3}).
\end{align*}
Then there exist a sequence of points $\{y_{n}\}\subset \mathbb{R}^{3}$ and a sequence of positive numbers $\{\sigma_{n}\}$ such that
\begin{align*}
v_{n}(x)=\sigma_{n}^{\frac{3-2s}{2}}u_{n}(\sigma_{n}x+y_{n})
\end{align*}
converges weakly in $D^{s,2}(\mathbb{R}^{3})$ to a nontrival solution of (\ref{linear}) and
\begin{align}\label{iun1}
I(u_{n})=I_{\infty}(v)+I_{\infty}(v_{n}-v)+o(1)
\end{align}
\begin{align}\label{iun2}
\|u_{n}\|_{D^{s,2}}^{2}&=\|v\|_{D^{s,2}}^{2}+\|v_{n}-v\|_{D^{s,2}}^{2}+o(1).
\end{align}
\end{lemma}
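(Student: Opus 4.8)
The strategy is a concentration–compactness argument. Since $\{u_n\}$ is a Palais–Smale sequence for $I$ with $u_n\rightharpoonup 0$, I first establish that $\{u_n\}$ is bounded in $D^{s,2}(\mathbb{R}^3)$. This follows from the standard trick: for large $n$,
\begin{align*}
c+o(1)+o(1)\|u_n\|_{D^{s,2}} &\geq I(u_n)-\tfrac{1}{2^\ast_s}\langle I'(u_n),u_n\rangle\\
&= \Bigl(\tfrac12-\tfrac{1}{2^\ast_s}\Bigr)\Bigl(\|u_n\|_{D^{s,2}}^2+\int_{\mathbb{R}^3}V(x)u_n^2\,dx\Bigr)+\Bigl(\tfrac14-\tfrac{1}{2^\ast_s}\Bigr)N(u_n),
\end{align*}
and since $s>\tfrac34$ gives $2^\ast_s>4$, both coefficients are positive, and $V\geq0$, $N(u_n)\geq0$, so $\|u_n\|_{D^{s,2}}$ is bounded. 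Next, because $u_n\rightharpoonup 0$ in $D^{s,2}(\mathbb{R}^3)$, Lemma \ref{high} yields $N(u_n)\to 0$, and since $u_n^2\rightharpoonup 0$ in $L^{2^\ast_s/2}(\mathbb{R}^3)$ with $V\in L^{3/2s}(\mathbb{R}^3)$, also $\int_{\mathbb{R}^3}V(x)u_n^2\,dx\to 0$. Consequently the ``lower order'' terms drop out and $\langle I'(u_n),u_n\rangle=o(1)$ forces $\|u_n\|_{D^{s,2}}^2=\|u_n\|_{2^\ast_s}^{2^\ast_s}+o(1)$; since $u_n\nrightarrow 0$ strongly, both quantities are bounded away from $0$ along a subsequence. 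In particular $I(u_n)=\bigl(\tfrac12-\tfrac{1}{2^\ast_s}\bigr)\|u_n\|_{D^{s,2}}^2+o(1) = I_\infty(u_n)+o(1)$.

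The second step is to apply Lemma \ref{yang} to extract the concentration profile. One checks that $u_n$ satisfies, in the sense of $H^{-s}$,
\begin{align*}
(-\Delta)^s u_n-|u_n|^{2^\ast_s-2}u_n=\chi_n,\qquad \chi_n:=-V(x)u_n-K(x)\phi^s_{u_n}u_n+r_n,
\end{align*}
where $r_n=I'(u_n)\to 0$ in $(D^{s,2})^\ast$. Testing against $\varphi\in C_0^\infty$ with support in a fixed bounded set and using the Hölder estimates of Section 2 (with $V\in L^{3/2s}$, $K\in L^{6/(6s-3)}$, $\phi^s_{u_n}$ bounded in $D^{s,2}$), together with the local compactness of the embedding $D^{s,2}\hookrightarrow L^p_{loc}$ for $p<2^\ast_s$ from Lemma \ref{Hardy}, one sees that $|\langle\chi_n,\varphi\rangle|\leq \varepsilon_n\|\varphi\|_{H^s}$ with $\varepsilon_n\to 0$. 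Since $\|u_n\|_{2^\ast_s}^{2^\ast_s}$ does not vanish, a vanishing-type argument (or the Lions lemma) produces a fixed cube $Q$ and $\gamma>0$ with $\int_Q|u_n|^{2^\ast_s}\geq\gamma$; after a preliminary rescaling one arranges also $\int_Q|(-\Delta)^s u_n|\geq\gamma$. Lemma \ref{yang} then gives $\{y_n\}\subset\mathbb{R}^3$, $\{\sigma_n\}$ with $\sigma_n\to0$, such that $v_n(x):=\sigma_n^{(3-2s)/2}u_n(\sigma_n x+y_n)\rightharpoonup v$ in $D^{s,2}(\mathbb{R}^3)$ with $v\not\equiv0$ a solution of (\ref{linear}).

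The third step is the energy and norm splitting. The $D^{s,2}$-norm and the $L^{2^\ast_s}$-norm are invariant under the rescaling $u\mapsto \sigma^{(3-2s)/2}u(\sigma\,\cdot+y)$, so $\|u_n\|_{D^{s,2}}=\|v_n\|_{D^{s,2}}$ and $\|u_n\|_{2^\ast_s}=\|v_n\|_{2^\ast_s}$. From $v_n\rightharpoonup v$ in $D^{s,2}$ the Brezis–Lieb lemma gives $\|v_n\|_{2^\ast_s}^{2^\ast_s}=\|v\|_{2^\ast_s}^{2^\ast_s}+\|v_n-v\|_{2^\ast_s}^{2^\ast_s}+o(1)$, and weak convergence gives $\|v_n\|_{D^{s,2}}^2=\|v\|_{D^{s,2}}^2+\|v_n-v\|_{D^{s,2}}^2+o(1)$, which together with the invariance is exactly (\ref{iun2}). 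Since the lower-order terms $\int V u_n^2$ and $N(u_n)$ are $o(1)$, we have $I(u_n)=\bigl(\tfrac12-\tfrac1{2^\ast_s}\bigr)\|u_n\|_{D^{s,2}}^2+o(1)$ and likewise $I_\infty(v)=\bigl(\tfrac12-\tfrac1{2^\ast_s}\bigr)\|v\|_{D^{s,2}}^2$ (because $v$ solves (\ref{linear})) and $I_\infty(v_n-v)=\tfrac12\|v_n-v\|_{D^{s,2}}^2-\tfrac1{2^\ast_s}\|v_n-v\|_{2^\ast_s}^{2^\ast_s}$; combining the three identities above yields (\ref{iun1}).

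\textbf{Main obstacle.} The delicate point is the verification that $|\langle\chi_n,\varphi\rangle|\leq\varepsilon_n\|\varphi\|_{H^s}$ \emph{uniformly in $\varphi$ supported in a fixed neighborhood of $Q$}: the terms $V(x)u_n$ and $K(x)\phi^s_{u_n}u_n$ are only controlled globally in weak norms, so one must exploit the \emph{local} strong convergence $u_n\to0$ in $L^p_{loc}$ (Lemma \ref{Hardy}) and split $K$ into a part small in $L^{6/(6s-3)}$ outside a large ball and a bounded part, exactly as in the proof of Lemma \ref{high}. A secondary subtlety is producing the cube $Q$ with a lower bound on \emph{both} $\int_Q|(-\Delta)^s u_n|$ and $\int_Q|u_n|^{2^\ast_s}$, which requires a rescaling normalization before invoking Lemma \ref{yang}.
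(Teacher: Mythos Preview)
Your proposal follows essentially the same concentration--compactness strategy as the paper: eliminate the lower-order $V$ and $K\phi$ terms using weak convergence, apply Lemma~\ref{yang} to extract a nontrivial profile, and obtain the splitting via Brezis--Lieb and the scale/translation invariance of $\|\cdot\|_{D^{s,2}}$ and $\|\cdot\|_{2^\ast_s}$. Two places where the paper is more concrete than your outline are worth noting. First, instead of verifying a \emph{local} $H^{-s}$ estimate on $\chi_n$, the paper proves the stronger global statement $I'(u_n)=I'_\infty(u_n)+o(1)$ in $(D^{s,2}(\mathbb{R}^3))^\ast$ directly, by showing $\int_{\mathbb{R}^3} V u_n h\,dx=o(1)\|h\|_{D^{s,2}}$ and $\int_{\mathbb{R}^3} K\phi^s_{u_n}u_n h\,dx=o(1)\|h\|_{D^{s,2}}$ via H\"older and weak convergence; since $I_\infty$ and $I'_\infty$ are invariant under the rescaling, this passes unchanged to the rescaled sequence and makes the hypothesis on $\chi_n$ in Lemma~\ref{yang} immediate. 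Second, the ``rescaling normalization before invoking Lemma~\ref{yang}'' that you allude to but do not specify is carried out in the paper by setting $\sigma_n=\|u_n\|_2^{1/s}$ (this is where the assumption $u_n\in C_0^\infty(\mathbb{R}^3)$ is used), so that $\tilde u_n(x)=\sigma_n^{(3-2s)/2}u_n(\sigma_n x)$ satisfies $\|\tilde u_n\|_2=1$; this places the sequence in $H^s(\mathbb{R}^3)$, which is a hypothesis of Lemma~\ref{yang} that your outline does not explicitly address.
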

\begin{proof}
Since $u_{n}\rightharpoonup 0 \ in \ D^{s,2}(\mathbb{R}^{3})$, by (\ref{vw}) and (2) of Lemma \ref{high}, we get
\begin{align}\label{get}
\int_{\mathbb{R}^{3}}V(x)u_{n}^{2}\rightarrow 0, \  \ N(u_{n})\rightarrow 0.
\end{align}
For any $h\in D^{s,2}(\mathbb{R}^{3})$, by the H\"{o}lder inequality,
\begin{align*}
\int_{\mathbb{R}^{3}}V(x)u_{n}hdx\leq \left(\int_{\mathbb{R}^{3}}(V(x)u_{n})^{\frac{6}{3+2s}} dx\right)^{\frac{3+2s}{6}}\left(\int_{\mathbb{R}^{3}}|h|^{2_{s}^{\ast}}dx\right)^{\frac{1}{2_{s}^{\ast}}}.
\end{align*}
From $u_{n}\rightharpoonup 0$ in $L^{2_{s}^{\ast}}(\mathbb{R}^{3})$, we have $u_{n}^{\frac{6}{3+2s}}\rightharpoonup 0$ in $L^{\frac{3+2s}{3-2s}}(\mathbb{R}^{3})$.
This together with $V^{\frac{6}{3+2s}} \in L^{\frac{3+2s}{4s}}(\mathbb{R}^{3})$ yield that
\begin{align*}
\int_{\mathbb{R}^{3}}V^{\frac{6}{3+2s}}u_{n}^{\frac{6}{3+2s}}dx\rightarrow 0.
\end{align*}
Since $D^{s,2}(\mathbb{R}^{3})\hookrightarrow L^{2_{s}^{\ast}}(\mathbb{R}^{3})$, then $\|h\|_{2_{s}^{\ast}}\leq C\|h\|_{D^{s,2}}$. So
\begin{align}\label{weak}
\int_{\mathbb{R}^{3}}V(x)u_{n}hdx=o(1)\|h\|_{D^{s,2}}.
\end{align}
By the similar argument as that in the proof of (2) in Lemma \ref{high}, we have
\begin{align}\label{strong}
\int_{\mathbb{R}^{3}}K(x)\phi_{u_{n}}^{s}u_{n}hdx=o(1)\|h\|_{D^{s,2}}.
\end{align}
Therefore
\begin{align}\label{therefore1}
I(u_{n})=I_{\infty}(u_{n})+o(1),
\end{align}
\begin{align}\label{therefore2}
I'(u_{n})&=I'_{\infty}(u_{n})+o(1)=o(1).
\end{align}
Put $\sigma_{n}=\|u_{n}\|_{2}^{1/s}$ and define
\begin{align*}
\tilde{u}_{n}(x)=\sigma_{n}^{\frac{3-2s}{2}}u_{n}(\sigma_{n}x).
\end{align*}
From Proposition 3.6 in \cite{DPV}, up to a positive constant, it follows that
\begin{align}\label{equivalent}
\|(-\Delta)^{\frac{s}{2}}u\|_{2}^{2}=\int_{\mathbb{R}^{3}}\int_{\mathbb{R}^{3}}\frac{|u(x)-u(y)|^{2}}{|x-y|^{3+2s}}dxdy.
\end{align}
Then we can get
\begin{align}\label{two norm}
\|\tilde{u}_{n}\|_{D^{s,2}}=\|u_{n}\|_{D^{s,2}},  \ \|\tilde{u}_{n}\|_{2_{s}^{\ast}}=\|u_{n}\|_{2_{s}^{\ast}}.
\end{align}
By the invariance of these two norms with respect to rescaling $u\rightarrow r^{\frac{3-2s}{2}}u(r\cdot)$, note that (\ref{therefore2}), we have
\begin{align}\label{recaling}
I_{\infty}(\tilde{u}_{n})=I_{\infty}(u_{n}), \ I'_{\infty}(\tilde{u}_{n})=I'_{\infty}(u_{n})=o(1).
\end{align}
By the scale change,
\begin{align}\label{one}
\|\tilde{u}_{n}\|_{2}=1.
\end{align}
If $\tilde{u}_{n}\rightharpoonup \tilde{u}\neq 0$ in $D^{s,2}(\mathbb{R}^{3})$, we are done: the wanted functions $v_{n}$ and $v$ are precisely $\tilde{u}_{n}$ and $\tilde{u}$, and $y_{n}=0$. In fact, since $\tilde{u}_{n}\rightharpoonup \tilde{u}$ in $D^{s,2}(\mathbb{R}^{3})$, by the Br$\acute{\mbox{e}}$zis-Lieb Lemma \cite{Willem},
\begin{align}\label{Brezis}
I_{\infty}(\tilde{u}_{n}-\tilde{u})=I_{\infty}(\tilde{u}_{n})-I_{\infty}(\tilde{u})+o(1).
\end{align}
From (\ref{therefore1}), (\ref{therefore2}), (\ref{recaling}) and (\ref{Brezis}), it follows that
\begin{align}\label{that}
I(u_{n})=I_{\infty}(\tilde{u})+I_{\infty}(\tilde{u}_{n}-\tilde{u})+o(1).
\end{align}
By (\ref{two norm}),
\begin{align*}
\|u_{n}\|_{D^{s,2}}^{2}=\|\tilde{u}_{n}\|_{D^{s,2}}^{2}=\|\tilde{u}\|_{D^{s,2}}^{2}+\|\tilde{u}_{n}-\tilde{u}\|_{D^{s,2}}^{2}+o(1).
\end{align*}

If $\tilde{u}_{n}\rightharpoonup 0$ in $D^{s,2}(\mathbb{R}^{3})$. We cover $\mathbb{R}^{3}$ with balls of radius 1 in such a way that each point of $\mathbb{R}^{3}$ is contained in at most 4 balls. Set
\begin{align*}
d_{n}=\sup_{B_{1}}|\tilde{u}_{n}|_{L^{2_{s}^{\ast}}(B_{1})}=\sup_{x\in \mathbb{R}^{3}}\left(\int_{B_{1}(x)}|\tilde{u}_{n}|^{2_{s}^{\ast}}\right)^{\frac{1}{2_{s}^{\ast}}}.
\end{align*}
We claim that there exists $\gamma>0$ such that
\begin{align}\label{dn}
d_{n}\geq \gamma>0, \ \forall \ n\in \mathbb{N}.
\end{align}
Otherwise, by Lemma 2.2 in \cite{DSS}, we have
\begin{align}\label{ast}
\tilde{u}_{n}\rightarrow 0 \ \ \mbox{in} \ \ L^{2_{s}^{\ast}}(\mathbb{R}^{3}).
\end{align}
By (\ref{therefore2}), $\langle I'_{\infty}(\tilde{u}_{n}), \tilde{u}_{n}\rangle=o(1)$. This together with (\ref{ast}) yield that
\begin{align*}
\|\tilde{u}_{n}\|_{D^{s,2}}^{2}=\|\tilde{u}_{n}\|_{2_{s}^{\ast}}^{2_{s}^{\ast}}+o(1)=o(1).
\end{align*}
Then by (\ref{two norm}),  we have $\|u_{n}\|_{D^{s,2}}=o(1)$. This contradicts $u_{n}\nrightarrow 0$  strongly in $D^{s,2}(\mathbb{R}^{3})$.

We choose $\tilde{y}_{n}$ such that
\begin{align}\label{yn}
\int_{B_{1}(\tilde{y}_{n})}|\tilde{u}_{n}|^{2_{s}^{\ast}}dx\geq d_{n}-\frac{1}{n}
\end{align}
and let
\begin{align}\label{wn}
w_{n}=\tilde{u}_{n}(x+\tilde{y}_{n}).
\end{align}
By (\ref{dn}), (\ref{yn}) and (\ref{wn}),
\begin{align}\label{wngamma}
\int_{B_{1}(0)}|w_{n}|^{2_{s}^{\ast}}dx\geq \frac{\gamma}{2}.
\end{align}
If $w_{n}\rightharpoonup w\neq 0$ in $D^{s,2}(\mathbb{R}^{3})$, we are done: the wanted functions $v_{n}$ and $v$ are precisely $w_{n}$ and $w$, and $y_{n}=\sigma_{n}\tilde{y}_{n}$.
Suppose that $w_{n}\rightharpoonup 0$ in $D^{s,2}(\mathbb{R}^{3})$. By Lemma \ref{Hardy}, $w_{n}\rightarrow 0$ in $L_{loc}^{2}(\mathbb{R}^{3})$. Then by (\ref{wngamma}), we have
\begin{align*}
\int_{B_{1}(0)}|(-\Delta)^{\frac{s}{2}}w_{n}|^{2}dx&=\|w_{n}\|_{H^{s}(B_{1}(0))}^{2}-\|w_{n}\|_{L^{2}(B_{1}(0))}^{2}\geq C\|w_{n}\|_{L^{2_{s}^{\ast}}(B_{1}(0))}^{2}+o(1)\\
&\geq C\left(\frac{\gamma}{2}\right)^{\frac{2}{2_{s}^{\ast}}}+o(1)>0.
\end{align*}
By (\ref{recaling}), (\ref{wn}), the invariance of the $H^{s}(\mathbb{R}^{3})$ and $L^{2_{s}^{\ast}}(\mathbb{R}^{3})$ norms by translation, we apply Lemma \ref{yang} to $w_{n}$ and to claim that
there exists a sequence $\eta_{n}\rightarrow 0$ of numbers and a sequence $y_{n}$ of points: $y_{n}\rightarrow \bar{y}\in \overline{B_{1}(0)}$ such that
\begin{align*}
v_{n}=\eta_{n}^{\frac{3-2s}{2}}w_{n}(\eta_{n}x+y_{n})\rightharpoonup v\neq 0.
\end{align*}
By (\ref{two norm}), it is easy to see that
\begin{align}\label{vu}
\|v_{n}\|_{D^{s,2}}=\|u_{n}\|_{D^{s,2}}, \ \|v_{n}\|_{2_{s}^{\ast}}=\|u_{n}\|_{2_{s}^{\ast}}.
\end{align}
Since $v_{n}\rightharpoonup v$ in $D^{s,2}(\mathbb{R}^{3})$, by the Br$\acute{\mbox{e}}$zis-Lieb Lemma \cite{Willem}, (\ref{therefore1}), (\ref{therefore2}) and (\ref{vu}), we have
\begin{align*}
I(u_{n})&=I_{\infty}(u_{n})+o(1)=I_{\infty}(v_{n})+o(1)=I_{\infty}(v)+I_{\infty}(v_{n}-v)+o(1),\\
\|u_{n}\|_{D^{s,2}}^{2}&=\|v_{n}\|_{D^{s,2}}^{2}=\|v\|_{D^{s,2}}^{2}+\|v_{n}-v\|_{D^{s,2}}^{2}+o(1).
\end{align*}
So $v_{n}$ and $v$ are the wanted functions.
\end{proof}
\begin{theorem}\label{compactness}
Suppose that $V\in L^{\frac{3}{2s}}(\mathbb{R}^{3})$, $V\geq 0$ and $K\in L^{\frac{6}{6s-3}}(\mathbb{R}^{3})$, $K\geq 0$.
Let $\{u_{n}\}$ be a Palsis-Smale sequence of $I$ at level $c$. Then up to a subsequence, $u_{n}\rightharpoonup \bar{u}$. Moreover, there exists a number $k\in \mathbb{N}$, $k$ sequences of points $\{y_{n}^{i}\}_{n}\subset \mathbb{R}^{3}$, $1\leq i\leq k$, $k$ sequences of positive numbers $\{\sigma_{n}^{i}\}_{n}$, $1\leq i\leq k$, $k$ sequences of functions $\{u_{n}^{i}\}\subset D^{s,2}(\mathbb{R}^{3})$, $1\leq i\leq k$, such that
\begin{align*}
u_{n}(x)-\bar{u}(x)-\sum_{i=1}^{k}\frac{1}{(\sigma_{n}^{i})^{\frac{3-2s}{2}}}u^{i}\left(\frac{x-y_{n}^{i}}{\sigma_{n}^{i}}\right)\rightarrow 0,
\end{align*}
where $u^{i}$, $1\leq i \leq k$, are solutions of (\ref{linear}). Moreover as $n\rightarrow \infty$,
\begin{align*}
\|u_{n}\|_{D^{s,2}}^{2}\rightarrow \|\bar{u}\|_{D^{s,2}}^{2}+\sum_{i=1}^{k}\|u^{i}\|_{D^{s,2}}^{2}
\end{align*}
\begin{align*}
I(u_{n})\rightarrow I(\bar{u})+\sum_{i=1}^{k}I_{\infty}(u^{i}).
\end{align*}
\end{theorem}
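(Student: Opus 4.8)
The plan is to prove Theorem \ref{compactness} by the classical iterative ``bubbling'' scheme (Struwe's global compactness lemma), with Lemma \ref{unps} playing the role of the extraction device at every stage.

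\emph{Step 1: boundedness and the first weak limit.} From $I(u_n)=c+o(1)$ and $\langle I'(u_n),u_n\rangle=o(1)\|u_n\|_{D^{s,2}}$ one forms $I(u_n)-\frac{1}{2_{s}^{\ast}}\langle I'(u_n),u_n\rangle$; since $V\geq0$, $K\geq0$ and $s>\frac34$, the coefficients $\frac12-\frac{1}{2_{s}^{\ast}}=\frac{s}{3}$ and $\frac14-\frac{1}{2_{s}^{\ast}}=\frac{4s-3}{12}$ are positive, so $\|u_n\|_{D^{s,2}}^{2}+\int_{\mathbb{R}^{3}}V(x)u_n^{2}dx$ is bounded. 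Hence, up to a subsequence, $u_n\rightharpoonup\bar u$ in $D^{s,2}(\mathbb{R}^{3})$, $u_n\to\bar u$ in $L^{p}_{loc}$ for $p\in[2,2_{s}^{\ast})$ by Lemma \ref{Hardy}, and $u_n\to\bar u$ a.e. Passing to the limit in $\langle I'(u_n),\varphi\rangle=o(1)$ for $\varphi\in C_{0}^{\infty}(\mathbb{R}^{3})$, using Lemma \ref{high}(2) for the Hartree term and (\ref{weak})--(\ref{strong}) for the $V$- and $K\phi$-contributions, shows that $\bar u$ is a weak solution of (\ref{fractional Schrodinger}); in particular $I'(\bar u)=0$, and the identity $\langle I'(\bar u),\bar u\rangle=0$ together with the above positivity of the coefficients also gives $I(\bar u)\geq0$.

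\emph{Step 2: reduction to the limiting functional.} Put $u_n^{1}:=u_n-\bar u$, so $u_n^{1}\rightharpoonup0$ in $D^{s,2}(\mathbb{R}^{3})$. By the Brezis--Lieb lemma for the $L^{2_{s}^{\ast}}$-term, the weak-to-zero behaviour of $\int_{\mathbb{R}^{3}}V(u_n^1)^2$, $N(u_n^1)$ and their linearizations from Lemma \ref{high}(2) and (\ref{weak})--(\ref{strong}), and the Hilbert-space splitting $\|u_n\|_{D^{s,2}}^{2}=\|\bar u\|_{D^{s,2}}^{2}+\|u_n^{1}\|_{D^{s,2}}^{2}+o(1)$, one obtains
\[
I(u_n)=I(\bar u)+I_{\infty}(u_n^{1})+o(1),\qquad I'_{\infty}(u_n^{1})\to0 \ \text{in } (D^{s,2}(\mathbb{R}^{3}))^{*}.
\]
Thus $\{u_n^{1}\}$ is a Palais--Smale sequence for $I_{\infty}$ with $u_n^{1}\rightharpoonup0$. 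If $u_n^{1}\to0$ strongly we stop with $k=0$. Otherwise Lemma \ref{unps} (after the routine density reduction to $C_{0}^{\infty}$-sequences) produces $y_n^{1}\in\mathbb{R}^{3}$, $\sigma_n^{1}>0$ with $v_n^{1}(x):=(\sigma_n^{1})^{\frac{3-2s}{2}}u_n^{1}(\sigma_n^{1}x+y_n^{1})\rightharpoonup u^{1}\neq0$, where $u^{1}$ solves (\ref{linear}), and
\[
I_{\infty}(u_n^{1})=I_{\infty}(u^{1})+I_{\infty}(v_n^{1}-u^{1})+o(1),\qquad \|u_n^{1}\|_{D^{s,2}}^{2}=\|u^{1}\|_{D^{s,2}}^{2}+\|v_n^{1}-u^{1}\|_{D^{s,2}}^{2}+o(1).
\]

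\emph{Step 3: iteration and termination.} The sequence $v_n^{1}-u^{1}$ is again a Palais--Smale sequence for $I_{\infty}$, now at the strictly lower level $c-I(\bar u)-I_{\infty}(u^{1})$, and it converges weakly to $0$; since $I_{\infty}$ carries no lower-order terms, Lemma \ref{unps} applies verbatim and we repeat. Unwinding the rescalings, after $k$ steps
\[
u_n(x)-\bar u(x)-\sum_{i=1}^{k}(\sigma_n^{i})^{-\frac{3-2s}{2}}u^{i}\!\left(\frac{x-y_n^{i}}{\sigma_n^{i}}\right)\rightharpoonup0,
\]
with the norm and energy identities accumulating additively; that the profiles decouple (the parameters $(\sigma_n^{i},y_n^{i})$ separating across scales) is forced at each step by the weak-to-zero convergence of the residual in the previous frame. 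Finally, every nontrivial solution $u^{i}$ of (\ref{linear}) satisfies $\|u^{i}\|_{D^{s,2}}^{2}=\|u^{i}\|_{2_{s}^{\ast}}^{2_{s}^{\ast}}\geq S_{s}^{\frac{3}{2s}}$ by definition of $S_s$, hence $I_{\infty}(u^{i})=\frac{s}{3}\|u^{i}\|_{D^{s,2}}^{2}\geq\frac{s}{3}S_{s}^{\frac{3}{2s}}>0$; since every Palais--Smale sequence of $I_{\infty}$ has nonnegative energy in the limit and $I(\bar u)\geq0$, one gets $\sum_{i\geq1}I_{\infty}(u^{i})\leq c$, so the process stops after finitely many steps, $k\leq \tfrac{3c}{sS_{s}^{3/(2s)}}$. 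At the terminating stage the residual goes to $0$ strongly, which upgrades the weak convergence above to the strong convergence in the statement and yields the asserted norm and energy decompositions.

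\emph{The main obstacle.} The delicate point is Step 2 and its repetition inside Step 3: one must verify that peeling off $\bar u$ (and, later, each bubble $u^{i}$) leaves a genuine Palais--Smale sequence for $I_{\infty}$. Concretely, along any sequence converging weakly to $0$ in $D^{s,2}(\mathbb{R}^{3})$ the quantities $\int_{\mathbb{R}^{3}}V(x)u_n^{2}dx$, $\int_{\mathbb{R}^{3}}V(x)u_nh\,dx$, $N(u_n)$ and $\int_{\mathbb{R}^{3}}K(x)\phi_{u_n}^{s}u_nh\,dx$ must all be $o(1)$ (uniformly in $\|h\|_{D^{s,2}}\leq1$ for the linear ones); this is exactly where the hypotheses $V\in L^{3/2s}$, $K\in L^{6/(6s-3)}$, Lemma \ref{high}(2), and the H\"older/Hardy estimates of Section 2 enter, and, because of the critical exponent, the nonlinear term is controlled via Brezis--Lieb rather than by any compact embedding.
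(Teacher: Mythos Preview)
Your proposal is correct and follows essentially the same route as the paper: boundedness of the Palais--Smale sequence, identification of the weak limit $\bar u$ as a critical point, reduction of the residual $u_n-\bar u$ to a Palais--Smale sequence for the limiting problem, density approximation by $C_0^\infty$ functions, and iterated application of Lemma \ref{unps} with termination via the energy quantization $\|u^i\|_{D^{s,2}}^2\ge S_s^{3/(2s)}$. The only cosmetic difference is that the paper first verifies that $z_n^1=u_n-\bar u$ is a Palais--Smale sequence for $I$ itself (invoking $\bar u\in L^\infty$ via elliptic regularity to justify the nonlinear Brezis--Lieb splitting (\ref{invariance})), while you pass directly to $I_\infty$; the two are equivalent because the $V$- and $K\phi$-contributions vanish along weakly null sequences.
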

\begin{proof}
Since $u_{n}$ is a Palais-Smale sequence for $I$, it is easy to show that $u_{n}$ is bounded in $D^{s,2}(\mathbb{R}^{3})$ and in $L^{2_{s}^{\ast}}(\mathbb{R}^{3})$. Up to a subsequence, we assume that $u_{n}\rightharpoonup \bar{u}$ in $D^{s,2}(\mathbb{R}^{3})$ and in $L^{2_{s}^{\ast}}(\mathbb{R}^{3})$ as $n\rightarrow \infty$. By the weak convergence, we have $I'(\bar{u})=0$. Note that $V\in L^{\frac{3}{2s}}(\mathbb{R}^{3})$, $V\geq 0$ and $K\in L^{\frac{6}{6s-3}}(\mathbb{R}^{3})$, $K\geq 0$, by Proposition 5.1.1 in \cite{Dipierro} and Lemma 2.7 in \cite{Teng}, we have $\bar{u}\in L^{\infty}(\mathbb{R}^{3})$. Set $z_{n}^{1}=u_{n}-\bar{u}$, then $z_{n}^{1}\rightharpoonup 0$ in $D^{s,2}(\mathbb{R}^{3})$. If $z_{n}^{1}\rightarrow 0$ in $D^{s,2}(\mathbb{R}^{3})$, we are done. Suppose that $z_{n}^{1}=u_{n}-\bar{u}\nrightarrow 0$ in $D^{s,2}(\mathbb{R}^{3})$, for any $v\in C_{0}^{\infty}(\mathbb{R}^{3})$,
\begin{align}\label{zh}
\langle I'(z_{n}^{1}),v\rangle&=\int_{\mathbb{R}^{3}}(-\Delta)^{\frac{s}{2}}z_{n}^{1}(-\Delta)^{\frac{s}{2}}vdx+\int_{\mathbb{R}^{3}}V(x)z_{n}^{1}vdx
+\int_{\mathbb{R}^{3}}K(x)\phi_{z_{n}^{1}}^{s}z_{n}^{1}vdx\nonumber\\
&\quad-\int_{\mathbb{R}^{3}}|z_{n}^{1}|^{2_{s}^{\ast}-2}z_{n}^{1}vdx\nonumber\\
&=\langle I'(u_{n}), v\rangle-\langle I'(u), v\rangle+\int_{\mathbb{R}^{3}}K(x)(\phi_{z_{n}^{1}}^{s}z_{n}^{1}+\phi_{\bar{u}}^{s}\bar{u}-\phi_{u_{n}^{s}}^{s}u_{n})vdx\nonumber\\
&\quad-\int_{\mathbb{R}^{3}}(|z_{n}^{1}|^{2_{s}^{\ast}-2}z_{n}^{1}+|\bar{u}|^{2_{s}^{\ast}-2}\bar{u}-|u_{n}|^{2_{s}^{\ast}-2}u_{n})vdx.
\end{align}
Since $\bar{u}\in L^{\infty}(\mathbb{R}^{3})$, by arguments similar to those of Lemma 8.9 in \cite{Willem}, we get
\begin{align}\label{invariance}
|u_{n}-\bar{u}|^{2_{s}^{\ast}-2}(u_{n}-u)+|\bar{u}|^{2_{s}^{\ast}-2}\bar{u}-|u_{n}|^{2_{s}^{\ast}-2}u_{n}\rightarrow 0 \ \mbox{in}\ (D^{s,2}(\mathbb{R}^{3}))'.
\end{align}
Since $u_{n}\rightharpoonup \bar{u}$ in $D^{s,2}(\mathbb{R}^{3})$, it is easy to see that
\begin{align*}
&\int_{\mathbb{R}^{3}}K(x)\phi_{z_{n}^{1}}^{s}z_{n}^{1}vdx=o(1)\|v\|_{D^{s,2}},\\
&\int_{\mathbb{R}^{3}}K(x)\phi_{\bar{u}}^{s}\bar{u}-\phi_{u_{n}^{s}}^{s}u_{n})vdx=o(1)\|v\|_{D^{s,2}}.
\end{align*}
Then
\begin{align}\label{k}
\int_{\mathbb{R}^{3}}K(x)(\phi_{z_{n}^{1}}^{s}z_{n}^{1}vdx+\phi_{\bar{u}}^{s}\bar{u}-\phi_{u_{n}^{s}}^{s}u_{n})vdx=o(1)\|v\|_{D^{s,2}}.
\end{align}
From (\ref{zh}), (\ref{invariance}) and (\ref{k}), it follows that
$\langle I'(z_{n}^{1}),v\rangle=o(1)v$, then $z_{n}^{1}$ is a Palais-Smale sequence for $I$.\\
By the definition of $D^{s,2}(\mathbb{R}^{3})$, for any $n$, there exists $\tilde{z}_{n}^{1}\in C_{0}^{\infty}(\mathbb{R}^{3})$ such that
\begin{align*}
\|z_{n}^{1}-\tilde{z}_{n}^{1}\|_{D^{s,2}}< \frac{1}{n} \ \ \mbox{and} \ \ \|I'(z_{n}^{1})-I'(\tilde{z}_{n}^{1})\|_{D^{s,2}}< \frac{1}{n}.
\end{align*}
Thus  $\bar{z}_{n}^{1}$ is a Palais-Smale sequence, and
\begin{align}\label{strongly}
z_{n}^{1}-\tilde{z}_{n}^{1}\rightarrow 0 \ \ \mbox{in} \ D^{s,2}(\mathbb{R}^{3}),
\end{align}
\begin{align}\label{weakly}
\tilde{z}_{n}^{1}\rightharpoonup 0  \ \ \mbox{in} \ D^{s,2}(\mathbb{R}^{3}).
\end{align}
By a similar argument to that in proof of Lemma \ref{high}, we get that as $n\rightarrow \infty$,
\begin{align}\label{verify}
\left\{\begin{aligned}
&\int_{\mathbb{R}^{3}}K(x)\tilde{z}_{n}^{1}(x)(z_{n}^{1}-\tilde{z}_{n}^{1})(x)dx\rightarrow 0\\
&K(x)\bar{u}(x)\tilde{z}_{n}^{1}(x)dx\rightarrow 0\\
&K(x)\bar{u}(x)(z_{n}^{1}-\tilde{z}_{n}^{1})(x)dx\rightarrow 0.
\end{aligned}\right.
\end{align}
By (\ref{strongly}), (\ref{weakly}), (\ref{verify}) and the Br$\acute{\mbox{e}}$zis-Lieb Lemma, we conclude that
\begin{align}\label{conclude1}
\|\tilde{z}_{n}^{1}\|_{D^{s,2}}^{2}&=\|u_{n}\|_{D^{s,2}}^{2}-\|\tilde{u}\|_{D^{s,2}}^{2}+o(1),
\end{align}
\begin{align}\label{conclude2}
I(\tilde{z}_{n}^{1})&=I(u_{n})-I(\bar{u})+o(1),
\end{align}
\begin{align}\label{conclude3}
I'(\tilde{z}_{n}^{1})&=I'(u_{n})-I'(\bar{u})+o(1).
\end{align}
Then $\tilde{z}_{n}^{1}$ is a Palais-Smale sequence for $I$. Since $z_{n}^{1}\rightharpoonup 0$ in $D^{s,2}(\mathbb{R}^{3})$ and $z_{n}^{1}\nrightarrow 0$, we have that
$\tilde{z}_{n}^{1}\rightharpoonup 0$ in $D^{s,2}(\mathbb{R}^{3})$ and $\tilde{z}_{n}^{1}\nrightarrow 0$. Thus $\tilde{z}_{n}^{1}$ satisfies the assumptions of Lemma \ref{unps}, so there exist a sequence of points $\{x_{n}^{1}\}$ and a sequence of positive numbers $\eta_{n}^{1}$ such that
\begin{align*}
v_{n}^{1}(x)=(\eta_{n}^{1})^{\frac{3-2s}{2}}\tilde{z}_{n}^{1}(\eta_{n}^{1}x+x_{n}^{1})
\end{align*}
converges weakly in $D^{s,2}(\mathbb{R}^{3})$ to a nontrival solution $u^{1}$ of (\ref{linear}) and by (\ref{iun1}), (\ref{iun2}), (\ref{conclude1}) and (\ref{conclude2}), we have
\begin{align}\label{vnuz1}
I_{\infty}(v_{n}^{1}-u^{1})=I(\tilde{z}_{n}^{1})-I_{\infty}(u^{1})+o(1)=I(u_{n})-I(\bar{u})-I_{\infty}(u^{1})+o(1)
\end{align}
\begin{align}\label{vnuz2}
\|v_{m}^{1}-u^{1}\|_{D^{s,2}}^{2}=\|\tilde{z}_{n}^{1}\|_{D^{s,2}}^{2}-\|u^{1}\|_{D^{s,2}}^{2}+o(1)=\|u_{n}\|_{D^{s,2}}^{2}-\|\bar{u}\|_{D^{s,2}}^{2}-\|u^{1}\|_{D^{s,2}}^{2}+o(1).
\end{align}
Iterating this procedure, we get Palais-Smale sequences of functions
\begin{align*}
z_{n}^{j}=v_{n}^{j-1}-u^{j-1}, \ z_{n}^{j}\rightharpoonup 0 \ \mbox{in}  \ D^{s,2}(\mathbb{R}^{3}),
\end{align*}
and
\begin{align*}
\tilde{z}_{n}^{j}\in C_{0}^{\infty}(\mathbb{R}^{3}), \ \tilde{z}_{n}^{j}\rightharpoonup 0 \ \mbox{in}  \ D^{s,2}(\mathbb{R}^{3})
\end{align*}
such that
$z_{n}^{j}=\tilde{z}_{n}^{j}+(z_{n}^{j}-\tilde{z}_{n}^{j})$ and
\begin{align}\label{znj}
z_{n}^{j}-\tilde{z}_{n}^{j}\rightarrow 0 \ \ \mbox{in} \ \ D^{s,2}(\mathbb{R}^{3}).
\end{align}
We can obtain sequences of points $\{x_{n}\}^{j}\subset \mathbb{R}^{3}$ and sequences of numbers $\eta_{n}^{j}$ such that
\begin{align*}
v_{n}^{j}(x)=(\eta_{n}^{j})^{\frac{3-2s}{2}}\tilde{z}_{n}^{j}(\eta_{n}^{j}x+x_{n}^{j})
\end{align*}
converges weakly in $D^{s,2}(\mathbb{R}^{3})$ to a nontrivial solution $u^{j}$ of (\ref{linear}).
Furthermore, by (\ref{vnuz1}), (\ref{vnuz2}) and (\ref{znj}), we have
\begin{align*}
I_{\infty}(v_{n}^{j})&=I_{\infty}(\tilde{z}_{n}^{j})=I_{\infty}(v_{n}^{j-1}-u^{j-1})+o(1)\\
&=I(u_{n})-I(\bar{u})-\sum_{i=1}^{j-1}I_{\infty}(u^{i})+o(1)
\end{align*}
and
\begin{align}\label{vnjd}
\|v_{n}^{j}\|_{D^{s,2}}^{2}&=\|\tilde{z}_{n}^{j}\|_{D^{s,2}}^{2}=\|v_{n}^{j-1}-u^{j-1}\|_{D^{s,2}}^{2}+o(1)=\|v_{n}\|_{D^{s,2}}^{2}-\|u^{j-1}\|_{D^{s,2}}^{2}+o(1)\nonumber\\
&=\|u_{n}\|_{D^{s,2}}^{2}-\|\bar{u}\|_{D^{s,2}}^{2}-\sum_{i=1}^{j-1}\|u^{i}\|_{D^{s,2}}^{2}+o(1).
\end{align}
By the definition of $S_{s}$, we have $\|u^{j}\|_{D^{s,2}}^{2}\geq S_{s}\|u^{j}\|_{2_{s}^{\ast}}^{2}$. Since $u^{j}$ is a solution of (\ref{linear}), then we have
\begin{align}\label{ujd}
\|u^{j}\|_{D^{s,2}}^{2}\geq S_{s}^{\frac{3}{2s}}.
\end{align}
This together with the boundedness of $u_{n}$ and (\ref{vnjd}) yield that the iteration must terminate at some $k>0$. The proof is complete.
\end{proof}
\begin{corollary}\label{und}
Assume that $\{u_{n}\}\subset D^{s,2}(\mathbb{R}^{3})$ satisfies the assumption of the Theorem \ref{compactness} with $c\in (\frac{s}{3}S_{s}^{\frac{3}{2s}}, \frac{2s}{3}S_{s}^{\frac{3}{2s}})$, then
$\{u_{n}\}$ contains a subsequence strongly convergent in $D^{s,2}(\mathbb{R}^{3})$.
\end{corollary}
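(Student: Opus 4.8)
The plan is to feed the Palais--Smale sequence into the concentration--compactness decomposition of Theorem \ref{compactness} and then use the two-sided bound on $c$ to kill every bubble. So let $\{u_{n}\}$ be a Palais--Smale sequence for $I$ at level $c\in\big(\frac{s}{3}S_{s}^{3/2s},\frac{2s}{3}S_{s}^{3/2s}\big)$. By Theorem \ref{compactness}, after passing to a subsequence, $u_{n}\rightharpoonup\bar{u}$ in $D^{s,2}(\mathbb{R}^{3})$ with $I'(\bar{u})=0$, and there are finitely many bubbles $u^{1},\dots,u^{k}$, each a nontrivial solution of (\ref{linear}), such that
\[
\|u_{n}\|_{D^{s,2}}^{2}\to\|\bar{u}\|_{D^{s,2}}^{2}+\sum_{i=1}^{k}\|u^{i}\|_{D^{s,2}}^{2},\qquad I(u_{n})\to I(\bar{u})+\sum_{i=1}^{k}I_{\infty}(u^{i}).
\]
Since $D^{s,2}(\mathbb{R}^{3})$ is a Hilbert space, it suffices to prove $k=0$: then $u_{n}\rightharpoonup\bar{u}$ together with $\|u_{n}\|_{D^{s,2}}\to\|\bar{u}\|_{D^{s,2}}$ forces $u_{n}\to\bar{u}$ strongly.

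First I would record two elementary lower bounds. Since $s>\frac{3}{4}$ gives $2_{s}^{\ast}>4$, both $\frac{1}{2}-\frac{1}{2_{s}^{\ast}}$ and $\frac{1}{4}-\frac{1}{2_{s}^{\ast}}$ are positive; as $V\ge0$ and $N(\bar{u})=\int_{\mathbb{R}^{3}}K(x)\phi^{s}_{\bar{u}}\bar{u}^{2}\,dx\ge0$ (because $K\ge0$ and $\phi^{s}_{\bar{u}}\ge0$ by the Riesz formula (\ref{Riesz})), the identity $I(\bar{u})=I(\bar{u})-\frac{1}{2_{s}^{\ast}}\langle I'(\bar{u}),\bar{u}\rangle=(\frac{1}{2}-\frac{1}{2_{s}^{\ast}})(\|\bar{u}\|_{D^{s,2}}^{2}+\int_{\mathbb{R}^{3}}V\bar{u}^{2}\,dx)+(\frac{1}{4}-\frac{1}{2_{s}^{\ast}})N(\bar{u})$ yields $I(\bar{u})\ge0$. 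Also, every $u^{i}$ lies on $\mathcal{N}_{\infty}$, so $I_{\infty}(u^{i})=\frac{s}{3}\|u^{i}\|_{D^{s,2}}^{2}$, and $\|u^{i}\|_{D^{s,2}}^{2}\ge S_{s}^{3/2s}$ by (\ref{ujd}) (or directly from the definition of $S_{s}$); the same computation gives $m_{\infty}=\frac{s}{3}S_{s}^{3/2s}$, hence $m=\frac{s}{3}S_{s}^{3/2s}$ by Lemma \ref{minimization}.

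Now I would eliminate the bubbles. If $k\ge2$, then $c\ge0+2\cdot\frac{s}{3}S_{s}^{3/2s}=\frac{2s}{3}S_{s}^{3/2s}$, contradicting $c<\frac{2s}{3}S_{s}^{3/2s}$; so $k\le1$. Suppose $k=1$. If $\bar{u}\neq0$ then $\bar{u}\in\mathcal{N}$, so $I(\bar{u})\ge m=\frac{s}{3}S_{s}^{3/2s}$ and $c=I(\bar{u})+I_{\infty}(u^{1})\ge\frac{2s}{3}S_{s}^{3/2s}$, again impossible. If $\bar{u}=0$ then $c=I_{\infty}(u^{1})$ with $u^{1}$ a nontrivial solution of (\ref{linear}). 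If $u^{1}$ has constant sign, then one of $\pm u^{1}$ is a positive solution, so by (\ref{form}) it is an Aubin--Talenti bubble and $I_{\infty}(u^{1})=\frac{s}{3}S_{s}^{3/2s}$, contradicting $c>\frac{s}{3}S_{s}^{3/2s}$. If $u^{1}$ changes sign, I would test $(-\Delta)^{s}u^{1}=|u^{1}|^{2_{s}^{\ast}-2}u^{1}$ against $(u^{1})^{\pm}$: since $(u^{1})^{+}(u^{1})^{-}\equiv0$, the integrand in the Gagliardo expression for $\langle(-\Delta)^{s}(u^{1})^{+},(u^{1})^{-}\rangle$ is pointwise $\le0$, and this yields $\|(u^{1})^{\pm}\|_{D^{s,2}}^{2}\ge S_{s}^{3/2s}$ and $\|u^{1}\|_{D^{s,2}}^{2}\ge\|(u^{1})^{+}\|_{D^{s,2}}^{2}+\|(u^{1})^{-}\|_{D^{s,2}}^{2}\ge2S_{s}^{3/2s}$, so $c=I_{\infty}(u^{1})=\frac{s}{3}\|u^{1}\|_{D^{s,2}}^{2}\ge\frac{2s}{3}S_{s}^{3/2s}$, contradicting $c<\frac{2s}{3}S_{s}^{3/2s}$. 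Hence $k=0$ and the proof is complete.

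The only genuinely delicate point, and the one step not already packaged in the previous results, is the last case: one needs the energy gap for the critical equation (\ref{linear}) --- a nontrivial solution has energy either exactly $\frac{s}{3}S_{s}^{3/2s}$ (constant-sign bubbles) or at least $\frac{2s}{3}S_{s}^{3/2s}$ (sign-changing solutions) --- which rests on the nonlocal monotonicity inequality $\langle(-\Delta)^{s}(u^{1})^{+},(u^{1})^{-}\rangle\le0$ together with the classification (\ref{form}). Everything else is bookkeeping with Theorem \ref{compactness} and Lemma \ref{minimization}.
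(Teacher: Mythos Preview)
Your argument is correct and follows the same route as the paper: apply Theorem \ref{compactness}, then use $I(\bar{u})\ge 0$ (and $I(\bar{u})\ge m$ if $\bar{u}\neq 0$) together with the energy quantization for the limiting problem (\ref{linear}) to rule out every configuration with $k\ge 1$. The only difference is that the paper cites (3.15) of \cite{Cora} for the fact that sign-changing solutions of (\ref{linear}) satisfy $I_{\infty}(v)\ge \frac{2s}{3}S_{s}^{3/2s}$, whereas you prove this directly via the nonlocal inequality $\langle(-\Delta)^{s}(u^{1})^{+},(u^{1})^{-}\rangle\le 0$; your self-contained argument is a welcome elaboration but not a different strategy.
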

\begin{proof}
By Lemma \ref{minimization}, any nontrival solution $u$ have energy $I(u)>\frac{s}{3}S_{s}^{\frac{3}{2s}}$. From (3.15) in \cite{Cora}, any sign-changing solution $v$ of (\ref{linear}) satisfies
$I_{\infty}(v)\geq \frac{2s}{3}S_{s}^{\frac{3}{2s}}$. Since any positive solution of (\ref{linear}) has energy $\frac{s}{3}S_{s}^{\frac{3}{2s}}$, by Theorem \ref{compactness}, we get the conclusion.
\end{proof}
\begin{corollary}\label{unm}
If $\{u_{n}\}$ is a minimizing sequence for $I$ on $N$, then there exist a sequence of points $\{y_{n}\}\subset \mathbb{R}^{3}$, a sequence of positive numbers $\{\delta_{n}\}\subset \mathbb{R}^{+}$ and a sequence
$\{w_{n}\}\subset D^{s,2}(\mathbb{R}^{3})$ such that
\begin{align*}
u_{n}(x)=w_{n}(x)+\psi_{\delta_{n},y_{n}}(x),
\end{align*}
where $\psi_{\delta_{n},y_{n}}(x)$ are functions defined in (\ref{psi}) and $w_{n}\rightarrow 0$ strongly in $D^{s,2}(\mathbb{R}^{3})$.
\end{corollary}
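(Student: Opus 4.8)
The plan is to replace the minimizing sequence by a Palais--Smale sequence and then let the compactness Theorem \ref{compactness} do the work. Since for every $u\in D^{s,2}(\mathbb{R}^{3})\setminus\{0\}$ the fiber map $t\mapsto I(tu)$ has a unique maximum at $t(u)>0$ (shown in the proof of Lemma \ref{minimization}, using $s>\tfrac34$), the manifold $\mathcal{N}$ is a natural constraint, so Ekeland's variational principle applied to $I|_{\mathcal{N}}$ turns any minimizing sequence into one (still written $\{u_{n}\}$) with $I(u_{n})\to m$ and $I'(u_{n})\to 0$ in $(D^{s,2}(\mathbb{R}^{3}))'$. Moreover, using $\big||u_{n}(x)|-|u_{n}(y)|\big|\le |u_{n}(x)-u_{n}(y)|$ in the representation (\ref{equivalent}), one gets $I(t|u_{n}|)\le I(tu_{n})$ for all $t>0$ (the remaining terms of $I$ are unchanged under $u\mapsto|u|$); hence $t(|u_{n}|)|u_{n}|$ is again a nonnegative minimizing sequence and we may assume $u_{n}\ge 0$.

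Applying Theorem \ref{compactness} at level $c=m$ gives, up to a subsequence, $u_{n}\rightharpoonup\bar u$ with $I'(\bar u)=0$, together with $k$ bubbles --- sequences $\{y_{n}^{i}\}$, $\{\sigma_{n}^{i}\}$ and nontrivial solutions $u^{i}$ of (\ref{linear}) --- such that $u_{n}(x)-\bar u(x)-\sum_{i=1}^{k}(\sigma_{n}^{i})^{-\frac{3-2s}{2}}u^{i}\big(\tfrac{x-y_{n}^{i}}{\sigma_{n}^{i}}\big)\to 0$ in $D^{s,2}(\mathbb{R}^{3})$ and $I(u_{n})\to I(\bar u)+\sum_{i=1}^{k}I_{\infty}(u^{i})$. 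Now comes the decisive energy count. By Lemma \ref{minimization} and (\ref{energy}), $m=\tfrac{s}{3}S_{s}^{\frac{3}{2s}}$. Testing $I'(\bar u)=0$ with $\bar u$ and eliminating $\|\bar u\|_{2_{s}^{\ast}}^{2_{s}^{\ast}}$ yields
\[
I(\bar u)=\Big(\tfrac12-\tfrac{1}{2_{s}^{\ast}}\Big)\Big(\|\bar u\|_{D^{s,2}}^{2}+\int_{\mathbb{R}^{3}}V\bar u^{2}\,dx\Big)+\Big(\tfrac14-\tfrac{1}{2_{s}^{\ast}}\Big)N(\bar u)\ge 0,
\]
because $2_{s}^{\ast}=\tfrac{6}{3-2s}>4$ for $s>\tfrac34$ and $V,K\ge 0$, with equality only if $\bar u=0$. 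Also every nontrivial solution of (\ref{linear}) has $I_{\infty}\ge\tfrac{s}{3}S_{s}^{\frac{3}{2s}}$, with the strict lower bound $\tfrac{2s}{3}S_{s}^{\frac{3}{2s}}$ for sign-changing ones (as used in Corollary \ref{und}). Hence $\tfrac{s}{3}S_{s}^{\frac{3}{2s}}\ge 0+k\cdot\tfrac{s}{3}S_{s}^{\frac{3}{2s}}$, so $k\le 1$.

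If $k=0$ then $u_{n}\to\bar u$ strongly, so $\bar u\in\mathcal{N}$ attains $m$, contradicting the non-attainment statement in Lemma \ref{minimization}; thus $k=1$. Then $\tfrac{s}{3}S_{s}^{\frac{3}{2s}}=I(\bar u)+I_{\infty}(u^{1})$ together with the two lower bounds forces $I(\bar u)=0$ and $I_{\infty}(u^{1})=\tfrac{s}{3}S_{s}^{\frac{3}{2s}}$; the former gives $\bar u=0$, and the latter says $u^{1}$ is a least-energy solution of (\ref{linear}), which therefore does not change sign and, since $u_{n}\ge 0$ forces the profile $u^{1}\ge 0$, is positive, so $u^{1}=\psi_{\delta_{0},x_{0}}$ for some $\delta_{0}>0$, $x_{0}\in\mathbb{R}^{3}$ by (\ref{form})--(\ref{psi}). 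Finally, the scaling identity $\sigma^{-\frac{3-2s}{2}}\psi_{\delta_{0},x_{0}}\big(\tfrac{\cdot-y}{\sigma}\big)=\psi_{\sigma\delta_{0},\,y+\sigma x_{0}}(\cdot)$ lets us set $\delta_{n}:=\sigma_{n}^{1}\delta_{0}$, $y_{n}:=y_{n}^{1}+\sigma_{n}^{1}x_{0}$ and $w_{n}:=u_{n}-\psi_{\delta_{n},y_{n}}$, and the profile decomposition becomes $u_{n}=w_{n}+\psi_{\delta_{n},y_{n}}$ with $w_{n}\to 0$ in $D^{s,2}(\mathbb{R}^{3})$.

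The step I expect to be the main obstacle is the energy bookkeeping in the second paragraph: it has to use, at the same time, the strict inequality $2_{s}^{\ast}>4$ --- which is exactly where the hypothesis $s>\tfrac34$ enters, making $I\ge 0$ at critical points --- the non-attainment of $m$ from Lemma \ref{minimization}, and the energy gap between positive and sign-changing solutions of (\ref{linear}), in order to conclude that there is exactly one bubble, that $\bar u=0$, and that this bubble is a positive Aubin--Talenti function rather than $\pm\psi$. The reduction to $u_{n}\ge 0$ by the fractional P\'olya--Szeg\H{o} inequality is a minor but necessary device to land on $\psi$ itself.
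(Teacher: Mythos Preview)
Your approach is exactly the one the paper intends --- its own proof is the single line ``follows from Theorem~\ref{compactness} and Lemma~\ref{minimization}'' --- and your energy bookkeeping (forcing $k=1$, $\bar u=0$, and $I_\infty(u^1)=\tfrac{s}{3}S_s^{3/2s}$) is the correct way to unpack that line.

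There is, however, a small but genuine gap in the order of your reductions. You first invoke Ekeland to obtain a Palais--Smale minimizing sequence, and \emph{then} replace it by $t(|u_n|)\,|u_n|$. The second operation produces a nonnegative minimizing sequence, but it need not be Palais--Smale anymore, so Theorem~\ref{compactness} no longer applies to it as written. The fix is to reverse the order: first pass to the nonnegative minimizing sequence $v_n:=t(|u_n|)\,|u_n|$, then apply Ekeland to $v_n$ to get a PS sequence $\tilde v_n$ with $\|\tilde v_n-v_n\|_{D^{s,2}}\to 0$. Since the rescalings of $\tilde v_n$ and of $v_n\ge 0$ differ by $o(1)$ in $D^{s,2}$, their common weak limit $u^1$ is nonnegative, and your identification $u^1=\psi_{\delta_0,x_0}$ goes through.

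Note also a residual issue that is really about the paper's statement rather than your argument: the replacement $u_n\mapsto t(|u_n|)\,|u_n|$ is not close to the original sequence in general, so the decomposition you obtain is for $v_n$, not for the given $u_n$. Indeed, the corollary as stated cannot literally hold for every minimizing sequence (take $u_n$ negative; the single bubble is then $-\psi$). In the paper's only use of this corollary (Lemma~\ref{vkc}) the functional $\alpha$ depends on $u$ solely through $|(-\Delta)^{s/2}u|^2$, so the sign of the bubble is irrelevant there. Either record the conclusion as $u_n=w_n\pm\psi_{\delta_n,y_n}$, or keep your reduction but state explicitly that you prove the decomposition for the modified nonnegative sequence, which suffices for the later application.
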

\begin{proof}
The result follows from Theorem \ref{compactness} and Lemma \ref{minimization}.
\end{proof}

\section{\textbf{Proof of the main result}}
Set
\begin{align*}
\sigma(x)=\left\{\begin{aligned}
&0, && \mbox{if}\ |x|<1\\
&1, && \mbox{if}\ |x|\geq1
\end{aligned}\right.
\end{align*}
and define
\begin{align*}
\alpha: D^{s,2}(\mathbb{R}^{3})\rightarrow \mathbb{R}^{3}\times \mathbb{R^{+}}
\end{align*}
\begin{align*}
\alpha(u)=\frac{1}{S_{s}^{\frac{3}{2s}}}\int_{\mathbb{R}^{3}}\left(\frac{x}{|x|},\sigma(x)\right)|(-\Delta)^{\frac{s}{2}}u|^{2}dx=(\beta(u),\gamma(u)),
\end{align*}
where
\begin{align*}
\beta(u)=\frac{1}{S_{s}^{\frac{3}{2s}}}\int_{\mathbb{R}^{3}}\frac{x}{|x|}|(-\Delta)^{\frac{s}{2}}u|^{2}dx, \ \ \gamma(u)=\frac{1}{S_{s}^{\frac{3}{2s}}}\int_{\mathbb{R}^{3}}\sigma(x)|(-\Delta)^{\frac{s}{2}}u|^{2}dx.
\end{align*}
\begin{lemma}\label{half}
If $|y|\geq \frac{1}{2}$, then
\begin{align*}
\beta(\psi_{\delta,y})=\frac{y}{|y|}+o(1) \ \  \mbox{as} \ \delta\rightarrow 0.
\end{align*}
\end{lemma}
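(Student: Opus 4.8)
The plan is to exploit the scaling and translation covariance of the operator $(-\Delta)^{\frac{s}{2}}$ to rewrite $\beta(\psi_{\delta,y})$ as an integral of a fixed $L^{1}$ density against the bounded, slowly varying vector field $\frac{x}{|x|}$, and then to pass to the limit $\delta\to 0$ by dominated convergence.

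First I would record two elementary identities that follow directly from the explicit formula (\ref{psi}): the translation relation $\psi_{\delta,y}(x)=\psi_{\delta,0}(x-y)$ and the scaling relation $\psi_{\delta,0}(x)=\delta^{-\frac{3-2s}{2}}\psi_{1,0}(x/\delta)$. Since $(-\Delta)^{\frac{s}{2}}$ commutes with translations and satisfies $(-\Delta)^{\frac{s}{2}}\big(f(\cdot/\delta)\big)(x)=\delta^{-s}\big((-\Delta)^{\frac{s}{2}}f\big)(x/\delta)$ (most cleanly checked on the Fourier side via (\ref{Fourier})), these combine to give
\[
(-\Delta)^{\frac{s}{2}}\psi_{\delta,y}(x)=\delta^{-\frac{3}{2}}\big((-\Delta)^{\frac{s}{2}}\psi_{1,0}\big)\!\left(\frac{x-y}{\delta}\right),
\]
hence $|(-\Delta)^{\frac{s}{2}}\psi_{\delta,y}(x)|^{2}=\delta^{-3}\big|((-\Delta)^{\frac{s}{2}}\psi_{1,0})((x-y)/\delta)\big|^{2}$. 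Taking $y=0$ and integrating recovers (\ref{energy}), which is a reassuring consistency check.

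Next I would substitute this into the definition of $\beta$ and change variables $z=(x-y)/\delta$, obtaining
\[
\beta(\psi_{\delta,y})=\frac{1}{S_{s}^{\frac{3}{2s}}}\int_{\mathbb{R}^{3}}\frac{\delta z+y}{|\delta z+y|}\,\big|((-\Delta)^{\frac{s}{2}}\psi_{1,0})(z)\big|^{2}\,dz.
\]
Because $|y|\ge\frac{1}{2}>0$, for a.e.\ fixed $z$ we have $\delta z+y\neq 0$ once $\delta$ is small, and $\frac{\delta z+y}{|\delta z+y|}\to\frac{y}{|y|}$ as $\delta\to0$; moreover the integrand is dominated in absolute value by $|((-\Delta)^{\frac{s}{2}}\psi_{1,0})(z)|^{2}$, which is integrable with total mass $S_{s}^{\frac{3}{2s}}$ by (\ref{energy}). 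Dominated convergence then yields $\beta(\psi_{\delta,y})\to\frac{1}{S_{s}^{\frac{3}{2s}}}\cdot\frac{y}{|y|}\cdot S_{s}^{\frac{3}{2s}}=\frac{y}{|y|}$, which is the assertion.

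The only delicate point — more a bookkeeping nuisance than a genuine obstacle — is that for each $\delta>0$ the map $z\mapsto\frac{\delta z+y}{|\delta z+y|}$ is undefined on the zero-measure set $\{z=-y/\delta\}$; since this set has measure zero and the dominating function has finite integral, the dominated convergence argument is unaffected. Establishing the scaling identity for $(-\Delta)^{\frac{s}{2}}$ rigorously, keeping track of the powers of $\delta$ coming both from the $\delta^{-\frac{3-2s}{2}}$ prefactor in $\psi_{\delta,0}$ and from the operator itself, is the main place where one must be careful, but it is routine.
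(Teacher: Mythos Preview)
Your argument is correct. The scaling/translation identity for $(-\Delta)^{s/2}$ is exactly as you state, the change of variables $z=(x-y)/\delta$ is clean, and dominated convergence against the fixed $L^{1}$ density $|(-\Delta)^{s/2}\psi_{1,0}|^{2}$ (total mass $S_{s}^{3/(2s)}$ by (\ref{energy})) gives the limit $y/|y|$.

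Your route is genuinely different from the paper's. The paper argues by \emph{concentration}: it tries to show that $\int_{\mathbb{R}^{3}\setminus B_{\varepsilon}(y)}|(-\Delta)^{s/2}\psi_{\delta,y}|^{2}\,dx$ is small for $\delta$ small, then uses that $x/|x|$ is close to $y/|y|$ on $B_{\varepsilon}(y)$ when $|y|\ge\tfrac12$. To control the tail it invokes the local inequality $\int_{\Omega}|(-\Delta)^{s/2}\psi_{\delta,y}|^{2}\le C\int_{\Omega}|\nabla\psi_{\delta,y}|^{2}$ on $\Omega=\mathbb{R}^{3}\setminus B_{\varepsilon}(y)$, citing a global embedding $H^{1}\hookrightarrow H^{s}$; since $(-\Delta)^{s/2}$ is nonlocal this domain-restricted bound is not a direct consequence of that embedding, so your approach is in fact the more robust one. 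Conversely, the paper's concentration viewpoint makes the \emph{uniformity} in $y$ over $\{|y|\ge\tfrac12\}$ transparent, which is what is actually used later in Lemma~\ref{deltatheta}(c). In your argument the pointwise dominated-convergence step is, as written, for fixed $y$; to get uniformity just insert the elementary estimate $\big|\tfrac{\delta z+y}{|\delta z+y|}-\tfrac{y}{|y|}\big|\le\min(2,\,2\delta|z|/|y|)\le\min(2,4\delta|z|)$, which gives a $y$-independent integrand tending to $0$.
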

\begin{proof}
From (\ref{psi}), it follows that
\begin{align*}
|\nabla \psi_{\delta,y}(x)|=(3-2s)b_{s}\delta^{\frac{3-2s}{2}}\frac{|x-y|}{(\delta^{2}+|x-y|^{2})^{\frac{5-2s}{2}}}.
\end{align*}
By (\ref{equivalent}) and Proposition 2.2 in \cite{DPV}, we have
\begin{align}\label{deltay}
\int_{\mathbb{R}^{3}\backslash B_{\varepsilon}(y)}|(-\Delta)^{\frac{s}{2}}\psi_{\delta,y}|^{2}dx&\leq C\int_{\mathbb{R}^{3}\backslash B_{\varepsilon}(y)}|\nabla\psi_{\delta,y}|^{2}dx\nonumber\\
&=(3-2s)b_{s}C\delta^{\frac{3-2s}{2}}\int_{\mathbb{R}^{3}\backslash B_{\varepsilon}(y)}\frac{|x-y|^{2}}{(\delta^{2}+|x-y|^{2})^{5-2s}}dx\nonumber\\
&=C_{1}\delta^{\frac{3-2s}{2}}\int_{\varepsilon}^{+\infty}\frac{\rho^{4}}{(\delta^{2}+\rho^{2})^{5-2s}}d\rho\nonumber\\
&\leq C_{1}\delta^{\frac{3-2s}{2}}\int_{\varepsilon}^{+\infty}\rho^{4s-6}d\rho.
\end{align}
By (\ref{deltay}),  for every $\varepsilon>0$, there exists a $\hat{\delta}$ such that $\forall \delta\in (0,\hat{\delta}]$,
\begin{align*}
\frac{1}{S_{s}^{\frac{3}{2s}}}\int_{\mathbb{R}^{3}\backslash B_{\varepsilon}(y)}|(-\Delta)^{\frac{s}{2}}\psi_{\delta,y}|^{2}dx<\varepsilon.
\end{align*}
Then
\begin{align}\label{minus1}
\left|\beta(\psi_{\delta,y})-\frac{1}{S_{s}^{\frac{3}{2s}}}\int_{B_{\varepsilon}(y)}\frac{x}{|x|}|(-\Delta)^{\frac{s}{2}}\psi_{\delta,y}|^{2}dx\right|<\varepsilon.
\end{align}
If $\varepsilon$ is small enough, for $|y|\geq \frac{1}{2}$ and $x\in B_{\varepsilon}(y)$,
\begin{align*}
\left|\frac{x}{|x|}-\frac{y}{|y|}\right|<2\varepsilon.
\end{align*}
Then by (\ref{energy}),
\begin{align}\label{minus2}
&\left|\frac{y}{|y|}-\frac{1}{S_{s}^{\frac{3}{2s}}}\int_{B_{\varepsilon}(y)}\frac{x}{|x|}|(-\Delta)^{\frac{s}{2}}\psi_{\delta,y}(x)|^{2}dx\right|\nonumber\\
&=\left|\frac{1}{S_{s}^{\frac{3}{2s}}}\left(\frac{y}{|y|}-\frac{x}{|x|}\right)|(-\Delta)^{\frac{s}{2}}\psi_{\delta,y}(x)|^{2}dx+\frac{1}{S_{s}^{\frac{3}{2s}}}\int_{\mathbb{R}^{3}\backslash B_{\varepsilon}(y)}\frac{y}{|y|}|(-\Delta)^{\frac{s}{2}}\psi_{\delta,y}(x)|^{2}dx\right|\nonumber\\
&\leq \frac{2\varepsilon}{S_{s}^{\frac{3}{2s}}}|(-\Delta)^{\frac{s}{2}}\psi_{\delta,y}(x)|^{2}dx+\varepsilon=3\varepsilon.
\end{align}
By (\ref{minus1}) and (\ref{minus2}), we have
\begin{align*}
\left|\beta(\psi_{\delta,y})-\frac{y}{|y|}\right|<4\varepsilon.
\end{align*}
\end{proof}
Define
\begin{align*}
\mathcal{M}=\left\{u\in \mathcal{N}:\alpha(u)=(\beta(u),\gamma(u))=(0,\frac{1}{2})\right\},
\end{align*}
and define
\begin{align*}
c_{\mathcal{M}}=\inf_{u\in \mathcal{M}}I(u).
\end{align*}
\begin{lemma}\label{vkc}
Suppose that $V\geq 0$, $V\in L^{\frac{3}{2s}}(\mathbb{R}^{3})$, $K\geq 0$, $K\in L^{\frac{6}{6s-3}}(\mathbb{R}^{3})$ and $\|V\|_{\frac{3}{2s}}+\|K\|_{\frac{6}{6s-3}}>0$. Then
\begin{align*}
c_{\mathcal{M}}>\frac{s}{3}S_{s}^{\frac{3}{2s}}.
\end{align*}
\end{lemma}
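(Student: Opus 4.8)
The argument is by contradiction. Since $\mathcal{M}\subset\mathcal{N}$ and, by Lemma \ref{minimization}, $\inf_{\mathcal{N}}I=m=m_{\infty}=\frac{s}{3}S_{s}^{\frac{3}{2s}}$, one always has $c_{\mathcal{M}}\geq\frac{s}{3}S_{s}^{\frac{3}{2s}}$, so it suffices to rule out equality. Assume $c_{\mathcal{M}}=\frac{s}{3}S_{s}^{\frac{3}{2s}}$ and choose a minimizing sequence $\{u_{n}\}\subset\mathcal{M}$, so that $I(u_{n})\to m$; then $\{u_{n}\}$ is in particular a minimizing sequence for $I$ on $\mathcal{N}$, and Corollary \ref{unm} provides $\delta_{n}>0$, $y_{n}\in\mathbb{R}^{3}$ and $w_{n}\to 0$ strongly in $D^{s,2}(\mathbb{R}^{3})$ with $u_{n}=w_{n}+\psi_{\delta_{n},y_{n}}$. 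The plan is to show that, along a subsequence, $\delta_{n}\to\delta_{0}\in(0,+\infty)$ and $y_{n}\to y_{0}\in\mathbb{R}^{3}$: if so, then $\psi_{\delta_{n},y_{n}}\to\psi_{\delta_{0},y_{0}}$ in $D^{s,2}(\mathbb{R}^{3})$ (the norm being constant and the convergence holding pointwise), hence $u_{n}\to\psi_{\delta_{0},y_{0}}$ strongly, so that $\psi_{\delta_{0},y_{0}}\in\mathcal{N}$ and $I(\psi_{\delta_{0},y_{0}})=m$, contradicting the non-attainment of $m$ established in Lemma \ref{minimization} (recall $\|V\|_{\frac{3}{2s}}+\|K\|_{\frac{6}{6s-3}}>0$).

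To pin down the parameters I would exploit the constraint $\alpha(u_{n})=(0,\frac{1}{2})$. Because $w_{n}\to 0$ in $D^{s,2}(\mathbb{R}^{3})$, $\|\psi_{\delta_{n},y_{n}}\|_{D^{s,2}}^{2}=S_{s}^{\frac{3}{2s}}$ is constant, and the weights $x/|x|$ and $\sigma(x)$ are bounded, expanding $|(-\Delta)^{\frac{s}{2}}u_{n}|^{2}$ and discarding the mixed term (controlled by $2\|w_{n}\|_{D^{s,2}}\|\psi_{\delta_{n},y_{n}}\|_{D^{s,2}}$) and the $|(-\Delta)^{\frac{s}{2}}w_{n}|^{2}$ term shows that $\beta(\psi_{\delta_{n},y_{n}})\to 0$ and $\gamma(\psi_{\delta_{n},y_{n}})\to\frac{1}{2}$. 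Now the scaling relation $\psi_{\delta,y}(x)=\delta^{-\frac{3-2s}{2}}\psi_{1,0}\bigl(\frac{x-y}{\delta}\bigr)$ yields $\int_{\mathbb{R}^{3}}f(x)|(-\Delta)^{\frac{s}{2}}\psi_{\delta,y}(x)|^{2}dx=\int_{\mathbb{R}^{3}}f(y+\delta z)|(-\Delta)^{\frac{s}{2}}\psi_{1,0}(z)|^{2}dz$, and since $|(-\Delta)^{\frac{s}{2}}\psi_{1,0}|^{2}\in L^{1}(\mathbb{R}^{3})$ with integral $S_{s}^{\frac{3}{2s}}$, the three degenerate regimes are excluded one by one: (i) if $\delta_{n}\to+\infty$, or if $\delta_{n}$ stays bounded while $|y_{n}|\to+\infty$, then $\sigma(y_{n}+\delta_{n}z)\to 1$ for a.e.\ $z$ (the set $\{z:|y_{n}+\delta_{n}z|\leq 1\}$ being a ball of radius $1/\delta_{n}$, resp.\ escaping to infinity), so $\gamma(\psi_{\delta_{n},y_{n}})\to 1$; (ii) if $\delta_{n}\to 0$ with $|y_{n}|\leq\frac{1}{2}$, the mass of $|(-\Delta)^{\frac{s}{2}}\psi_{\delta_{n},y_{n}}|^{2}$ concentrates inside $B_{1}(0)$ and $\gamma(\psi_{\delta_{n},y_{n}})\to 0$; (iii) if $\delta_{n}\to 0$ with $|y_{n}|\geq\frac{1}{2}$, the uniform-in-$y$ version of Lemma \ref{half} gives $\beta(\psi_{\delta_{n},y_{n}})=\frac{y_{n}}{|y_{n}|}+o(1)$, a vector of modulus $1$. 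Each of (i)--(iii) contradicts $\beta(\psi_{\delta_{n},y_{n}})\to 0$ and $\gamma(\psi_{\delta_{n},y_{n}})\to\frac{1}{2}$, so $\{\delta_{n}\}$ is bounded away from $0$ and $+\infty$ and $\{y_{n}\}$ is bounded, which is exactly what the plan requires.

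The estimates underlying (i)--(ii) are elementary consequences of the change of variables above together with dominated convergence; making Lemma \ref{half} uniform in $y$ for $|y|\geq\frac{1}{2}$ is likewise routine, since in its proof both the bound $\int_{\mathbb{R}^{3}\setminus B_{\varepsilon}(y)}|(-\Delta)^{\frac{s}{2}}\psi_{\delta,y}|^{2}dx\leq C\delta^{\frac{3-2s}{2}}\int_{\varepsilon}^{+\infty}\rho^{4s-6}d\rho$ and the inequality $\bigl|\frac{x}{|x|}-\frac{y}{|y|}\bigr|\leq\frac{2\varepsilon}{|y|-\varepsilon}$ for $x\in B_{\varepsilon}(y)$ are independent of $y$ in the relevant range. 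The point that requires the most care — and the reason for splitting cases at $|y_{n}|=\frac{1}{2}$ rather than at $|y_{n}|=1$ — is the borderline regime $\delta_{n}\to 0$, $|y_{n}|\to 1$, where $\gamma(\psi_{\delta_{n},y_{n}})$ is genuinely indeterminate; this is sidestepped by invoking the $\beta$-constraint in case (iii), which gives a clean obstruction (a unit vector cannot tend to $0$) regardless of where on the unit sphere $\frac{y_{n}}{|y_{n}|}$ accumulates.
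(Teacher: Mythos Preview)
Your proposal is correct and follows essentially the same route as the paper: argue by contradiction, invoke Corollary \ref{unm} to write $u_{n}=w_{n}+\psi_{\delta_{n},y_{n}}$ with $w_{n}\to 0$, pass the constraints $(\beta,\gamma)=(0,\tfrac{1}{2})$ to $\psi_{\delta_{n},y_{n}}$, and eliminate the degenerate regimes of $(\delta_{n},y_{n})$ by the same case analysis (with Lemma \ref{half} handling $\delta_{n}\to 0$, $|y_{n}|\geq\tfrac{1}{2}$). The only cosmetic difference is the placement of the appeal to Lemma \ref{minimization}: the paper uses it inside its case (b) to force $|y_{n}|\to\infty$ when $\delta_{n}\to\tilde{\delta}>0$, whereas you let ``$\delta_{n}\to\delta_{0}>0$, $y_{n}\to y_{0}$'' survive the case analysis and then derive the contradiction from non-attainment of $m$ at the end.
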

\begin{proof}
It is obvious that $c_{0}\geq \frac{s}{3}S_{s}^{\frac{3}{2s}}$. To prove $c_{0}>\frac{s}{3}S_{s}^{\frac{3}{2s}}$, we argue by contradiction. Suppose that there exists a sequence
$\{u_{n}\}\subset \mathcal{N}$ such that
\begin{align}\label{betagamma}
\beta(u_{n})=0, \ \gamma(u_{n})=\frac{1}{2}
\end{align}
\begin{align}\label{iunm}
\lim_{n\rightarrow\infty}I(u_{n})=\frac{s}{3}S_{s}^{\frac{3}{2s}}.
\end{align}
By Corollary \ref{unm}, there exist a sequence of points $\{y_{n}\}\subset \mathbb{R}^{3}$, a sequence of positive numbers $\{\delta_{n}\}\subset \mathbb{R}^{+}$ and a sequence
of functions $\{w_{n}\}\subset D^{s,2}(\mathbb{R}^{3})$ converging to 0 in $D^{s,2}(\mathbb{R}^{3})$ such that
\begin{align*}
u_{n}(x)=w_{n}(x)+\psi_{\delta_{n},y_{n}}(x).
\end{align*}
Since $w_{n}\rightarrow 0$ in $D^{s,2}(\mathbb{R}^{3})$, then for $n$ big enough, we have
\begin{align}\label{wnpsi}
\alpha(w_{n}+\psi_{\delta_{n},y_{n}})&=\frac{1}{S_{s}^{\frac{3}{2}}}\int_{\mathbb{R}^{3}}\left(\frac{x}{|x|},\sigma(x)\right)|(-\Delta)^{\frac{s}{2}}(w_{n}+\psi_{\delta_{n},y_{n}})|^{2}dx\nonumber\\
&=\frac{1}{S_{s}^{\frac{3}{2}}}\int_{\mathbb{R}^{3}}\left(\frac{x}{|x|},\sigma(x)\right)|(-\Delta)^{\frac{s}{2}}w_{n}|^{2}dx\nonumber\\
&\quad+\frac{2}{S_{s}^{\frac{3}{2}}}\int_{\mathbb{R}^{3}}\left(\frac{x}{|x|},\sigma(x)\right)((-\Delta)^{\frac{s}{2}}w_{n},(-\Delta)^{\frac{s}{2}}\psi_{\delta_{n},y_{n}})dx\nonumber\\
&\quad+\frac{1}{S_{s}^{\frac{3}{2}}}\int_{\mathbb{R}^{3}}\left(\frac{x}{|x|},\sigma(x)\right)|(-\Delta)^{\frac{s}{2}}\psi_{\delta_{n},y_{n}}|^{2}dx\nonumber\\
&=\frac{1}{S_{s}^{\frac{3}{2}}}\int_{\mathbb{R}^{3}}\left(\frac{x}{|x|},\sigma(x)\right)|(-\Delta)^{\frac{s}{2}}\psi_{\delta_{n},y_{n}}|^{2}dx+o(1)\nonumber\\
&=\alpha(\psi_{\delta_{n},y_{n}})+o(1).
\end{align}
From (\ref{betagamma}) and (\ref{wnpsi}), it follows that
\begin{align*}
\mbox{(i)} \ \ \beta(\psi_{\delta_{n},y_{n}})\rightarrow 0 \ \ \mbox{as}  \ \ n\rightarrow \infty.
\end{align*}
\begin{align*}
\mbox{(ii)} \ \ \gamma(\psi_{\delta_{n},y_{n}})\rightarrow 0 \ \ \mbox{as} \ n\rightarrow \infty.
\end{align*}
For $\delta_{n}$, up to a sunsequence, one of these cases occurs\\
(a) $\delta_{n}\rightarrow \infty$ as $n\rightarrow \infty$\\
(b) $\delta_{n}\rightarrow \tilde{\delta}\neq 0$ as $n\rightarrow \infty$\\
(c) $\delta_{n}\rightarrow 0$ and $y_{n}\rightarrow \bar{y}$, $\bar{y}<\frac{1}{2}$ as $n\rightarrow \infty$\\
(d) $\delta_{n}\rightarrow 0$ as $n\rightarrow \infty$ and $|y_{n}|\geq \frac{1}{2}$ for $n$ large. \\
We will prove that none of the possibilities (a)-(d) can be true. If (a) holds,
\begin{align*}
\gamma(\psi_{\delta_{n},y_{n}})&=\frac{1}{S_{s}^{\frac{3}{2s}}}\int_{\mathbb{R}^{3}}\sigma(x)|(-\triangle)^{\frac{s}{2}}\psi_{\delta_{n},y_{n}}|^{2}dx=\frac{1}{S_{s}^{\frac{3}{2s}}}\int_{\mathbb{R}^{3}\backslash B_{1}(0) }|(-\triangle)^{\frac{s}{2}}\psi_{\delta_{n},y_{n}}|^{2}dx\\
&=1-\frac{1}{S_{s}^{\frac{3}{2s}}}\int_{B_{1}(0)}|(-\triangle)^{\frac{s}{2}}\psi_{\delta_{n},y_{n}}|^{2}dx=1-o(1) \ \  \ \ \mbox{as} \ n\rightarrow \infty.
\end{align*}
This contradicts  (ii). If (b) holds, then $|y_{n}|\rightarrow +\infty$, otherwise $\psi_{\delta_{n},y_{n}}$ would converge strongly in $D^{s,2}(\mathbb{R}^{3})$, so $u_{n}$ would converge strongly in $D^{s,2}(\mathbb{R}^{3})$ against Lemma \ref{minimization}. Then we have
\begin{align*}
\gamma(\psi_{\delta_{n},y_{n}})&=\gamma(\psi_{\bar{\delta},y_{n}})+o(1)=\frac{1}{S_{s}^{\frac{3}{2s}}}\int_{\mathbb{R}^{3}}\sigma(x)|(-\Delta)^{\frac{s}{2}}\psi_{\bar{\delta},y_{n}}|^{2}dx+o(1)\\
&=\frac{1}{S_{s}^{\frac{3}{2s}}}\int_{\mathbb{R}^{3}}\sigma(x-y_{n})|(-\Delta)^{\frac{s}{2}}\psi_{\bar{\delta},0}|^{2}dx+o(1)\\
&=1-\frac{1}{S_{s}^{\frac{3}{2s}}}\int_{B_{1}(y_{n})}|(-\Delta)^{\frac{s}{2}}\psi_{\bar{\delta},0}|^{2}dx+o(1)\\
&=1+o(1) \ \ \ \  \mbox{as} \ n\rightarrow \infty.
\end{align*}
This contradicts (ii). If (c) holds, then
\begin{align*}
\gamma(\psi_{\delta_{n},y_{n}})&=\frac{1}{S_{s}^{\frac{3}{2s}}}\int_{\mathbb{R}^{3}\backslash B_{1}(0)}|(-\Delta)^{\frac{s}{2}}\psi_{\bar{\delta},y_{n}}|^{2}dx\\
&=\frac{1}{S_{s}^{\frac{3}{2s}}}\int_{\mathbb{R}^{3}\backslash B_{1}(y_{n})}|(-\Delta)^{\frac{s}{2}}\psi_{\bar{\delta},0}|^{2}dx\\
&=o(1).
\end{align*}
This contradicts (ii). If (d) holds, by Lemma \ref{half}, we have
\begin{align*}
\beta(\psi_{\delta_{n},y_{n}})=\frac{y_{n}}{|y_{n}|}+o(1) \  \ \ \mbox{as} \ n\rightarrow \infty.
\end{align*}
This contradicts (i).
\end{proof}

Define $\theta:D^{s,2}(\mathbb{R}^{3})\backslash \{0\}\rightarrow \mathcal{N}$:
\begin{align*}
\theta(u)=t(u)u,
\end{align*}
where $t(u)$ is the unique positive number such that $t(u)u\in \mathcal{N}$. Let $T$ be the operator
\begin{align*}
T:\mathbb{R}^{3}\times (0,+\infty)\rightarrow D^{s,2}(\mathbb{R}^{3})
\end{align*}
defined by
\begin{align*}
T(y,\delta)=\psi_{\delta,y}(x).
\end{align*}
\begin{lemma}\label{vkdelta}
Assume that $V\geq 0$, $V\in L^{\frac{3}{2s}}(\mathbb{R}^{3})$, $K\geq 0$, $K\in L^{\frac{6}{6s-3}}(\mathbb{R}^{3})$. Then for any $\varepsilon>0$, there exist $\delta_{1}=\delta_{1}(\varepsilon)$,  $\delta_{2}=\delta_{2}(\varepsilon)$ such that
\begin{align*}
I(\theta\circ T(y,\delta))<\frac{s}{3}S_{s}^{\frac{3}{2s}}+\varepsilon
\end{align*}
for any $y\in \mathbb{R}^{3}$ and $\delta\in (0,\delta_{1}]\cup [\delta_{2},+\infty)$.
\end{lemma}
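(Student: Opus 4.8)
The plan is to reduce the bound to the fact that the two ``error terms'' contributed by $V$ and $K$ to the energy of the projected bubble vanish, uniformly in $y$, as $\delta\to0^{+}$ or $\delta\to+\infty$. By the definition of $\theta$ and the argument in the proof of Lemma \ref{minimization}, $I(\theta\circ T(y,\delta))=\max_{t>0}I(t\psi_{\delta,y})$. Writing $a_{\delta,y}:=\int_{\mathbb{R}^{3}}V(x)\psi_{\delta,y}^{2}\,dx\ge0$ and using $(\ref{corresponding})$, $(\ref{energy})$ together with $\phi^{s}_{t\psi_{\delta,y}}=t^{2}\phi^{s}_{\psi_{\delta,y}}$, one obtains
\[
g_{\delta,y}(t):=I(t\psi_{\delta,y})=\tfrac{t^{2}}{2}\bigl(S_{s}^{\frac{3}{2s}}+a_{\delta,y}\bigr)+\tfrac{t^{4}}{4}\,N(\psi_{\delta,y})-\tfrac{t^{2_{s}^{\ast}}}{2_{s}^{\ast}}S_{s}^{\frac{3}{2s}},
\]
with $a_{\delta,y}\ge0$ and $N(\psi_{\delta,y})\ge0$. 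I will show (i) if $a_{\delta,y}$ and $N(\psi_{\delta,y})$ are small, then $\max_{t>0}g_{\delta,y}(t)\le\tfrac{s}{3}S_{s}^{\frac{3}{2s}}+2a_{\delta,y}+4N(\psi_{\delta,y})$, and (ii) $\sup_{y\in\mathbb{R}^{3}}a_{\delta,y}\to0$ and $\sup_{y\in\mathbb{R}^{3}}N(\psi_{\delta,y})\to0$ as $\delta\to0^{+}$ and as $\delta\to+\infty$; the lemma then follows by choosing $\delta_{1}$ small and $\delta_{2}$ large.

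For (i): at a maximum point $t_{\delta,y}>0$, $g_{\delta,y}'(t_{\delta,y})=0$ reads $t_{\delta,y}^{2_{s}^{\ast}-2}S_{s}^{\frac{3}{2s}}=S_{s}^{\frac{3}{2s}}+a_{\delta,y}+t_{\delta,y}^{2}N(\psi_{\delta,y})$. Since $s>\tfrac34$ gives $2_{s}^{\ast}-2=\tfrac{4s}{3-2s}>2$, we have $t^{2}\le t^{2_{s}^{\ast}-2}$ for $t\ge1$; inserting this in the identity (in the case $t_{\delta,y}\ge1$) yields $t_{\delta,y}^{2_{s}^{\ast}-2}\bigl(S_{s}^{\frac{3}{2s}}-N(\psi_{\delta,y})\bigr)\le S_{s}^{\frac{3}{2s}}+a_{\delta,y}$, so $t_{\delta,y}\le2$ as soon as, say, $a_{\delta,y}\le S_{s}^{\frac{3}{2s}}$ and $N(\psi_{\delta,y})\le\tfrac12 S_{s}^{\frac{3}{2s}}$. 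Because $\tfrac{t^{2}}{2}-\tfrac{t^{2_{s}^{\ast}}}{2_{s}^{\ast}}$ attains its maximum $\tfrac12-\tfrac1{2_{s}^{\ast}}=\tfrac{s}{3}$ at $t=1$,
\[
g_{\delta,y}(t_{\delta,y})\le S_{s}^{\frac{3}{2s}}\Bigl(\tfrac12-\tfrac1{2_{s}^{\ast}}\Bigr)+\tfrac{t_{\delta,y}^{2}}{2}a_{\delta,y}+\tfrac{t_{\delta,y}^{4}}{4}N(\psi_{\delta,y})\le\tfrac{s}{3}S_{s}^{\frac{3}{2s}}+2a_{\delta,y}+4N(\psi_{\delta,y}).
\]

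For (ii), the key elementary fact is that for every $q\in[2,2_{s}^{\ast})$ and every bounded $\Omega\subset\mathbb{R}^{3}$ one has $\sup_{y}\int_{\Omega}\psi_{\delta,y}^{q}\,dx\to0$ as $\delta\to0^{+}$ and as $\delta\to+\infty$. For $\delta\to+\infty$ this is immediate from $\psi_{\delta,y}(x)\le b_{s}\delta^{-(3-2s)/2}$, valid for all $x$. For $\delta\to0^{+}$: either $\mathrm{dist}(y,\Omega)\ge1$, where $\psi_{\delta,y}(x)\le b_{s}\delta^{(3-2s)/2}$ on $\Omega$ for $\delta\le1$, or $\Omega\subset B_{R}(y)$ with $R:=1+\mathrm{diam}\,\Omega$ fixed, and the substitution $x=y+\delta z$ gives $\int_{B_{R}(y)}\psi_{\delta,y}^{q}=b_{s}^{q}\,\delta^{3-q(3-2s)/2}\!\int_{B_{R/\delta}(0)}(1+|z|^{2})^{-q(3-2s)/2}\,dz$, where $3-q(3-2s)/2\in(0,2s]$ while the last integral grows no faster than $(R/\delta)^{\max\{3-q(3-2s),0\}}$ (with an extra $|\log\delta|$ in the borderline case $q(3-2s)=3$), and $\max\{3-q(3-2s),0\}<3-q(3-2s)/2$; hence the expression is a fixed positive power of $\delta$ tending to $0$, uniformly in $y$. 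Granting this, fix $\varepsilon'>0$ and split $V=V_{1}+V_{2}$ with $0\le V_{1}\in L^{\infty}$ supported in a bounded set and $\|V_{2}\|_{3/2s}<\varepsilon'$: by Hölder and $(\ref{energy})$, $\int V_{2}\psi_{\delta,y}^{2}\le\|V_{2}\|_{3/2s}\|\psi_{\delta,y}\|_{2_{s}^{\ast}}^{2}\le C\varepsilon'$, while $\int V_{1}\psi_{\delta,y}^{2}\le\|V_{1}\|_{\infty}\int_{\mathrm{supp}\,V_{1}}\psi_{\delta,y}^{2}\to0$, so $\sup_{y}a_{\delta,y}\le C\varepsilon'+o(1)$ in both regimes. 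Likewise, the computation in $(\ref{phi2})$ with Hölder grouped as $\int K\psi_{\delta,y}^{2}v\le\|K\psi_{\delta,y}^{2}\|_{6/(3+2s)}\|v\|_{2_{s}^{\ast}}$ gives $N(\psi_{\delta,y})=\|\phi^{s}_{\psi_{\delta,y}}\|_{D^{s,2}}^{2}\le S_{s}^{-1}\|K\psi_{\delta,y}^{2}\|_{6/(3+2s)}^{2}$; splitting $K=K_{1}+K_{2}$ with $0\le K_{1}\in L^{\infty}$ supported in a bounded set $\Omega$ and $\|K_{2}\|_{6/(6s-3)}<\varepsilon'$, $\|K\psi_{\delta,y}^{2}\|_{6/(3+2s)}\le\|K_{2}\|_{6/(6s-3)}\|\psi_{\delta,y}\|_{2_{s}^{\ast}}^{2}+\|K_{1}\|_{\infty}\bigl(\int_{\Omega}\psi_{\delta,y}^{12/(3+2s)}\bigr)^{(3+2s)/6}\le C\varepsilon'+o(1)$, since $\tfrac{12}{3+2s}\in[2,2_{s}^{\ast})$; hence $\sup_{y}N(\psi_{\delta,y})\to0$ in both regimes.

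Combining (i) and (ii): given $\varepsilon>0$, pick $\varepsilon'$ small enough and then $\delta_{1}=\delta_{1}(\varepsilon)$ small and $\delta_{2}=\delta_{2}(\varepsilon)$ large so that, uniformly in $y\in\mathbb{R}^{3}$, both $a_{\delta,y}$ and $N(\psi_{\delta,y})$ obey the smallness needed in (i) and $2a_{\delta,y}+4N(\psi_{\delta,y})<\varepsilon$ whenever $\delta\in(0,\delta_{1}]\cup[\delta_{2},+\infty)$; then $I(\theta\circ T(y,\delta))<\tfrac{s}{3}S_{s}^{\frac{3}{2s}}+\varepsilon$. The main obstacle is Step (ii) in the regime $\delta\to0^{+}$: the $L^{3/2s}$- and $L^{6/(6s-3)}$-density of bounded compactly supported functions disposes of the ``tails'' of $V$ and $K$, but one still has to quantify the decay of the subcritical $L^{q}$-mass of the concentrating bubble over a fixed bounded set and, crucially, verify that this decay is uniform in the concentration center $y$.
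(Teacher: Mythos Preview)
Your proof is correct and follows essentially the same strategy as the paper: reduce to showing that $\int_{\mathbb{R}^{3}}V\psi_{\delta,y}^{2}$ and $N(\psi_{\delta,y})$ vanish uniformly in $y$ as $\delta\to0^{+}$ or $\delta\to+\infty$, and then bound $\max_{t>0}I(t\psi_{\delta,y})$ by $\tfrac{s}{3}S_{s}^{3/2s}$ plus these error terms. The only cosmetic differences are that the paper splits the \emph{domain} of integration (inside/outside a ball, using absolute continuity of $\|V\|_{3/2s}$ and $\|K\|_{6/(6s-3)}$ on small or distant sets) whereas you split the \emph{functions} $V,K$ via density of $L^{\infty}_{c}$, and the paper argues $t_{\delta,y}\to1$ while you settle for the cruder but equally sufficient bound $t_{\delta,y}\le2$.
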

\begin{proof}
Since $V\in L^{\frac{3}{2s}}(\mathbb{R}^{3})$, then for any $\varepsilon>0$, there exists $R>0$ such that
\begin{align}\label{rbv}
\left(\int_{\mathbb{R}^{3}\backslash B_{R}(0)}|V(x)|^{\frac{3}{2s}}dx\right)^{\frac{2s}{3}}<\frac{\varepsilon}{2S_{s}^{\frac{3-2s}{2s}}}.
\end{align}
From $\lim_{\delta\rightarrow +\infty}\sup_{y\in \mathbb{R}^{3}}|\psi_{\delta,y}|=0$, it follows that $\lim_{\delta\rightarrow +\infty}\int_{B_{R}(0)}|\psi_{\delta,y}|^{2_{s}^{\ast}}dx=0$.
Therefore there exists $\delta_{2}=\delta_{2}(\varepsilon)$ such that
\begin{align}\label{supdelta}
\sup_{y\in \mathbb{R}^{3}}\left(\int_{B_{R}(0)}|\psi_{\delta,y}|^{2_{s}^{\ast}}dx\right)^{\frac{3-2s}{3}}<\frac{\varepsilon}{2\|V\|_{\frac{3}{2s}}}, \ \ \ \delta\geq \delta_{2}.
\end{align}
By (\ref{rbv}) and (\ref{supdelta}), we have
\begin{align}\label{vpsi}
\int_{\mathbb{R}^{3}}V(x)\psi_{\delta,y}^{2}dx&=\int_{B_{R}(0)}V(x)\psi_{\delta,y}^{2}dx+\int_{\mathbb{R}^{3}\backslash B_{R}(0)}V(x)\psi_{\delta,y}^{2}dx\nonumber\\
&\leq \left(\int_{B_{R}(0)}|V(x)|^{\frac{3}{2s}}dx\right)^{\frac{2s}{3}}\left(\int_{B_{R}(0)}|\psi_{\delta,y}|^{2_{s}^{\ast}}dx\right)^{\frac{3-2s}{3}}\nonumber\\
&\quad+\left(\int_{\mathbb{R}^{3}\backslash B_{R}(0)}|V(x)|^{\frac{3}{2s}}dx\right)^{\frac{2s}{3}}\left(\int_{\mathbb{R}^{3}\backslash B_{R}(0)}|\psi_{\delta,y}|^{2_{s}^{\ast}}dx\right)^{\frac{3-2s}{3}}\nonumber\\
&\leq \|V\|_{\frac{3}{2s}}\left(\int_{B_{R}(0)}|\psi_{\delta,y}|^{2_{s}^{\ast}}dx\right)^{\frac{3-2s}{3}}+\left(\int_{\mathbb{R}^{3}\backslash B_{R}(0)}|V(x)|^{\frac{3}{2s}}dx\right)^{\frac{2s}{3}}\|\psi_{\delta,y}\|_{2_{s}^{\ast}}^{2}\nonumber\\
&<\frac{\varepsilon}{2}+\frac{\varepsilon}{2}=\varepsilon.
\end{align}
Since $V\in L^{\frac{3}{2s}}(\mathbb{R}^{3})$, for any $\varepsilon>0$, there exists $r>0$ small enough such that
\begin{align}\label{supy}
\sup_{y\in \mathbb{R}^{3}}\left(\int_{B_{r}(y)}|V(x)|^{\frac{3}{2s}}dx\right)^{\frac{2s}{3}}<\frac{\varepsilon}{2S_{s}^{\frac{3-2s}{2s}}}.
\end{align}
By (\ref{psi}),
\begin{align*}
\int_{\mathbb{R}^{3}\backslash B_{r}(y)}|\psi_{\delta,y}|^{2_{s}^{\ast}}dx&=C\delta^{3}\int_{\mathbb{R}^{3}\backslash B_{r}(y)}\frac{1}{(\delta^{2}+|x-y|^{2})^{3}}dx\\
&=\widetilde{C}\delta^{3}\int_{r}^{+\infty}\frac{\rho^{2}}{(\delta^{2}+\rho^{2})^{3}}d\rho,
\end{align*}
so there is a $\delta_{1}=\delta_{1}(\varepsilon)$ such that for any $\delta\in (0,\delta_{1}]$, there holds
\begin{align}\label{rbry}
\left(\int_{\mathbb{R}^{3}\backslash B_{r}(y)}|\psi_{\delta,y}|^{2_{s}^{\ast}}dx\right)^{\frac{3-2s}{3}}<\frac{\varepsilon}{2\|V\|_{\frac{3}{2s}}}, \ \ \ 0<\delta\leq \delta_{1}.
\end{align}
By (\ref{supy}) and (\ref{rbry}), we get
\begin{align}\label{vpsidelta}
\int_{\mathbb{R}^{3}}V(x)\psi_{\delta,y}^{2}dx&=\int_{B_{r}(y)}V(x)\psi_{\delta,y}^{2}dx+\int_{\mathbb{R}^{3}\backslash B_{r}(y)}V(x)\psi_{\delta,y}^{2}dx\nonumber\\
&\leq \left(\int_{B_{r}(y)}|V(x)|^{\frac{3}{2s}}dx\right)^{\frac{2s}{3}}\left(\int_{B_{r}(y)}|\psi_{\delta,y}|^{2_{s}^{\ast}}dx\right)^{\frac{3-2s}{3}}\nonumber\\
&\quad+\left(\int_{\mathbb{R}^{3}\backslash B_{r}(y)}|V(x)|^{\frac{3}{2s}}dx\right)^{\frac{2s}{3}}\left(\int_{\mathbb{R}^{3}\backslash B_{r}(y)}|\psi_{\delta,y}|^{2_{s}^{\ast}}dx\right)^{\frac{3-2s}{3}}\nonumber\\
&\leq \left(\int_{B_{r}(y)}|V(x)|^{\frac{3}{2s}}dx\right)^{\frac{2s}{3}}\left(\int_{B_{r}(y)}|\psi_{\delta,y}|^{2_{s}^{\ast}}dx\right)^{\frac{3-2s}{3}}\nonumber\\
&\quad+\|V\|_{\frac{3}{2s}}\left(\int_{\mathbb{R}^{3}\backslash B_{r}(y)}|V(x)|^{\frac{3}{2s}}dx\right)^{\frac{2s}{3}}\nonumber\\
&<\frac{\varepsilon}{2}+\frac{\varepsilon}{2}=\varepsilon.
\end{align}
From (\ref{vpsi}) and (\ref{vpsidelta}), it follows that
\begin{align}\label{rvdy}
\int_{\mathbb{R}^{3}}V(x)\psi_{\delta,y}^{2}dx<\varepsilon, \ \mbox{for any} \ y\in \mathbb{R}^{3} \ \mbox{and} \ \ \delta\in (0,\delta_{1}]\cup [\delta_{2},+\infty).
\end{align}
Similarly, we can get that
\begin{align}\label{npsidelta}
N(\psi_{\delta,y})<\varepsilon, \ \mbox{for any} \ y\in \mathbb{R}^{3} \ \mbox{and} \ \ \delta\in (0,\delta_{1}]\cup [\delta_{2},+\infty).
\end{align}
Since $\psi_{\delta,y}\in \mathcal{N}_{\infty}$, it is easy to show that there exists $t_{\delta,y}=t(\psi_{\delta,y})$ such that $t_{\delta,y}\psi_{\delta,y}\in \mathcal{N}$. By an argument analogous to (\ref{nw})-(\ref{imply}), we verify that for any $y\in \mathbb{R}^{3}$,
\begin{align}\label{tdelta}
t_{\delta,y}\rightarrow 1 \ \ \ \mbox{as} \ \ \delta\rightarrow 0 \ \ \mbox{or} \ \ \delta\rightarrow+\infty.
\end{align}
By (\ref{rvdy}), (\ref{npsidelta}) and (\ref{tdelta}), we obtain
\begin{align*}
I(\theta\circ T(y,\delta))&=\frac{1}{2}t_{\delta,y}^{2}\|\psi_{\delta,y}\|_{D^{s,2}}^{2}+\frac{1}{2}t_{\delta,y}^{2}\int_{\mathbb{R}^{3}}V(x)\psi_{\delta,y}^{2}dx+\frac{1}{4}t_{\delta,y}^{4}N(\psi_{\delta,y})\\
&\quad-\frac{1}{2_{s}^{\ast}}t_{\delta,y}^{2_{s}^{\ast}}\|\psi_{\delta,y}\|_{2_{s}^{\ast}}^{2_{s}^{\ast}}\\
&=I_{\infty}(t_{\delta,y}\psi_{\delta,y})+\frac{1}{2}t_{\delta,y}^{2}\int_{\mathbb{R}^{3}}V(x)\psi_{\delta,y}^{2}dx+\frac{1}{4}t_{\delta,y}^{4}N(\psi_{\delta,y})\\
&<I_{\infty}(t_{\delta,y}\psi_{\delta,y})+\varepsilon\\
&=\frac{s}{3}S_{s}^{\frac{3}{2s}}+\varepsilon,
\end{align*}
for any $y\in \mathbb{R}^{3}$ and $\delta\in (0,\delta_{1}]\cup [\delta_{2},+\infty)$.
\end{proof}
\begin{lemma}\label{vky}
Assume that $V\geq 0$, $V\in L^{\frac{3}{2s}}(\mathbb{R}^{3})$, $K\geq 0$, $K\in L^{\frac{6}{6s-3}}(\mathbb{R}^{3})$. Then for any fixed $\delta>0$,
\begin{align*}
\lim_{|y|\rightarrow +\infty}I(\theta\circ T(y,\delta))=\frac{s}{3}S_{s}^{\frac{3}{2s}}.
\end{align*}
\end{lemma}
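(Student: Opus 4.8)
The plan is to exploit that $T(y,\delta)=\psi_{\delta,y}=\psi_{\delta,0}(\cdot-y)$ is a translate of the fixed profile $\psi_{\delta,0}$, so that as $|y|\to\infty$ its mass escapes to infinity while $\|\psi_{\delta,y}\|_{D^{s,2}}$ and $\|\psi_{\delta,y}\|_{2_{s}^{\ast}}$ stay constant. Concretely, along every sequence $|y_{n}|\to\infty$ one has $\psi_{\delta,y_{n}}\rightharpoonup 0$ weakly in $D^{s,2}(\mathbb{R}^{3})$: testing against $v\in C_{0}^{\infty}(\mathbb{R}^{3})$ and using that $\psi_{\delta,y_{n}}$ solves (\ref{linear}) reduces the pairing to $\int_{\mathbb{R}^{3}}|\psi_{\delta,y_{n}}|^{2_{s}^{\ast}-2}\psi_{\delta,y_{n}}v\,dx$, which tends to $0$ since $v$ has compact support and $\psi_{\delta,y_{n}}\to0$ uniformly there; density of $C_{0}^{\infty}(\mathbb{R}^{3})$ in $D^{s,2}(\mathbb{R}^{3})$ and the uniform bound on $\|\psi_{\delta,y_{n}}\|_{D^{s,2}}$ then give the claim, and the same reasoning yields $\psi_{\delta,y_{n}}^{2}\rightharpoonup0$ in $L^{\frac{2_{s}^{\ast}}{2}}(\mathbb{R}^{3})$.

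Using this, I would first show that the two non-autonomous terms vanish in the limit. Since $\frac{3}{2s}$ is the exponent dual to $\frac{2_{s}^{\ast}}{2}$, the weak convergence $\psi_{\delta,y_{n}}^{2}\rightharpoonup0$ together with $V\in L^{\frac{3}{2s}}(\mathbb{R}^{3})$ gives $\int_{\mathbb{R}^{3}}V(x)\psi_{\delta,y_{n}}^{2}\,dx\to0$, exactly as in the derivation of (\ref{vw}); and since $\psi_{\delta,y_{n}}\rightharpoonup0$ in $D^{s,2}(\mathbb{R}^{3})$, part (2) of Lemma \ref{high} yields $N(\psi_{\delta,y_{n}})\to0$. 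As any sequence with $|y_{n}|\to\infty$ has a subsequence along which both hold, we conclude $\int_{\mathbb{R}^{3}}V(x)\psi_{\delta,y}^{2}\,dx\to0$ and $N(\psi_{\delta,y})\to0$ as $|y|\to\infty$.

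Next I would control $\theta\circ T(y,\delta)=t_{\delta,y}\psi_{\delta,y}$, where $t_{\delta,y}=t(\psi_{\delta,y})>0$ is the unique number with $t_{\delta,y}\psi_{\delta,y}\in\mathcal{N}$. Writing $\gamma(t)=\langle I'(t\psi_{\delta,y}),t\psi_{\delta,y}\rangle=at^{2}+bt^{4}-ct^{2_{s}^{\ast}}$ with $a=\|\psi_{\delta,y}\|_{D^{s,2}}^{2}+\int_{\mathbb{R}^{3}}V(x)\psi_{\delta,y}^{2}\,dx\to S_{s}^{\frac{3}{2s}}$, $b=N(\psi_{\delta,y})\to0$, and $c=\|\psi_{\delta,y}\|_{2_{s}^{\ast}}^{2_{s}^{\ast}}=S_{s}^{\frac{3}{2s}}$ (by (\ref{energy})), the boundedness of $t_{\delta,y}$ and then $t_{\delta,y}\to1$ as $|y|\to\infty$ follow by the same computation as in (\ref{nw})--(\ref{imply}). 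Finally, decomposing as in the proof of Lemma \ref{vkdelta},
\[
I(\theta\circ T(y,\delta))=I_{\infty}(t_{\delta,y}\psi_{\delta,y})+\frac{1}{2}t_{\delta,y}^{2}\int_{\mathbb{R}^{3}}V(x)\psi_{\delta,y}^{2}\,dx+\frac{1}{4}t_{\delta,y}^{4}N(\psi_{\delta,y}),
\]
and using (\ref{energy}) to write $I_{\infty}(t\psi_{\delta,y})=\bigl(\frac{t^{2}}{2}-\frac{t^{2_{s}^{\ast}}}{2_{s}^{\ast}}\bigr)S_{s}^{\frac{3}{2s}}$, the first term tends to $\frac{s}{3}S_{s}^{\frac{3}{2s}}$ (since $t_{\delta,y}\to1$) while the last two vanish, giving $\lim_{|y|\to\infty}I(\theta\circ T(y,\delta))=\frac{s}{3}S_{s}^{\frac{3}{2s}}$.

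The main obstacle is the vanishing at infinity of $\int_{\mathbb{R}^{3}}V(x)\psi_{\delta,y}^{2}\,dx$ and $N(\psi_{\delta,y})$, which is exactly where the integrability hypotheses $V\in L^{\frac{3}{2s}}(\mathbb{R}^{3})$ and $K\in L^{\frac{6}{6s-3}}(\mathbb{R}^{3})$ are used; the only subtlety is that Lemma \ref{high}(2) is stated up to subsequences, so one passes to a genuine limit along $|y|\to\infty$ by the usual ``every subsequence has a further subsequence converging to the same value $0$'' argument.
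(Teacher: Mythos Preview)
Your proof is correct and follows essentially the same approach as the paper's: you show the $V$-integral and $N(\psi_{\delta,y})$ vanish as $|y|\to\infty$, deduce $t_{\delta,y}\to1$ via the computation (\ref{nw})--(\ref{imply}), and then pass to the limit in the expansion of $I(t_{\delta,y}\psi_{\delta,y})$. The only minor difference is that the paper concludes by a squeeze argument (using $I(\theta\circ T(y,\delta))\geq m=\frac{s}{3}S_{s}^{3/2s}$ from Lemma~\ref{minimization} and the upper bound $I_{\infty}(t_{\delta,y}\psi_{\delta,y})\leq I_{\infty}(\psi_{\delta,y})$), whereas you compute the limit of $I_{\infty}(t_{\delta,y}\psi_{\delta,y})$ directly from $t_{\delta,y}\to1$; both are equivalent once $t_{\delta,y}\to1$ is known.
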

\begin{proof}
It is obvious that $\lim_{|y|\rightarrow +\infty}\int_{\mathbb{R}^{3}}V(x)\psi_{\delta,y}^{2}dx=0$, $\lim_{|y|\rightarrow +\infty}N(\psi_{\delta,y})=0$.
By a similar argument to (\ref{iun})-(\ref{imply}), we have
\begin{align*}
t_{\delta,y}=t(\psi_{\delta,y})\rightarrow 1 \ \ \mbox{as} \ \ |y|\rightarrow +\infty,
\end{align*}
where $t_{\delta,y}$ is the unique positive number such that $t_{\delta,y}\psi_{\delta,y}\in \mathcal{N}$. Then
\begin{align*}
\frac{s}{3}S_{s}^{\frac{3}{2s}}&\leq I(\theta\circ T(y,\delta))\\
&=\frac{1}{2}t_{\delta,y}^{2}\|\psi_{\delta,y}\|_{D^{s,2}}^{2}+\frac{1}{2}t_{\delta,y}^{2}\int_{\mathbb{R}^{3}}V(x)\psi_{\delta,y}^{2}dx+\frac{1}{4}t_{\delta,y}^{4}N(\psi_{\delta,y})\\
&\quad-\frac{1}{2_{s}^{\ast}}t_{\delta,y}^{2_{s}^{\ast}}\|\psi_{\delta,y}\|_{2_{s}^{\ast}}^{2_{s}^{\ast}}\\
&<I_{\infty}(\psi_{\delta,y})+o(1)\\
&=\frac{s}{3}S_{s}^{\frac{3}{2s}}+o(1) \ \ \ \mbox{as} \ \ \ |y|\rightarrow +\infty.
\end{align*}
\end{proof}

By Lemma \ref{vkc}, there exist a positive number $\mu$ such that $\frac{s}{3}S_{s}^{\frac{3}{2s}}+\mu<c_{0}$.
\begin{lemma}\label{deltatheta}
There is a $\delta_{1}:0<\delta<\frac{1}{2}$ such that \\
$(a)$ $I(\theta\circ T(y,\delta))<\frac{s}{3}S_{s}^{\frac{3}{2s}}+\mu$, \ \ $\forall y\in \mathbb{R}^{3}$\\
$(b)$ $\gamma(\theta\circ T(y,\delta))<\frac{1}{2}$, \ \ $\forall y: |y|<\frac{1}{2}$\\
$(c)$ $\big|\beta(\theta\circ T(y,\delta))-\frac{y}{|y|}\big|<\frac{1}{4}$, \ \ $\forall y: |y|\geq \frac{1}{2}$.
\end{lemma}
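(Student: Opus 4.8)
The plan is to produce a single $\delta_1\in(0,\frac{1}{2})$ for which $(a)$, $(b)$ and $(c)$ hold simultaneously, by intersecting four smallness thresholds coming from Lemma~\ref{vkdelta} and from the explicit form (\ref{psi}) of the bubbles $\psi_{\delta,y}$. Condition $(a)$ is handled at once: Lemma~\ref{vkdelta} with $\varepsilon=\mu$ gives $\delta_a>0$ such that $I(\theta\circ T(y,\delta))<\frac{s}{3}S_{s}^{\frac{3}{2s}}+\mu$ for every $y\in\mathbb{R}^{3}$ and every $\delta\in(0,\delta_a]$. For $(b)$ and $(c)$ the point is to control $\theta\circ T(y,\delta)=t_{\delta,y}\psi_{\delta,y}$ through $\beta$ and $\gamma$; since these functionals are homogeneous of degree two (i.e.\ $\beta(tu)=t^{2}\beta(u)$ and $\gamma(tu)=t^{2}\gamma(u)$), one has $\beta(\theta\circ T(y,\delta))=t_{\delta,y}^{2}\beta(\psi_{\delta,y})$ and $\gamma(\theta\circ T(y,\delta))=t_{\delta,y}^{2}\gamma(\psi_{\delta,y})$.

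First I would record the uniform behaviour of the projection constant: $t_{\delta,y}\rightarrow 1$ as $\delta\rightarrow 0$, \emph{uniformly in} $y\in\mathbb{R}^{3}$. Indeed, by (\ref{imply}),
\begin{align*}
(1-t_{\delta,y}^{2_{s}^{\ast}-2})\|\psi_{\delta,y}\|_{D^{s,2}}^{2}+\int_{\mathbb{R}^{3}}V(x)\psi_{\delta,y}^{2}dx+t_{\delta,y}^{2}N(\psi_{\delta,y})=0,
\end{align*}
where $\|\psi_{\delta,y}\|_{D^{s,2}}^{2}=S_{s}^{\frac{3}{2s}}$ by (\ref{energy}), while, for $\delta$ small, the last two terms are $\leq\varepsilon$ uniformly in $y$ by (\ref{rvdy}) and (\ref{npsidelta}). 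Nonnegativity of those terms forces $t_{\delta,y}\geq 1$; and since $2_{s}^{\ast}-2=\frac{4s}{3-2s}>2$ for $s>\frac{3}{4}$, the inequality $t_{\delta,y}^{2_{s}^{\ast}-2}S_{s}^{\frac{3}{2s}}\leq S_{s}^{\frac{3}{2s}}+\varepsilon(1+t_{\delta,y}^{2})$ first forces $\{t_{\delta,y}\}$ to be uniformly bounded, and then $1\leq t_{\delta,y}^{2_{s}^{\ast}-2}\leq 1+C\varepsilon$, so $t_{\delta,y}^{2}\rightarrow 1$ uniformly. Fix $\delta_0>0$ with $1\leq t_{\delta,y}^{2}\leq 2$ for all $y$ and all $\delta\in(0,\delta_0]$.

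Next I would treat $(b)$: if $|y|<\frac{1}{2}$ then $B_{1/2}(y)\subset B_{1}(0)$, hence
\begin{align*}
\gamma(\psi_{\delta,y})=\frac{1}{S_{s}^{\frac{3}{2s}}}\int_{\mathbb{R}^{3}\setminus B_{1}(0)}|(-\Delta)^{\frac{s}{2}}\psi_{\delta,y}|^{2}dx\leq\frac{1}{S_{s}^{\frac{3}{2s}}}\int_{\mathbb{R}^{3}\setminus B_{1/2}(y)}|(-\Delta)^{\frac{s}{2}}\psi_{\delta,y}|^{2}dx\leq C\delta^{\frac{3-2s}{2}}
\end{align*}
uniformly in $y$, the last bound being exactly the computation (\ref{deltay}) with $\varepsilon=\frac{1}{2}$; choosing $\delta_b$ with $2C\delta_b^{\frac{3-2s}{2}}<\frac{1}{2}$ then gives $\gamma(\theta\circ T(y,\delta))=t_{\delta,y}^{2}\gamma(\psi_{\delta,y})<\frac{1}{2}$ for $\delta\leq\min\{\delta_0,\delta_b\}$ and $|y|<\frac{1}{2}$. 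For $(c)$, when $|y|\geq\frac{1}{2}$, the proof of Lemma~\ref{half} gives $\beta(\psi_{\delta,y})=\frac{y}{|y|}+o(1)$ as $\delta\rightarrow 0$, and that $o(1)$ is uniform over $|y|\geq\frac{1}{2}$ (the tail estimate (\ref{deltay}) is uniform in $y$, and for $x\in B_{\varepsilon}(y)$ with $|y|\geq\frac{1}{2}$ one has $|\frac{x}{|x|}-\frac{y}{|y|}|\leq\frac{2\varepsilon}{1/2-\varepsilon}$). Since $|\beta(\psi_{\delta,y})|\leq\|\psi_{\delta,y}\|_{D^{s,2}}^{2}/S_{s}^{\frac{3}{2s}}=1$, we get
\begin{align*}
\Big|\beta(\theta\circ T(y,\delta))-\frac{y}{|y|}\Big|\leq(t_{\delta,y}^{2}-1)|\beta(\psi_{\delta,y})|+\Big|\beta(\psi_{\delta,y})-\frac{y}{|y|}\Big|\longrightarrow 0
\end{align*}
uniformly over $|y|\geq\frac{1}{2}$ as $\delta\rightarrow 0$, so I can pick $\delta_c$ making the right-hand side $<\frac{1}{4}$.

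Finally, setting $\delta_1=\min\{\delta_a,\delta_0,\delta_b,\delta_c,\frac{1}{4}\}$ gives $0<\delta_1<\frac{1}{2}$, and $(a)$--$(c)$ hold at $\delta=\delta_1$. The step I expect to be the main obstacle is the \emph{uniformity in} $y$ of these limits---above all $t_{\delta,y}\rightarrow 1$ uniformly, together with the uniform smallness of $\int_{\mathbb{R}^{3}}V(x)\psi_{\delta,y}^{2}dx$ and $N(\psi_{\delta,y})$, which is already packaged in Lemma~\ref{vkdelta}; once that is secured, $(b)$ and $(c)$ reduce to the explicit, dilation- and translation-covariant decay of the Aubin--Talenti bubbles $\psi_{\delta,y}$, which is automatically uniform in $y$.
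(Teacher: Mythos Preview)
Your argument is correct and follows the same route as the paper: part $(a)$ via Lemma~\ref{vkdelta}, part $(b)$ via the tail estimate (\ref{deltay}) combined with $t_{\delta,y}\to 1$, and part $(c)$ via Lemma~\ref{half} together with $t_{\delta,y}\to 1$. The paper's proof is terser---it simply cites (\ref{tdelta}) and Lemma~\ref{half} without spelling out the uniformity in $y$---whereas you have made that uniformity explicit, which is the right thing to do and is indeed supported by (\ref{rvdy})--(\ref{npsidelta}); your added triangle-inequality splitting in $(c)$ to separate the $(t_{\delta,y}^{2}-1)$ contribution from the $\beta(\psi_{\delta,y})-y/|y|$ contribution is a clean way to handle the homogeneity that the paper leaves implicit.
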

\begin{proof}
By Lemma \ref{vkdelta}, $(a)$ holds. By (\ref{equivalent}) and Proposition 2.2 in \cite{DPV},
\begin{align*}
\gamma(\theta\circ T(y,\delta))&=\frac{t_{\delta,y}^{2}}{S_{s}^{\frac{3}{2s}}}\int_{\mathbb{R}^{3}}\sigma(x)|(-\Delta)^{\frac{s}{2}}\psi_{\delta,y}|^{2}dx\\
&=\frac{t_{\delta,y}^{2}}{S_{s}^{\frac{3}{2s}}}\int_{\mathbb{R}^{3}\backslash B_{1}(0)}|(-\Delta)^{\frac{s}{2}}\psi_{\delta,y}|^{2}dx\\
&=\frac{t_{\delta,y}^{2}}{S_{s}^{\frac{3}{2s}}}\int_{\mathbb{R}^{3}\backslash B_{1}(y)}|(-\Delta)^{\frac{s}{2}}\psi_{\delta,0}|^{2}dx\\
&\leq Ct_{\delta,y}^{2}\int_{\mathbb{R}^{3}\backslash B_{1}(y)}|\nabla \psi_{\delta,0}|^{2}dx\\
&=\tilde{C}t_{\delta,y}^{2}\delta^{\frac{3-2s}{2}}\int_{\mathbb{R}^{3}\backslash B_{1}(y)}\frac{|x|^{2}}{(\delta^{2}+|x|^{2})^{5-2s}}dx,
\end{align*}
this together with (\ref{tdelta}) yield that $(b)$ holds. By Lemma \ref{half} and (\ref{tdelta}), we get $(c)$.
\end{proof}
\begin{lemma}\label{deltagamma}
There exists a $\delta_{2}>\frac{1}{2}$ such that \\
$(a)$ $I(\theta\circ T(y,\delta))<\frac{s}{3}S_{s}^{\frac{3}{2s}}+\mu$, \ \ $\forall y\in \mathbb{R}^{3}$\\
$(b)$ $\gamma(\theta\circ T(y,\delta))>\frac{1}{2}$, \ \ $\forall y: |y|>\frac{1}{2}$.
\end{lemma}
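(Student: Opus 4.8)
The plan is to read off $(a)$ from Lemma~\ref{vkdelta} and to prove $(b)$ by showing that $\gamma(\psi_{\delta,y})\to 1$ as $\delta\to+\infty$ \emph{uniformly in $y$}, together with the elementary bound $t_{\delta,y}\geq 1$. For $(a)$: apply Lemma~\ref{vkdelta} with $\varepsilon=\mu$; this produces a threshold $\delta_{2}(\mu)$ such that $I(\theta\circ T(y,\delta))<\frac{s}{3}S_{s}^{\frac{3}{2s}}+\mu$ for every $y\in\mathbb{R}^{3}$ and every $\delta\geq\delta_{2}(\mu)$, and since enlarging the threshold only preserves the inequality I may assume $\delta_{2}(\mu)>\frac12$.

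For $(b)$, recall $\theta\circ T(y,\delta)=t_{\delta,y}\psi_{\delta,y}$, where $t_{\delta,y}=t(\psi_{\delta,y})$ is the unique positive number with $t_{\delta,y}\psi_{\delta,y}\in\mathcal{N}$. Since $\gamma(tu)=t^{2}\gamma(u)$ and $\sigma\equiv 1$ off $B_{1}(0)$, identity (\ref{energy}) gives
\[
\gamma(\theta\circ T(y,\delta))=t_{\delta,y}^{2}\,\gamma(\psi_{\delta,y})
= t_{\delta,y}^{2}\left(1-\frac{1}{S_{s}^{\frac{3}{2s}}}\int_{B_{1}(0)}|(-\Delta)^{\frac{s}{2}}\psi_{\delta,y}|^{2}\,dx\right).
\]
Because $V\geq 0$ and $N\geq 0$, the identity that characterizes $t_{\delta,y}$ (the analogue of (\ref{imply}), namely $(1-t_{\delta,y}^{2_{s}^{\ast}-2})\|\psi_{\delta,y}\|_{D^{s,2}}^{2}+\int_{\mathbb{R}^{3}}V\psi_{\delta,y}^{2}+t_{\delta,y}^{2}N(\psi_{\delta,y})=0$) forces $1-t_{\delta,y}^{2_{s}^{\ast}-2}\leq 0$, i.e.\ $t_{\delta,y}\geq 1$. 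Hence $\gamma(\theta\circ T(y,\delta))\geq\gamma(\psi_{\delta,y})$, and it suffices to show that $\int_{B_{1}(0)}|(-\Delta)^{\frac{s}{2}}\psi_{\delta,y}|^{2}\,dx\to 0$ as $\delta\to+\infty$, uniformly in $y$.

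To get this uniform decay, translate by $y$: the integral equals $\int_{B_{1}(-y)}|(-\Delta)^{\frac{s}{2}}\psi_{\delta,0}|^{2}\,dx$, and, exactly as in (\ref{deltay}) via Proposition~2.2 of \cite{DPV},
\[
\int_{B_{1}(-y)}|(-\Delta)^{\frac{s}{2}}\psi_{\delta,0}|^{2}\,dx\leq C\int_{B_{1}(-y)}|\nabla\psi_{\delta,0}|^{2}\,dx
=\widetilde{C}\,\delta^{3-2s}\int_{B_{1}(-y)}\frac{|x|^{2}}{(\delta^{2}+|x|^{2})^{5-2s}}\,dx .
\]
Since $(\delta^{2}+|x|^{2})^{5-2s}\geq(\delta^{2}+|x|^{2})^{4-2s}|x|^{2}\geq\delta^{2(4-2s)}|x|^{2}$ (using $s\leq 1$), the integrand is at most $\delta^{-2(4-2s)}$, so, using $|B_{1}(-y)|=|B_{1}(0)|$, the right-hand side is bounded by $C'\delta^{3-2s-2(4-2s)}=C'\delta^{2s-5}\to 0$, uniformly in $y$. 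Therefore $\gamma(\psi_{\delta,y})\to 1$ uniformly in $y$; choose $\delta_{2}'>\frac12$ with $\gamma(\psi_{\delta,y})>\frac12$ for all $\delta\geq\delta_{2}'$ and all $y$. Setting $\delta_{2}:=\max\{\delta_{2}(\mu),\delta_{2}'\}>\frac12$, part $(a)$ holds by the first paragraph and part $(b)$ holds since $\gamma(\theta\circ T(y,\delta))\geq\gamma(\psi_{\delta,y})>\frac12$ (in particular for $|y|>\frac12$).

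The calculations are routine; the one point requiring care is the uniformity in $y$. The Dirichlet energy $\int_{\mathbb{R}^{3}}|(-\Delta)^{\frac{s}{2}}\psi_{\delta,y}|^{2}\,dx=S_{s}^{\frac{3}{2s}}$ is independent of $\delta$, so $(b)$ is really the assertion that as $\delta\to+\infty$ this fixed amount of energy spreads out enough that only a vanishing fraction remains in a unit ball, \emph{no matter where that ball is centered}. Passing to the translated profile $\psi_{\delta,0}$ and exploiting that the pointwise size of $|\nabla\psi_{\delta,0}|^{2}$ on any unit ball is $O(\delta^{-2(4-2s)})$ once $\delta$ is large is what makes the bound uniform; without this one would only obtain convergence along special sequences $(\delta_{n},y_{n})$, as in the proof of Lemma~\ref{vkc}.
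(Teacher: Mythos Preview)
Your argument is correct and follows the same skeleton as the paper: part $(a)$ is read off from Lemma~\ref{vkdelta}, and part $(b)$ comes from writing $\gamma(\theta\circ T(y,\delta))=t_{\delta,y}^{2}\bigl(1-S_{s}^{-3/(2s)}\int_{B_{1}(0)}|(-\Delta)^{s/2}\psi_{\delta,y}|^{2}\,dx\bigr)$ and showing that the integral on $B_{1}(0)$ tends to~$0$ as $\delta\to+\infty$ via the gradient bound (\ref{deltay}).

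The one genuine difference is in how you deal with the factor $t_{\delta,y}^{2}$. The paper invokes $\lim_{\delta\to+\infty}t_{\delta,y}=1$ (cf.~(\ref{tdelta})), which then requires that this limit and the integral decay both be uniform in $y$ to produce a single $\delta_{2}$ valid for all $y$. You instead observe from the Nehari identity (the analogue of (\ref{imply})) together with $V\geq 0$, $N\geq 0$ that $t_{\delta,y}\geq 1$ for \emph{every} $(y,\delta)$, so $\gamma(\theta\circ T(y,\delta))\geq\gamma(\psi_{\delta,y})$ outright; then the only uniformity needed is for the integral, which your pointwise bound $\delta^{3-2s}|x|^{2}(\delta^{2}+|x|^{2})^{-(5-2s)}\leq\delta^{2s-5}$ supplies directly. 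This is a cleaner and more self-contained route: it sidesteps checking uniform convergence of $t_{\delta,y}$ and makes explicit the uniformity in $y$ that the paper's proof leaves implicit.
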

\begin{proof}
By Lemma \ref{vkdelta}, $(a)$ holds. Since
\begin{align*}
\lim_{\delta\rightarrow +\infty}\int_{B_{1}(0)}|(-\Delta)^{\frac{s}{2}}\psi_{\delta,y}|^{2}dx=0,
\end{align*}
and
\begin{align*}
\lim_{\delta\rightarrow +\infty}t_{\delta,y}=1,
\end{align*}
we have
\begin{align*}
\gamma(\theta\circ T(y,\delta))=t_{\delta,y}\left(1-\frac{1}{S_{s}^{\frac{3}{2s}}}\int_{B_{1}(0)}|(-\Delta)^{\frac{s}{2}}\psi_{\delta,y}|^{2}dx\right)\rightarrow 1 \ \ \mbox{as} \ \ \delta\rightarrow +\infty,
\end{align*}
then $(b)$ holds.
\end{proof}
\begin{lemma}\label{rtheta}
There exists $R>0$ such that \\
$(a)$ $I(\theta\circ T(y,\delta))<\frac{s}{3}S_{s}^{\frac{3}{2s}}+\mu$, \ \ $\forall y:|y|\geq R$ and $\delta\in [\delta_{1},\delta_{2}]$\\
$(b)$ $(\beta(\theta\circ T(y,\delta))\cdot y)_{\mathbb{R}^{3}}>0$, \ \ $\forall |y|\geq R$ and $\delta\in [\delta_{1},\delta_{2}]$.
\end{lemma}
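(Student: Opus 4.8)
The plan is to upgrade the pointwise-in-$\delta$ limits of Lemma~\ref{vky} and the asymptotics of Lemma~\ref{half} to statements that hold uniformly for $\delta$ in the compact interval $[\delta_{1},\delta_{2}]$, where $0<\delta_{1}<\frac{1}{2}<\delta_{2}<+\infty$ are the constants fixed in Lemmas~\ref{deltatheta} and~\ref{deltagamma}. The key ingredient is the uniform vanishing
\begin{align}\label{rthetavan}
\sup_{\delta\in[\delta_{1},\delta_{2}]}\int_{\mathbb{R}^{3}}V(x)\psi_{\delta,y}^{2}\,dx\rightarrow 0,\qquad \sup_{\delta\in[\delta_{1},\delta_{2}]}N(\psi_{\delta,y})\rightarrow 0\qquad\mbox{as}\ |y|\rightarrow+\infty.
\end{align}
To prove the first limit I would split $\mathbb{R}^{3}=B_{\rho}(0)\cup(\mathbb{R}^{3}\backslash B_{\rho}(0))$: on the complement $\|V\|_{L^{3/2s}(\mathbb{R}^{3}\backslash B_{\rho}(0))}$ is as small as we wish for $\rho$ large, while on $B_{\rho}(0)$ formula (\ref{psi}) gives
\begin{align*}
\int_{B_{\rho}(0)}|\psi_{\delta,y}|^{2_{s}^{\ast}}\,dx=b_{s}^{2_{s}^{\ast}}\delta^{3}\int_{B_{\rho}(0)}\frac{dx}{(\delta^{2}+|x-y|^{2})^{3}}\leq C\,\delta_{2}^{3}\int_{\{|z|\geq |y|-\rho\}}\frac{dz}{|z|^{6}},
\end{align*}
which tends to $0$ as $|y|\rightarrow+\infty$, uniformly for $\delta\in[\delta_{1},\delta_{2}]$; a H\"older estimate exactly as in (\ref{vpsi}) then yields the first limit in (\ref{rthetavan}), and the second is obtained in the same way, using $K\in L^{6/(6s-3)}(\mathbb{R}^{3})$ together with (\ref{dt2}) and (\ref{phi2}).

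Once (\ref{rthetavan}) is at hand, running the computation (\ref{nw})--(\ref{imply}) with $w_{n}$ replaced by $\psi_{\delta,y}$ --- and using $\|\psi_{\delta,y}\|_{D^{s,2}}^{2}=\|\psi_{\delta,y}\|_{2_{s}^{\ast}}^{2_{s}^{\ast}}=S_{s}^{\frac{3}{2s}}$ from (\ref{energy}) --- shows that $\sup_{\delta\in[\delta_{1},\delta_{2}]}|t_{\delta,y}-1|\rightarrow 0$ as $|y|\rightarrow+\infty$, where $t_{\delta,y}=t(\psi_{\delta,y})$. Substituting this and (\ref{rthetavan}) into the expression for $I(\theta\circ T(y,\delta))$ displayed at the end of the proof of Lemma~\ref{vkdelta} gives
\begin{align*}
I(\theta\circ T(y,\delta))=I_{\infty}(t_{\delta,y}\psi_{\delta,y})+o(1)=\frac{s}{3}S_{s}^{\frac{3}{2s}}+o(1)
\end{align*}
uniformly on $[\delta_{1},\delta_{2}]$; choosing $R$ so large that the $o(1)$ term is $<\mu$ whenever $|y|\geq R$ and $\delta\in[\delta_{1},\delta_{2}]$ proves $(a)$.

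For $(b)$, write $\psi_{\delta,y}(x)=\psi_{\delta,0}(x-y)$ and set $g_{\delta}(x)=|(-\Delta)^{\frac{s}{2}}\psi_{\delta,0}(x)|^{2}$, so that $\int_{\mathbb{R}^{3}}g_{\delta}\,dx=S_{s}^{\frac{3}{2s}}$ by (\ref{energy}) and, after the change of variables $z=x-y$,
\begin{align*}
\beta(\theta\circ T(y,\delta))=\frac{t_{\delta,y}^{2}}{S_{s}^{\frac{3}{2s}}}\int_{\mathbb{R}^{3}}\frac{z+y}{|z+y|}\,g_{\delta}(z)\,dz.
\end{align*}
Since $g_{\delta}(x)=\delta^{-3}g_{1}(x/\delta)$ and $\delta$ stays in a compact subset of $(0,+\infty)$, the family $\{g_{\delta}\}_{\delta\in[\delta_{1},\delta_{2}]}$ is tight in $L^{1}(\mathbb{R}^{3})$; together with $\frac{z+y}{|z+y|}\rightarrow\frac{y}{|y|}$ for each fixed $z$ and the elementary bound $\big|\frac{z+y}{|z+y|}-\frac{y}{|y|}\big|\leq\frac{2|z|}{|y|}$, this yields
\begin{align*}
\beta(\theta\circ T(y,\delta))=t_{\delta,y}^{2}\Big(\frac{y}{|y|}+o(1)\Big)\qquad\mbox{uniformly for}\ \delta\in[\delta_{1},\delta_{2}]\ \mbox{as}\ |y|\rightarrow+\infty,
\end{align*}
hence $\big(\beta(\theta\circ T(y,\delta))\cdot y\big)_{\mathbb{R}^{3}}=t_{\delta,y}^{2}\,|y|\,(1+o(1))>0$ once $|y|$ is large. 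Enlarging $R$ if necessary gives $(b)$.

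The step I expect to be the main obstacle is precisely the uniformity in $\delta$ of all these limits: the pointwise facts are essentially Lemmas~\ref{vky} and~\ref{half}, but producing a single radius $R$ that works for the whole interval $[\delta_{1},\delta_{2}]$ forces one to carry out the H\"older and vanishing estimates with constants depending only on $\delta_{1}$ and $\delta_{2}$, i.e. to exploit the compactness of $[\delta_{1},\delta_{2}]$ (equivalently, the equicontinuity on that interval of $\delta\mapsto I(\theta\circ T(y,\delta))$ and $\delta\mapsto\beta(\theta\circ T(y,\delta))$).
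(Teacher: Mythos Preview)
Your proposal is correct and follows essentially the same approach as the paper: the paper's proof simply invokes ``Lemma~\ref{vky} and the compactness of $[\delta_{1},\delta_{2}]$'' for part~$(a)$ and ``a similar argument as that in the proof of Lemma~\ref{half}'' together with $t_{\delta,y}\to 1$ for part~$(b)$, whereas you spell out explicitly the uniform-in-$\delta$ vanishing estimates and the tightness argument that make these invocations rigorous. One minor quibble: the inequality $\big|\frac{z+y}{|z+y|}-\frac{y}{|y|}\big|\leq\frac{2|z|}{|y|}$ is not quite right as stated (the denominator should be $|z+y|$), but splitting into $|z|\leq|y|/2$ and $|z|>|y|/2$ and using tightness of $\{g_{\delta}\}$ on the latter region fixes this immediately.
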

\begin{proof}
Since
\begin{align}\label{ty}
t_{\delta,y}\rightarrow 1 \ \mbox{as} \ |y|\rightarrow +\infty,
\end{align}
by Lemma \ref{vky} and the compactness of $[\delta_{1},\delta_{2}]$, we can get $R_{1}$ big enough such that
\begin{align*}
I(\theta\circ T(y,\delta))<\frac{s}{3}S_{s}^{\frac{3}{2s}}+\mu, \ \ \forall y:|y|\geq R \  \mbox{and} \ \delta\in [\delta_{1},\delta_{2}].
\end{align*}
For $|y|$ large enough, by (\ref{ty}) and a similar argument as that in the proof of Lemma \ref{half},  the result follows.
\end{proof}

Let $\delta_{1}$, $\delta_{2}$ and $R$ be the constant in Lemma \ref{deltatheta}, Lemma \ref{betagamma} and Lemma \ref{rtheta}, respectively. Define a bounded domain $D\subset \mathbb{R}^{3}\times \mathbb{R}$ by
\begin{align*}
D=\left\{(y,\delta)\in \mathbb{R}^{3}\times \mathbb{R}: |y|\leq R, \ \delta_{1}\leq \delta\leq \delta_{2}\right\},
\end{align*}
and define the map $\vartheta:D\rightarrow \mathbb{R}^{3}\times \mathbb{R}^{+}$ by
\begin{align*}
\vartheta(y,\delta)=\left(\beta\circ\theta\circ T(y,\delta),\gamma\circ\theta\circ T(y,\delta)\right).
\end{align*}
\begin{lemma}\label{topologicaldegree}
Assume that $V\geq 0$, $V\in L^{\frac{3}{2s}}(\mathbb{R}^{3})$, $K\geq 0$, $K\in L^{\frac{6}{6s-3}}(\mathbb{R}^{3})$ and $\|V\|_{\frac{3}{2s}}+\|K\|_{\frac{6}{6s-3}}>0$. Then
\begin{align*}
\emph{deg}\left(\vartheta,D,(0,\frac{1}{2})\right)=1.
\end{align*}
\end{lemma}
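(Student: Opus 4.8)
The plan is to compute the Brouwer degree by a single homotopy from $\vartheta$ to an explicit product map. First I would record, using the three lemmas just proved, that $(0,\frac{1}{2})\notin\vartheta(\partial D)$, so that $\mathrm{deg}(\vartheta,D,(0,\frac{1}{2}))$ is well defined. Write $\partial D$ as the union of the faces $\{|y|=R\}\times[\delta_1,\delta_2]$, $\{|y|\le R\}\times\{\delta_1\}$ and $\{|y|\le R\}\times\{\delta_2\}$. On the first face, Lemma \ref{rtheta}$(b)$ gives $\beta(\theta\circ T(y,\delta))\cdot y>0$, so the $\beta$-component is nonzero. On $\{\delta=\delta_1\}$, Lemma \ref{deltatheta}$(b)$ gives $\gamma(\theta\circ T(y,\delta_1))<\frac{1}{2}$ when $|y|<\frac{1}{2}$, while Lemma \ref{deltatheta}$(c)$ gives $|\beta(\theta\circ T(y,\delta_1))-\frac{y}{|y|}|<\frac{1}{4}$ (hence the $\beta$-component is nonzero) when $\frac{1}{2}\le|y|\le R$. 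On $\{\delta=\delta_2\}$, Lemma \ref{deltagamma}$(b)$ gives $\gamma(\theta\circ T(y,\delta_2))>\frac{1}{2}$ when $|y|>\frac{1}{2}$, and for $|y|\le\frac{1}{2}$ the same inequality holds after enlarging $\delta_2$, since $\gamma(\psi_{\delta,y})\to1$ and $t_{\delta,y}\to1$ uniformly for $|y|\le R$ as $\delta\to+\infty$ (from the scaling identity for $\psi_{\delta,y}$ and the uniform bounds \eqref{rvdy}, \eqref{npsidelta}). In every case $\vartheta(y,\delta)\neq(0,\frac{1}{2})$.

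Next, fix a continuous strictly increasing function $g:[\delta_1,\delta_2]\to(0,\infty)$ with $g(\delta_1)<\frac{1}{2}<g(\delta_2)$ and define $H:[0,1]\times D\to\mathbb{R}^3\times\mathbb{R}^+$ by
\[
H(\tau,y,\delta)=\Big((1-\tau)\,\beta(\theta\circ T(y,\delta))+\tau\,\frac{y}{R},\ \ (1-\tau)\,\gamma(\theta\circ T(y,\delta))+\tau\,g(\delta)\Big).
\]
Its last coordinate is positive (a sum of nonnegative terms, one of which is strictly positive), so $H$ maps into $\mathbb{R}^3\times\mathbb{R}^+$; and $H$ is continuous because $(y,\delta)\mapsto\psi_{\delta,y}$ is continuous into $D^{s,2}(\mathbb{R}^3)$, $\beta$ and $\gamma$ are continuous on $D^{s,2}(\mathbb{R}^3)$, and $(y,\delta)\mapsto t_{\delta,y}$ is continuous, $t_{\delta,y}$ being the unique positive zero of a function of $t$ with nonvanishing derivative there (implicit function theorem, as in Lemma 3.3 of \cite{Ruiz}). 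I would then check $H(\tau,\cdot,\cdot)\neq(0,\frac{1}{2})$ on $\partial D$ for all $\tau\in[0,1]$, which is exactly what the boundary estimates were arranged for: pairing the first coordinate with $y$ on $\{|y|=R\}$ gives $(1-\tau)\,\beta(\theta\circ T(y,\delta))\cdot y+\tau R>0$; on $\{\delta=\delta_1\}$ the last coordinate is a convex combination of numbers $<\frac{1}{2}$ when $|y|<\frac{1}{2}$, and pairing the first coordinate with $\frac{y}{|y|}$ gives a value $\ge(1-\tau)\frac{3}{4}+\tau\frac{|y|}{R}>0$ when $\frac{1}{2}\le|y|\le R$; on $\{\delta=\delta_2\}$ the last coordinate is a convex combination of numbers $>\frac{1}{2}$.

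By homotopy invariance, $\mathrm{deg}(\vartheta,D,(0,\frac{1}{2}))=\mathrm{deg}(H(1,\cdot,\cdot),D,(0,\frac{1}{2}))$, and $H(1,y,\delta)=(\frac{y}{R},g(\delta))$ is a product map, so its degree factors as $\mathrm{deg}(y\mapsto\frac{y}{R},B_R(0),0)\cdot\mathrm{deg}(g,(\delta_1,\delta_2),\frac{1}{2})=1\cdot1=1$. I expect the only genuinely delicate point to be the verification that the homotopy avoids $(0,\frac{1}{2})$ near $|y|=\frac{1}{2}$ and on the face $\{\delta=\delta_2\}$ with $|y|\le\frac{1}{2}$, where one must invoke the uniform-in-$y$ convergences above rather than the literal statements of Lemmas \ref{deltatheta}--\ref{deltagamma}; everything else is a routine Brouwer-degree computation, with the real content sitting in those three geometric lemmas.
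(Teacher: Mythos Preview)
Your proof is correct and follows essentially the same strategy as the paper: a linear homotopy from $\vartheta$ to an explicit map of degree $1$, with boundary avoidance checked via Lemmas \ref{deltatheta}--\ref{rtheta} on the same decomposition of $\partial D$. The only difference is the choice of target map: the paper homotopes directly to the identity $(y,\delta)\mapsto(y,\delta)$, exploiting that $\delta_1<\tfrac12<\delta_2$ and $(0,\tfrac12)\in D$, so your auxiliary function $g$ and the rescaling $y\mapsto y/R$ are unnecessary. As a small bonus, you explicitly treat the case $|y|\le\tfrac12$ on the face $\{\delta=\delta_2\}$, which the \emph{statement} of Lemma \ref{deltagamma}$(b)$ omits but which both arguments require; the paper tacitly uses that the proof of that lemma in fact yields $\gamma(\theta\circ T(y,\delta_2))>\tfrac12$ for all $y$.
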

\begin{proof}
We consider the homotopy
\begin{align*}
\zeta(y,\delta,s)=(1-s)(y,\delta)+s\vartheta(y,\delta).
\end{align*}
By the homotopy invariance of the topological degree, and by the fact that
\begin{align*}
\emph{deg}\left(id,D,(0,\frac{1}{2})\right)=1,
\end{align*}
we need to prove that
\begin{align*}
\zeta(y,\delta,s)\neq (0,\frac{1}{2}) \ \mbox{for any} \ \ (y,\delta)\in \partial D \ \mbox{and} \ s\in [0,1].
\end{align*}
We have
\begin{align*}
\partial D=\Gamma_{1}\cup\Gamma_{2}\cup \Gamma_{3}\cup \Gamma_{4},
\end{align*}
where
\begin{align*}
\Gamma_{1}&=\{(y,\delta_{1}):|y|<\frac{1}{2}\}\\
\Gamma_{2}&=\{(y,\delta_{1}):\frac{1}{2}\leq|y|\leq R\}\\
\Gamma_{3}&=\{(y,\delta_{2}):|y|\leq R\}\\
\Gamma_{4}&=\{(y,\delta):|y|=R,\delta\in[\delta_{1},\delta_{2}]\}.
\end{align*}
If $(y,\delta)\in \Gamma_{1}$, by Lemma \ref{deltatheta} $(b)$,
\begin{align*}
(1-s)\delta_{1}+s\gamma\circ\theta\circ T(y,\delta_{1})<\frac{1}{2}.
\end{align*}
If $(y,\delta)\in \Gamma_{2}$, by Lemma \ref{deltatheta} $(c)$,
\begin{align*}
\big|\beta(\theta\circ T(y,\delta))-\frac{y}{|y|}\big|<\frac{1}{4},
\end{align*}
then $\forall s\in [0,1]$,
\begin{align*}
|(1-s)y+s\beta(\theta\circ T(y,\delta_{1}))|&\geq \left|(1-s)y+s\frac{y}{|y|}\right|-\left|s\beta(\theta\circ T(y,\delta_{1}))-s\frac{y}{|y|}\right|\\
&\geq (1-s)|y|+s-\frac{s}{4}\\
&\geq \frac{1}{2}.
\end{align*}
If $(y,\delta)\in \Gamma_{3}$,
\begin{align*}
(1-s)\delta_{2}+s\gamma\circ \theta \circ T(y,\delta_{2})>\frac{1}{2} \ \mbox{for any} \ s\in [0,1].
\end{align*}
If $(y,\delta)\in \Gamma_{4}$, by Lemma \ref{rtheta} $(b)$,
\begin{align*}
([(1-s)y+s\beta\circ \theta \circ T(y,\delta)]\cdot y)>0 \ \mbox{for any} \ s\in [0,1].
\end{align*}
Therefore,
\begin{align*}
deg(\vartheta,D,(0,\frac{1}{2}))=deg(id,D,(0,\frac{1}{2}))=1.
\end{align*}
\end{proof}

\textbf{Proof of Theorem \ref{boundstate}.} In order to apply the linking theorem in \cite{Struwe}, we define
\begin{align*}
Q=\theta\circ T(D), \ \mathcal{M}=\left\{u\in \mathcal{N}: \alpha(u)=(\beta(u),\gamma(u))=(0,\frac{1}{2})\right\}.
\end{align*}
We claim that $\mathcal{M}$ links $\partial Q$, that is, \\
$(a)$ $\partial Q\cap \mathcal{M}=\varnothing$;\\
$(b)$ $h(Q)\cap \mathcal{M}\neq \varnothing$ for any $h\in \Gamma=\{h\in C(Q,\mathcal{N}):h(\partial Q)=id\}$. \\
In fact, if $u\in \theta\circ T(\partial D)$, by Lemma \ref{deltatheta} $(a)$,  Lemma \ref{deltagamma} $(a)$,  Lemma \ref{rtheta} $(a)$,
\begin{align*}
I(u)<m+\mu<c_{\mathcal{M}},
\end{align*}
then $u\notin \mathcal{M}$.

To prove $(b)$, for any $h \in\Gamma$, we define $\eta: D\rightarrow \mathbb{R}^{3}\times \mathbb{R}+$ by
\begin{align*}
\eta(y,\delta)=\left(\beta\circ h\circ \theta\circ T(y,\delta),\gamma\circ h\circ \theta\circ T(y,\delta)\right).
\end{align*}
Since $h(\partial D)=id$, then
\begin{align*}
\eta(y,\delta)=\left(\beta\circ \theta\circ T(y,\delta),\gamma\circ \theta\circ T(y,\delta)\right)=\vartheta(y,\delta) \ \mbox{for any} \ (y,\delta)\in \partial D.
\end{align*}
This together with Lemma \ref{topologicaldegree} yield that
\begin{align*}
deg(\eta,D,(0,\frac{1}{2}))=deg(\vartheta,D,(0,\frac{1}{2}))=1.
\end{align*}
Then there exists $(y',\delta')\in D$ such that $h\circ \theta \circ T(y',\delta')\in \mathcal{M}$.

Define
\begin{align}\label{level}
d=\inf_{h\in \Gamma}\max_{u\in Q}I(h(u)).
\end{align}
By linking theorem, $d\geq c_{\mathcal{M}}>\frac{s}{3}S_{s}^{\frac{3}{2s}}$. From $Q=\theta\circ T(D)$ and (\ref{level}), it follows that
\begin{align*}
d\leq \max_{u\in Q}I(u)\leq \sup_{(\delta,y)\in D}I(t_{\delta,y}\psi_{\delta,y}).
\end{align*}
By $t_{\delta,y}\psi_{\delta,y}\in \mathcal{N}$, we have
\begin{align}\label{tdeltaypsi}
t_{\delta,y}^{2}\|\psi_{\delta,y}\|_{D^{s,2}}^{2}+t_{\delta,y}^{2}\int_{\mathbb{R}^{3}}V(x)\psi_{\delta,y}^{2}dx+t_{\delta,y}^{4}N(\psi_{\delta,y})
=t_{\delta,y}^{2_{s}^{\ast}}\|\psi_{\delta,y}\|_{2_{s}^{\ast}}^{2_{s}^{\ast}},
\end{align}
this together with $\|\psi_{\delta,y}\|_{2_{s}^{\ast}}^{2_{s}^{\ast}}=\|\psi_{\delta,y}\|_{D^{s,2}}^{2}$ yield that
\begin{align}\label{deduce}
(1-t_{\delta,y}^{2_{s}^{\ast}-2})\|\psi_{\delta,y}\|_{D^{s,2}}^{2}+\int_{\mathbb{R}^{3}}V(x)\psi_{\delta,y}^{2}dx+t_{\delta,y}^{2}N(\psi_{\delta,y})=0.
\end{align}
Since $\psi_{\delta,y}>0$, $V(x)\geq 0$, $K(x)\geq 0$ and $\|V\|_{\frac{3}{2s}}+\|K\|_{\frac{6}{6s-3}}>0$, by (\ref{deduce}), we get that $t_{\delta,y}>1$.
From (\ref{tdeltaypsi}) and the H\"{o}lder inequality, it follows that
\begin{align}\label{holder}
t_{\delta,y}^{2_{s}^{\ast}-2}\|\psi_{\delta,y}\|_{2_{s}^{\ast}}^{2_{s}^{\ast}}&=\|\psi_{\delta,y}\|_{D^{s,2}}^{2}+\int_{\mathbb{R}^{3}}V(x)\psi_{\delta,y}^{2}dx
+t_{\delta,y}^{2}\mathcal{N}(\psi_{\delta,y})\nonumber\\
&<t_{\delta,y}^{2}\|\psi_{\delta,y}\|_{D^{s,2}}^{2}+t_{\delta,y}^{2}\int_{\mathbb{R}^{3}}V(x)\psi_{\delta,y}^{2}dx
+t_{\delta,y}^{2}\mathcal{N}(\psi_{\delta,y})\nonumber\\
&\leq t_{\delta,y}^{2}\|\psi_{\delta,y}\|_{D^{s,2}}^{2}+t_{\delta,y}^{2}\|V\|_{\frac{3}{2s}}\|\psi_{\delta,y}\|_{2_{s}^{\ast}}^{2}\nonumber\\
&\quad+t_{\delta,y}^{2}S_{s}^{-\frac{1}{2}}\|K\|_{\frac{6}{6s-3}}\|\Phi(\psi_{\delta,y})\|_{D^{s,2}}\|\psi_{\delta,y}\|_{2_{s}^{\ast}}^{2}.
\end{align}
By (\ref{dt2}) and (\ref{phisu}),
\begin{align}\label{Phiu}
\|\Phi(u)\|_{D^{s,2}}\leq S_{s}^{-\frac{3}{2}}\|K\|_{\frac{6}{6s-3}}\|u\|_{D^{s,2}}^{2}.
\end{align}
By (\ref{deduce}), (\ref{Phiu}) and $\|\psi_{\delta,y}\|_{2_{s}^{\ast}}^{2_{s}^{\ast}}=\|\psi_{\delta,y}\|_{D^{s,2}}^{2}=S_{s}^{\frac{3}{2s}}$, we have
\begin{align}\label{upp}
t_{\delta,y}^{2_{s}^{\ast}-4}<1+S_{s}^{-1}\|V\|_{\frac{3}{2s}}+S_{s}^{\frac{3}{2s}-3}\|K\|_{\frac{6}{6s-3}}^{2}.
\end{align}
Then, by  (\ref{corresponding}), (\ref{tdeltaypsi}), (\ref{upp}) and (\ref{svk}),
\begin{align*}
I(t_{\delta,y}\psi_{\delta,y})&=\frac{1}{2}t_{\delta,y}^{2}\|\psi_{\delta,y}\|_{D^{s,2}}^{2}+\frac{1}{2}t_{\delta,y}^{2}\int_{\mathbb{R}^{3}}V(x)\psi_{\delta,y}^{2}dx
+\frac{1}{4}t_{\delta,y}^{4}N(\psi_{\delta,y})-\frac{1}{2_{s}^{\ast}}t_{\delta,y}^{2_{s}^{\ast}}\|\psi_{\delta,y}\|_{2_{s}^{\ast}}^{2_{s}^{\ast}}\\
&=\frac{s}{3}t_{\delta,y}^{2}\|\psi_{\delta,y}\|_{D^{s,2}}^{2}+\frac{s}{3}t_{\delta,y}^{2}\int_{\mathbb{R}^{3}}V(x)\psi_{\delta,y}^{2}dx
+\frac{4s-3}{12}t_{\delta,y}^{4}N(\psi_{\delta,y})\\
&\leq \frac{s}{3}t_{\delta,y}^{2}(\|\psi_{\delta,y}\|_{D^{s,2}}^{2}+\int_{\mathbb{R}^{3}}V(x)\psi_{\delta,y}^{2}dx+t_{\delta,y}^{2}N(\psi_{\delta,y}))\\
&=\frac{s}{3}t_{\delta,y}^{2_{s}^{\ast}}\|\psi_{\delta,y}\|_{2_{s}^{\ast}}^{2_{s}^{\ast}}\\
&<\frac{s}{3}S_{s}^{\frac{3}{2s}}(1+S_{s}^{-1}\|V\|_{\frac{3}{2s}}+S_{s}^{\frac{3}{2s}-3}\|K\|_{\frac{6}{6s-3}}^{2})^{\frac{2_{s}^{\ast}}{2_{s}^{\ast}-4}}\\
&\leq \frac{2s}{3}S_{s}^{\frac{3}{2s}}.
\end{align*}
Therefore, $\frac{s}{3}S_{s}^{\frac{3}{2s}}<d<\frac{2s}{3}S_{s}^{\frac{3}{2s}}$. By Corollary \ref{und}, $d$ is the critical value of $I$. The proof is complete.

\section{\textbf{Acknowledgements}}
This work is partially supported by National Natural Science Foundation of China under the contract
No.11571269, China Postdoctoral Science Foundation Funded Project under contracts No.2015M572539 and No.2016T90899 and Shaanxi Province Postdoctoral Science Foundation Funded Project.

\end{document}